\newtheorem{theorem}{Theorem}[section]
\newtheorem{lemma}[theorem]{Lemma}
\newtheorem{corollary}[theorem]{Corollary}
\theoremstyle{definition}
\theoremstyle{theorem}\newtheorem{proposition}[theorem]{Proposition}
\newtheorem{problem}[theorem]{Problem}
\theoremstyle{definition}
\theoremstyle{definition}
\theoremstyle{definition}\newtheorem{remark}[theorem]{Remark}
\theoremstyle{definition}
\newcommand{\al}{\alpha}
\newcommand{\be}{\beta}
\newcommand{\Ga}{\Gamma}
\newcommand{\del}{\delta}
\newcommand{\Lam}{\Lambda}
\newcommand{\sig}{\sigma}
\newcommand{\vphi}{\varphi}
\newcommand{\cP}{\mathcal{P}}
\newcommand{\bR}{\mathbb{R}}
\newcommand{\bZ}{\mathbb{Z}}
\newcommand{\bQ}{\mathbb{Q}}
\newcommand{\bK}{\mathbb{K}}
\newcommand{\bS}{\mathbb{S}}
\newcommand{\SO}{\operatorname{SO}}
\newcommand{\defi}{\overset{\on{def}}{=}}
\newcommand\norm[1]{||#1||}
\newcommand\set[1]{\left\{#1\right\}}
\newcommand\pa[1]{\left(#1\right)}
\newcommand\av[1]{|#1|}
\newcommand\on[1]{\operatorname{#1}}
\newcommand\mb[1]{\mathbf{#1}}
\newcommand\tb[1]{\textbf{#1}}
\newcommand\mat[1]{\pa{\begin{matrix}#1\end{matrix}}}
\newcommand\smallmat[1]{\pa{\begin{smallmatrix}#1\end{smallmatrix}}}
\newcommand{\supp}{\on{supp}}
\newcommand{\SL}{\operatorname{\mathbf{SL}}}
\newcommand{\GL}{\operatorname{\mathbf{GL}}}
\newcommand{\PGL}{\operatorname{\mathbf{PGL}}}
\newcommand{\Lie}{\operatorname{Lie}}
\newcommand{\Ad}{\operatorname{Ad}}
\newcommand{\diag}{\operatorname{diag}}
\newcommand{\cov}{\operatorname{cov}}
\newcommand{\nubest}{\nu_{\tb{best}}}
\newcommand{\nuvbest}{\nu_{\tb{best}}^{\vec{\al}}}
\newcommand{\lra}{\longrightarrow}
\newcommand{\onto}{\xymatrix{\ar@{>>}[r]&}}
\newcommand{\slra}[1]{\stackrel{#1}{\lra}}
\begin{document}

\title[Translates of orbits and applications]{Translates of S-arithmetic periodic orbits and applications}

%\title[Translates of orbits and applications]{Translates of S-arithmetic periodic orbits and applications}

%
\author{Uri Shapira and Cheng Zheng}
\begin{abstract}
We prove that certain sequences of periodic orbits of the diagonal group in the space of lattices equidistribute. As an application we obtain
new information regarding the sequence of best approximations to certain vectors with algebraic coordinates. In order to prove these results
we generalize the seminal work of Eskin Mozes and Shah about the equidistribution of translates of periodic measures from the real 
case to the $S$-arithmetic case. 
\end{abstract}
%%
%% Authors emails, addresses and acknowledgments
\address{U.S.\ Department of Mathematics\
Technion \\
Haifa \\
Israel}
\email{ushapira@technion.ac.il}
\address{
 C.Z.\ School of Mathematical Sciences Shanghai Jiao Tong University \\
 Shanghai\\
 China}
 \email{zheng.c@sjtu.edu.cn}
\thanks{The authors acknowledge the support of ISF grants number 662/15 and 871/17, and the second author acknowledges the support at Technion by a Fine Fellowship. This work has received funding from the European Research Council (ERC) under the European Union's Horizon 2020 research and innovation programme (Grant Agreement No. 754475).}
\maketitle

\section{Introduction}\label{intr}
\subsection{Context}
Let $G$ be a locally compact second countable topological group and $\Ga<G$ a closed subgroup (usually assumed to be
discrete or even a lattice in $G$). 
The space $X = \Ga\backslash G$ endowed with
the quotient topology is then called a homogeneous space. The group $G$ and its various subgroups act on
$X$ from the left by $h\cdot \Ga g = \Ga gh^{-1}$.  
In homogeneous dynamics one studies these actions and tries to understand orbits. Given a closed subgroup $H<G$
the most fundamental question is to understand topological properties of $H$-orbits: Which orbits are closed, dense or bounded? What can be said about orbit closures in general? A fundamental role in the discussion is played by periodic 
probability measures. Let $\cP(X)$ denote the space of Borel probability measures on $X$ equipped with 
the weak$^*$ topology and note that the left action of $G$ on $X$ induces a continuous left action on $\cP(X)$ by 
$(g,\mu)\mapsto g\mu$ where for $f\in C_c(X)$, $\int_X f(x) d(g\mu) = \int_X f(g^{-1}x) d\mu$. 
An orbit $Hx\subset X$ is called periodic if there exists $\mu\in\cP(X)$ which is $H$-invariant (i.e. $h\mu=\mu$ for all
$h\in H$) and such that $\mu(Hx) =1$. The measure $\mu$ is unique in this case
and will be referred to
as the algebraic or periodic measure on the orbit.
It is denoted by 
$\mu_{Hx}$ or $\mu_{\Ga\backslash \Ga g H}$ if $x= \Ga g$. 

A significant portion of the literature in homogeneous dynamics is devoted to the following:
\begin{problem}\label{mainprob}
Understand the weak$^*$-accumulation points (limits)
 of periodic measures. That is, what can be said about possible 
limits of sequences $\mu_{H_n x_n}$ where $H_nx_n$ are periodic orbits. 
\end{problem}
\noindent Common phenomena related to Problem~\ref{mainprob} in particular cases 
are:
\begin{enumerate}
\item There are limit measures in $\cP(X)$  -- this will be referred to as non-escape of mass. 
\item All limit measures are periodic themselves -- this will be referred to as rigidity.
\item Often, when non-escape of mass and rigidity occurs, if one imposes some 
mild conditions on the sequence $H_nx_n$ (usually of algebraic nature), then
all limit measures must be $G$-invariant. Since there is a unique $G$-invariant measure on $X$, this actually implies that the sequence converges to it -- a phenomenon which we refer to as equidistribution.
\end{enumerate}
\noindent In the general setting described above Problem \ref{mainprob} might be too hard to tackle but under various technical assumptions
on the players $(G,\Ga, H_n, x_n)$ quite a bit is known. A standing assumption will be that $\Ga<G$ is a lattice. We mention
two seminal works in this direction:
\begin{itemize}
\item In \cite{MS95} Mozes and Shah solve this problem when $G, H_n$ are real Lie groups and the periodic 
measure $\mu_{H_n x_n}$ is ergodic under the action of a one-parameter unipotent subgroup of $H_n$. It 
is paramount for their consideration that $H_n$ will contain unipotent elements and thus when $H_n$ is a reductive
group - and in particular, a maximal torus - 
their results do not apply. 
%In \cite{GO11} Goridnik and Oh generalized \cite{MS95} to the $S$-arithmetic setting. 
 \item When $G, H$ are reductive algebraic groups defined over $\bQ$, $\Ga<G$ is 
an arithmetic lattice in $G$ and the centralizer of $H$ does not have any non-trivial $\bQ$-characters, 
then in the seminal papers \cite{EMS96, EMS97} Eskin, Mozes, and Shah analyze the possible
limit points of $\mu_{g_n Hx}$ of deformations of a fixed periodic $H$-orbit (note that $g_nHx = g_nH g_n^{-1} g_nx$ so this falls into
the above setting). 
\end{itemize}
In applications, it is often required to extend the above results to the $S$-arithmetic setting. The situation in the literature regarding \cite{MS95} is satisfactory. In \cite{GO11} Gorodnik and Oh generalize \cite{MS95} to the $S$-arithmetic setting.
In this paper, we are motivated by an application which require an $S$-arithmetic version of \cite{EMS96, EMS97}.
As far as we are aware of, there
is no published paper in the literature establishing such a result and so in order to pursue our application 
we take upon the endeavour to produce such a proof in a setting general enough for our application. We note that 
there is a paper on arxiv \cite{RZ16} in which an $S$-arithmetic generalization of \cite{EMS96, EMS97} is given.
The formalism there is quite sophisticated and we were not able to understand how to apply
the results from that paper nor to verify its correctness. Eventually we preferred to give a self contained treatment 
even on the expense of compromising brevity. 
\if
%%% Commented text below
We restrict attention to the case where $H$ is a maximal $\bR$-split torus in 
a semisimple $\bR$-split group $G$ (see \S\ref{sec:ldto}). Proving an $S$-arithmetic version of \cite{EMS96, EMS97} for more general $G,H$'s 
seem to be more challenging and require quite a bit more technicalities.
\fi
\subsection{Motivating example}\label{sec:main-example}
Let $X = \Ga\backslash G$ be the space of homothety classes of lattices in $\bR^d$. Here $G=\PGL_d(\bR)$, $\Ga = \PGL_d(\bZ)$. An element $g\in G$ will be represented by a matrix in $\GL_d(\bR)$ which by abuse of notation we 
also denote by $g$ and the coset $\Ga g$ represents the homothety class of the lattice spanned by the rows of\footnote{The alert reader will notice that this identification is not equivariant with respect to the left $G$-action on $X$ and the natural left action on the space of lattices. Under this identification if a coset $x = \Ga g$ represents a lattice $\Lam\subset \bR^d$, then $gx$ represents the lattice $g^*\Lam$, where $g^* = (g^{-1})^{\on{t}}$.} $g$, $g^{\on{t}}\bZ^d$. 
Let $A<G$ denote the (connected component of the identity of the) group of diagonal matrices. 

Let $\bK/\bQ$ be a totally real number field of degree $d$. Let $\sig_j:\bK\to \bR$ ($j=1,\dots,d$) be the distinct embeddings of $\bK$ into $\bR$
ordered in some way. 
Let $\vphi:\bK\to \bR^d$ be the map $\vphi = (\sig_1,\dots,\sig_d)$. 
For an ordered basis $\vec{\al} = (\al_1,\dots,\al_d)$ of $\bK/\bQ$ let 
\begin{equation}\label{eq:2301}
g_{\vec{\al} }=
\pa{\begin{smallmatrix}
-&\vphi(\al_1)&-\\ 
& \vdots & \\
-&\vphi(\al_d)&-
\end{smallmatrix}}
\end{equation}
%Let $\al$ be an algebraic number 
%generating a degree $n$ totally real field extension $\bQ(\al)$ of $\bQ$. Denote $\al_1,\dots,\al_n$ the $n$ real images of 
%$\al$ under the embeddings of $\bQ(\al)$ into $\bR$.
%Consider the Vandermonda matrix
%$$g_\al =
%\pa{\begin{smallmatrix}
%1&1&\dots&1\\ 
%\al_1& \al_2&\dots&\al_d\\
%\vdots& & & \vdots \\
%\al_1^{d-1}&\al_2^{d-1}&\dots &\al_d^{d-1}
%\end{smallmatrix}}$$
and let $x_{\vec{\al}} = \Ga g_{\vec{\al}}$. Then, it is well known (see e.g.\ \cite{LW01}) that the orbit $Ax_{\vec{\al}}$ is periodic. The motivating 
problem which started
this paper was to establish the following result which is proved in
\S\ref{sec:proofs-of-apps}.
%which follows easily from Theorem~\ref{th11} as explained
%in the proof of Theorem~\ref{nthm1} in \S\ref{sln}:
\begin{theorem}\label{th110}
Let $\bK/\bQ$ be a degree $d$ totally real number field with basis $\vec{\al} = (\al_1,\dots,\al_d)$. For any $m\in \bZ$ ($m\ne 0,\pm1$) and 
for any sequence $\mb{i}_n = (i_{n,1},\dots,i_{n,d})\in \bZ^d$ consider
the sequence 
$\vec{\al}_n \defi (m^{i_{n,1}}\al_1, m^{i_{n,2}}\al_2,\dots, m^{i_{n,d}}\al_d)$. Then, for any choice of elements $s_n\in G$ the sequence of periodic orbits $s_nAx_{\vec{\al}_n} = \Ga g_{\vec{\al}_n}As_n^{-1}$
equidistribute in $X$ as long for any $1\le j<r\le d$, $\lim_n \av{i_{n,j}-i_{n,r}} = \infty.$
\end{theorem}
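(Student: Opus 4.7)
\textbf{Proof plan for Theorem \ref{th110}.} The natural setting for the proof is $S$-arithmetic. Let $S=\{\infty\}\cup\{p\text{ prime}:p\mid m\}$, let $G_S=\prod_{v\in S}\PGL_d(\bQ_v)$, let $\Ga_S=\PGL_d(\bZ[1/m])$ embedded diagonally, and let $X_S=\Ga_S\backslash G_S$. Let $K_f=\prod_{p\in S\setminus\{\infty\}}\PGL_d(\bZ_p)$; then the natural projection $\pi\colon X_S\to X_S/K_f\cong X$ is continuous and sends Haar to Haar. Since $m$ is an $S$-unit, the scaling matrix $a_n=\diag(m^{i_{n,1}},\dots,m^{i_{n,d}})$ lies in $\Ga_S$.

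The central observation exploits this membership diagonally: in $X_S$ one has
\eq{\Ga_S(a_n g_{\vec{\al}},1,\dots,1)=\Ga_S(g_{\vec{\al}},a_n^{-1},\dots,a_n^{-1}),}
which trades scaling by $a_n$ at infinity for scaling by $a_n^{-1}$ at every finite place. Let $T\subset\PGL_d$ be the $\bQ$-anisotropic torus arising from the multiplication action of $\bK^\times$ on $\bK$ in the basis $\vec{\al}$. The $T(\bQ_S)$-orbit of $y_0:=\Ga_S(g_{\vec{\al}},1,\dots,1)$ is compact (by Dirichlet's $S$-unit theorem for $\bK$) and carries a unique invariant probability measure $\tilde\mu_0$. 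Setting $\tilde s_n:=(s_n,a_n^{-1},\dots,a_n^{-1})\in G_S$, the identity above identifies $s_n\mu_{Ax_{\vec{\al}_n}}$ as the $\pi_*$-image of $\tilde s_n\tilde\mu_0$. The crucial gain is that although the underlying periodic $A$-orbits in $X$ are \emph{not} obviously translates of one another (their $\bQ$-structure changes with $n$), the lifted measures $\tilde s_n\tilde\mu_0$ in $X_S$ are now honest translates of a single fixed periodic measure.

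I would then invoke the $S$-arithmetic generalization of Eskin--Mozes--Shah established in this paper, applied to $T$. Since $T$ is $\bQ$-anisotropic its centralizer is $T$ itself, which has trivial $\bQ$-character group, so the hypotheses of the main theorem are satisfied. That theorem yields $\tilde s_n\tilde\mu_0\to\mu_{X_S}$ provided $\tilde s_n$ escapes, in $G_S$, every bounded neighborhood of $N_{G_S}(L)$ attached to a proper reductive $\bQ$-subgroup $L$ with $T\subsetneq L\subsetneq\PGL_d$. Pushing forward by the continuous map $\pi_*$ then gives the desired equidistribution $s_n\mu_{Ax_{\vec{\al}_n}}\to\mu_X$.

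The main obstacle is verifying this escape condition, and it is the only place where the hypothesis on the indices enters. By Galois theory, proper reductive $\bQ$-subgroups of $\PGL_d$ properly containing $T$ correspond to nontrivial intermediate subfields $\bQ\subsetneq\bF\subsetneq\bK$, each giving rise to a subgroup of the form $\on{End}_\bF(\bK)^\times/\bG_m\subset\PGL_d$ realized via the basis $\vec{\al}$. The linearization technique of the $S$-arithmetic EMS theorem then translates non-escape from the tube around such an $L$ into the boundedness of certain norm-type functionals of the finite-place components $a_n^{-1}\in A(\bQ_p)$, equivalently into the boundedness of certain $\bZ$-linear combinations of the indices $i_{n,j}$ dictated by the partition of $\{1,\dots,d\}$ induced by the embeddings $\bF\hookrightarrow\bK$. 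Since the hypothesis $\lim_n|i_{n,j}-i_{n,r}|=\infty$ for every pair $1\le j<r\le d$ is the strongest possible form of divergence on the indices, it simultaneously defeats every nontrivial partition and hence every proper intermediate $\bF$, ruling out all obstructions to equidistribution.
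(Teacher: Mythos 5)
Your overall strategy is the same as the paper's: exploit that $a_n=\diag(m^{i_{n,1}},\dots,m^{i_{n,d}})$ is an $S$-unit to lift to $X_S=\Ga_S\backslash\PGL_d(\bQ_S)$, use the identity $\Ga_S(a_ng_{\vec\al},e_f)=\Ga_S(g_{\vec\al},a_n^{-1})$ to convert an archimedean scaling into a finite-place translate of a single fixed compact $\mathbf T(\bQ_S)$-orbit, and then invoke the paper's $S$-arithmetic analogue of Eskin--Mozes--Shah. This is precisely the mechanism the paper packages into Theorem~\ref{th11} and Lemma~\ref{l21}, and your ``central observation'' is the content of that lemma.

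Where you diverge, and where your plan has real gaps, is in verifying the genericity condition. Theorem~\ref{th11} reduces equidistribution to an explicit and easily checkable statement: for every non-central $x\in\mathbf T(\bQ)$, the sequence $h_nxh_n^{-1}$ must diverge in $\prod_{p\mid m}\PGL_d(\bQ_p)$. The paper then does a short, concrete computation: after passing to a subsequence one may assume $i_{n,1}<\dots<i_{n,d}$, and then the $(j,r)$ entry of $h_nxh_n^{-1}$ for $j<r$ is $m^{i_{n,j}-i_{n,r}}x_{jr}$, whose $p$-adic norm blows up for any $p\mid m$ as soon as $x_{jr}\ne0$; the remaining case of lower-triangular $x$ is ruled out by comparing first rows of $xg_{\vec\al}=g_{\vec\al}a$ and seeing that $a$, hence $x$, must be scalar. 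You instead appeal to (i) an unproved classification of all connected reductive $\bQ$-subgroups strictly between $T$ and $\PGL_d$ in terms of intermediate fields $\bQ\subsetneq\bF\subsetneq\bK$, and (ii) an unworked-out claim that the linearization technique converts trapping near $N_{G_S}(L)$ into boundedness of specific $\bZ$-linear combinations of the $i_{n,j}$, followed by the assertion that the pairwise-divergence hypothesis ``defeats every partition.'' Each of these is plausible, but none is verified, and the classification in (i) needs care (maximal-rank reductive subgroups of $\PGL_d$ are Levis, corresponding to Galois-stable partitions of the embeddings; the correspondence with intermediate fields requires an argument). You also justify $Z_{\mathbf G}(T)=T$ by $\bQ$-anisotropy of $T$; the correct reason is that $T$ is a maximal torus. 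The paper's route via Theorem~\ref{th11} avoids all of this and replaces your steps (i) and (ii) with a one-paragraph matrix computation, so you would do better to verify the divergence condition of Theorem~\ref{th11} directly rather than reconstructing the subgroup-classification machinery.
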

\begin{remark}
Theorem \ref{th110} attains an appealing form when one chooses the basis
$1,\be,\dots, \be^{d-1}$ for $\be\in \bK$ which generates the extension. In this case we write $x_\be$ for the corresponding lattice.
Given a generator $\al\in \bK$, we may choose in Theorem~\ref{th110} the sequence $\mb{i}_n = (0, n,2n,\dots, (d-1)n)$ 
and obtain that the sequence of periodic orbits $Ax_{m^n\al}$ equidistribute in $X$.
\end{remark}

Theorem~\ref{th110} falls into the setting of Problem~\ref{mainprob}. The sequence of orbits under consideration
is a deformation of a fixed orbit but this deformation is done in an $S$-arithmetic
extension of $X$ and then projected back to $X$. This is why the results of \cite{EMS96,EMS97} do not apply and 
one needs an $S$-arithmetic version of them. 
In the case $d=2$ (with $s_n=e$) this equidistribution was established in \cite{AS} using different techniques. There the authors 
used mixing (and hence obtain effective equidistribution). For $d\ge 3$ the strategy of \cite{AS} fails and one needs to appeal to 
the techniques of \cite{EMS96, EMS97} as we do here.

In \cite{AS} this equidistribution result was applied to deduce statistical information about the periods in the continued 
fraction expansions of quadratic numbers of the form $m^n\al$ as $n\to\infty$. In \S\ref{application} we apply Theorem~\ref{th110}
to obtain new results regarding the statistics of best approximations of vectors like $(m\al, m^2\al^2,\dots,m^{d-1}\al^{d-1})$ where $\bQ(\al)$
is a totally real number field of degree $d$ over $\bQ$.

%%%%%%%%%%%%%%%%%%%%%%%%%%%
\subsection{An application: Best approximations of algebraic vectors}\label{application}
In this subsection we wish to demonstrate the significance of Theorem \ref{th110} to the theory of Diophantine approximaiton. Readers whose 
interest lies solely in the dynamical aspects can skip this subsection altogether. 
We briefly recall some basic concepts from the theory of Diophantine approximation. Given an irrational vector $v\in \bR^{d-1}$ there is a natural correspondence between rational vectors $\frac{1}{q}\tb{p}$ (here $q\in\bZ, \tb{p}\in\bZ^{d-1}$ and $\gcd(\tb{p},q) =1$) close to $v$ and primitive integral 
vectors $\smallmat{\tb{p} \\ q}\in\bZ^d$ approximating the line $\bR\cdot \smallmat{v\\1}$. One defines 
the sequence of best approximations to $v$ to be a sequence of primitive integral vectors $\smallmat{\tb{p}_k\\q_k}\in \bZ^d$ 
defined recursively as follows:  $q_1 = 1$ and $\tb{p}_1$  
is (one of the potentially finitely many) integral vectors
satisfying $\norm{\tb{p}_1 -v} = \min_{\tb{p}\in\bZ^{d-1}}\norm{\tb{p}-v}$. For $k >1$, $q_k$ is the minimal integer $q$
for which 
$$ \min\set{\norm{q v -\tb{p}}: \tb{p}\in\bZ^{d-1}} < \norm{q_{k-1}v - \tb{p}_{k-1}}$$
and $\tb{p}_k$ is then chosen so that 
$$\norm{q_kv -\tb{p}_k} =\min\set{\norm{q_k v -\tb{p}}: \tb{p}\in\bZ^{d-1}}.$$
A more geometric way to define (and understand) the vectors $\smallmat{\mb{p}_k\\q_k}$ is as follows: One travels along the ray 
$\set{t\cdot\smallmat{v\\1}:t\ge 1}$ and records the closest integral vectors one sees along the way; that is, one records an integral vector
at a certain (integral) time if its distance to the ray beats the distances recorded thus far. In this description the distance one uses
on $\bR^d$ is not the euclidean one but $\on{d}(\smallmat{w_1\\r_1}, \smallmat{w_2\\r_2}) = \max\set{\norm{w_1-w_2},\av{r_1-r_2}}$ where
$w_i\in\bR^{d-1}$ and $r_i\in\bR$ and the norm on $\bR^{d-1}$ is the euclidean norm (in fact, one can choose any norm on $\bR^{d-1}$ which is 
of interest).  This way, each irrational vector $v\in\bR^{d-1}$ yields a sequence of primitive integral vectors $\smallmat{\mb{p}_k\\q_k}$ which 
is referred to as the sequence of best approximations to the ray $\bR\cdot \smallmat{v\\1}$ or by slight abuse of language, to $v$.

The sequence of best approximations is a central object of study in Diophantine approximation. Recently, in \cite{SWbest}, 
results pertaining to statistical information regarding these sequences in two cases were established: For Lebesgue almost any $v$
some information about the behaviour of $\smallmat{\mb{p}_k\\q_k}$ was established and it was shown that they obey certain 
universal laws. In addition, for certain vectors $v$ with algebraic coordinates it was shown that the sequence 
$\smallmat{\mb{p}_k\\q_k}$ obeys statistical laws which are different from the universal ones. As an application of Theorem~\ref{th110} we
show that if $\bQ(\al)$ is a totally real field of degree $d$ over $\bQ$ and if $m\ne 0,\pm{1}$ is an integer then 
the statistical laws corresponding to $v_n = (m^n\al,m^{2n}\al^2,\dots,m^{(d-1)n}\al^{d-1})$ approach the universal statistical law
as $n\to\infty$. To be more precise we need to define the objects pertaining the random 
variables which obey these statistical laws. 

To each best approximation vector $\smallmat{\tb{p}_k\\q_k}$ we associate a triple 
$$(\Lam_k, w_k, \smallmat{\tb{p}_k\\q_k})\in X_{d-1}\times \bR^{d-1}\times \hat{\bZ}^d$$ 
where $X_{d-1}$ is the space of lattices of covolume 1 in $\bR^{d-1}$, 
and $\hat{\bZ} = \prod_p \bZ_p$ is the pro-finite completion of $\bZ$. 
This correspondence is defined in the following way: The lattice $\Lam_k$ is referred to as the \textit{directional lattice}
of $\bZ^d$ in direction of the $k$'th best approximation $\smallmat{\tb{p}_k\\q_k}$. It is defined by
\begin{align*}
\Lam_k & \defi q_k^{1/(d-1)}\pi^{\smallmat{\tb{p}_k\\q_k}}_{\bR^{d-1}}(\bZ^n),
\end{align*}
where $ \pi^{w}_{\bR^{d-1}}$ is the linear projection of $\bR^d$ onto the hyperplane $\set{\smallmat{u\\ 0}: u\in\bR^{d-1}}$ with kernel equal to $\bR\cdot w$. Thus, $\Lam_k$ is the unimodular lattice of the horizontal hyperplane $\bR^{d-1}\times\set{0}$ which records how $\bZ^d$ looks like from the direction of the $k$'th best approximation to $v$. 

The vector $w_k\in\bR^{d-1}$ is referred to as the \textit{displacement vector} of the $k$'th best approximation $\smallmat{\tb{p}_k\\q_k}$.
It is defined by
\begin{align*}
w_k &\defi q_k^{-1/(d-1)}(q_kv-\mb{p}_k).
\end{align*}
This vector tells us two things: First, its direction $w_k/\norm{w_k}$ on the unit sphere $\bS^{d-2}$ is the 
direction from which the $k$'th best approximation approaches the line $\bR\cdot\smallmat{v\\1}$ and second, its length
$\norm{w_k}$ captures (the appropriate scaling of the) quality of approximation. It turns out that the normalization factor of $q_k^{-1/(d-1)}$ is the correct one for an interesting limit law to occur. 

Finally, in the pro-finite coordinate $\hat{\bZ}^d$ we simply take the integral vector $\smallmat{\tb{p}_k\\q_k}$ as it is (embedded diagonally in the product). This coordinate captures congruence questions regarding the $k$'th best approximation.

The whereabouts of the triple $(\Lam_k, w_k, \smallmat{\tb{p}_k\\q_k})$
in the product space $X_{d-1}\times \bR^{d-1}\times \hat{\bZ}^d$ and the statistical properties of the sequence of triples as $k$ changes is
very interesting from the perspective of Diophantine approximation. 
In \cite{SWbest} the following theorem is proved:
\begin{theorem}[\cite{SWbest}]\label{thm:SWbest}
Let $d\ge 2$ and let $\norm{\cdot}$ be either the Euclidean norm or the sup norm on $\bR^{d-1}$. 
For an irrational vector $v\in\bR^{d-1}$ let $\smallmat{\tb{p}_k(v)\\ q_k(v)} = \smallmat{\tb{p}_k\\ q_k} $ be the sequence of best
approximations of $v$. Then 
\begin{enumerate}
\item 
There exists a probability measure  $\nu_{\tb{best}}\in \cP(X_{d-1}\times \bR^{d-1}\times \hat{\bZ}^d)$
such that for Lebesgue almost any $v$ 
$$\lim_K \frac{1}{K}\sum_{k=1}^K\del_{(\Lam_k,w_k, \smallmat{\tb{p}_k\\ q_k})} = \nu_{\tb{best}}.$$ 
\item\label{part2} If $\vec{\al} = (\al_1,\dots,\al_{d-1},1)^{\on{t}}$ is a vector whose coordinates span a totally real number field of degree $d$ over $\bQ$ then
there exists a probability measure  $\nu_{\tb{best}}^{\vec{\al}}\in \cP(X_{d-1}\times \bR^{d-1}\times \hat{\bZ}^d)$ such that 
$$\lim_K \frac{1}{K}\sum_{k=1}^K\del_{(\Lam_k,w_k, \smallmat{\tb{p}_k\\ q_k})} = \nu_{\tb{best}}^{\vec{\al}}.$$  
%\item If $\vec{\al}_n$ is a sequence of vectors as in~\eqref{part2} such that the sequence of periodic 
%orbits $Ax_{\vec{\al}_n}$ equidistributes in $X$ (notation being as in \S\ref{sec:main-example}), then $\nu_{\on{best}}^{\vec{\al}%_n}\to \nu_{\on{best}}$ 
\end{enumerate}
\end{theorem}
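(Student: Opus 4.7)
The plan is to use the Dani correspondence to translate the statistical problem about best approximations into an equidistribution problem for a diagonal flow on an $S$-arithmetic extension of $X_d$, and then to apply an Ambrose--Kakutani style first-return argument. For $v\in\bR^{d-1}$ set $g_v=\smallmat{I_{d-1}& v\\ 0& 1}$ and $a_t=\diag(e^{t/(d-1)},\dots,e^{t/(d-1)},e^{-t})$. A direct calculation shows that the best approximations $\smallmat{\tb{p}_k\\ q_k}$ of $v$ correspond bijectively to the times $t_k$ at which the lattice $a_tg_v\bZ^d$ acquires a new primitive vector in a prescribed cone around $\pm e_d$ of length $\le 1$; moreover the triple $(\Lam_k,w_k,\smallmat{\tb{p}_k\\ q_k})$ can be read off, after a canonical change of coordinates, from the configuration $a_{t_k}g_v\bZ^d$ together with the primitive vector's class in $\hat\bZ^d$. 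To simultaneously track the pro-finite data I would work in the enhanced space $Y=\SL_d(\bZ)\backslash(\SL_d(\bR)\times \SL_d(\hat\bZ))$ and define a Poincar\'e section $\Sig\subset Y$ consisting of enhanced lattices whose shortest nonzero vector lies in a fixed compact symmetric cone $C\subset\bR^d$ around $\pm e_d$ and has length $\le 1$. One checks that $\Sig$ has finite Haar measure, that $\partial\Sig$ is null, and that the induced probability on $\Sig$, pushed forward via the reading-off map, is precisely the measure $\nubest\in\cP(X_{d-1}\times\bR^{d-1}\times\hat\bZ^d)$.

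For part (1) I would invoke that $a_t$ is mixing on $Y$ with respect to Haar probability, and that the full unstable horospherical subgroup $U=\set{g_v:v\in\bR^{d-1}}$ is transversal to $a_t$; a standard thickening/Fubini argument then gives that for Lebesgue-a.e. $v$ the orbit $\set{a_tg_v\cdot[\bZ^d]:t\ge 0}$ equidistributes in $Y$. The Ambrose--Kakutani first-return theorem converts this into equidistribution of the return hits $a_{t_k}g_v\cdot[\bZ^d]\in\Sig$ with respect to the induced probability, which is precisely the claim of part (1). For part (2), if $v=(\al_1,\dots,\al_{d-1})$ comes from a totally real degree $d$ field then by Dirichlet's unit theorem $Ax_{\vec\al}$ is periodic in $X_d$, and its canonical lift to $Y$ is a periodic orbit of an enhanced diagonal subgroup (the pro-finite factor being determined by the integrality properties of $\vec\al$). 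Equidistribution of long $a_t$-pieces along this periodic lift, with respect to its algebraic periodic measure, combined with the same cross-section machinery, produces the algebraic limit law $\nuvbest$.

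The main obstacle is controlling the cross-section and the cusp simultaneously: one must show that $\partial \Sig$ is null for the relevant limit measures, that the roof function over $\Sig$ is integrable, and that the empirical return averages $\tfrac1K\sum_{k=1}^K\del_{(\Lam_k,w_k,\smallmat{\tb{p}_k\\ q_k})}$ are asymptotic to the time averages $\tfrac1{t_K}\int_0^{t_K}\del_{a_tg_v\cdot[\bZ^d]}\,dt$. For part (1) this requires quantitative non-escape of mass (via Borel--Cantelli applied to the Kleinbock--Margulis logarithm law), ensuring that return times $t_k$ grow linearly in $k$ almost surely. For part (2) it requires controlling cusp excursions along the fixed periodic $A$-orbit, which follows from the geometry of units in $\bK$. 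A secondary but delicate point in part (2) is that the projection of the periodic measure on the $S$-arithmetic lift of $Ax_{\vec\al}$ onto the $\hat\bZ^d$-coordinate is typically \emph{not} Haar but rather a measure reflecting the congruence structure of the order $\bZ[\vec\al]\subset\bK$; verifying that this projection is well-defined and computing its form is arguably the subtlest ingredient in the proof.
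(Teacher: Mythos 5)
The paper does not prove this theorem: it is imported verbatim from \cite{SWbest} (note the citation in the theorem header) and used as a black box in \S\ref{sec:proofs-of-apps}, so there is no in-paper proof to compare against. Your sketch nonetheless identifies the correct overall strategy of \cite{SWbest}: encode best approximations as returns of the diagonal flow $a_t$ along $t\mapsto a_tg_v\bZ^d$ to a Poincar\'e cross-section inside an adelic/$S$-arithmetic extension of the space of lattices, read the triple $(\Lam_k,w_k,\smallmat{\tb{p}_k\\ q_k})$ off the hitting configuration, and convert equidistribution of the flow into equidistribution of the hits via a first-return argument.

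Two points, however, are genuine gaps rather than omitted routine steps. First, the two descriptions you give of the section are not equivalent, and the second one is wrong: the set of enhanced lattices whose shortest vector lies in a fixed cone of length $\le 1$ does \emph{not} detect best approximations, because a best approximation is defined by a record-breaking (minimality) condition relative to all smaller denominators, not merely by the presence of a short vector near $\pm e_d$. Your earlier phrase (``acquires a new primitive vector'') is closer to what is needed, but pinning down a section whose hitting times are \emph{exactly} the $q_k$'s, whose boundary is null for both the Haar measure and the periodic measure, and whose roof function is integrable, is the main technical content of the construction and cannot be waved through with ``one checks.'' Second, in part~(2) the trajectory $a_tg_v\bZ^d$ does not lie on the compact $A$-orbit $Ax_{\vec\al}$; it is only forward-asymptotic to it. To conclude you must (a) establish unique ergodicity of the $a_t$-flow on the compact $A$-orbit and on its $\hat\bZ$-enhanced lift, which is a Kronecker-type statement about the unit lattice that is plausible but not automatic in the adelic factor, and (b) show that the exponential approach to the periodic orbit is compatible with the cross-section machinery so that the return statistics of the asymptotic orbit coincide with those of the periodic one. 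Your proposal acknowledges the second family of issues in its final paragraph but does not address the asymptoticity step at all, and silently treats $g_v\bZ^d$ as if it already lay on the periodic leaf.
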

The following theorem will be proved in \S\ref{sec:proofs-of-apps} as an application of Theorem~\ref{th110} together with a result from \cite{SWbest}.
\begin{theorem}\label{thm853}
Let $ (\al_1,\dots,\al_{d-1},1)^{\on{t}}$ be a vector whose coordinates span a totally real number field of degree $d$ over $\bQ$.
Let $\mb{i}_n = (i_{n,1},\dots,i_{n,d-1})\in \bZ^{d-1}$ be such that for any $j\ne r\in\set{1,\dots,d-1}$ both $i_{n,r}\to_n\infty$ and $\av{i_{n,r}-i_{n,j}}\to_n\infty$. 
Let $m\ne 0,\pm1$ be an
integer and  and let $$\vec{\al}_n \defi (m^{i_{n,1}}\al_1, m^{i_{n,2} }\al_2,\dots, m^{i_{n,d-1}}\al_{d-1},1).$$ 
Then, in the notation of Theorem~\ref{thm:SWbest}, 
$\nu_{\tb{best}}^{\vec{\al}_n}\slra{n}\nu_{\tb{best}}$.
\end{theorem}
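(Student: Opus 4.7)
The plan is to use the cross-section viewpoint developed in \cite{SWbest}, according to which both $\nubest$ and $\nuvbest$ arise as pushforwards under a common map $\Phi$ of natural measures on an auxiliary homogeneous space. More precisely, the best approximations to an irrational $v$ correspond to returns of the diagonal $A$-flow to a Poincar\'e section $\cS$ in an appropriate extension of the space of unimodular lattices by $\hat{\bZ}^d$, and the triples $(\Lam_k, w_k, \smallmat{\tb{p}_k\\q_k})$ are the images under $\Phi$ of these returns. Consequently $\nubest$ is $\Phi_*$ of the cross-section measure induced by the Haar measure on $X\times \hat{\bZ}^d$, while $\nuvbest$ is $\Phi_*$ of the analogous cross-section measure induced by $\mu_{Ax_{\vec\al}}\times m_{\hat{\bZ}^d}$, where $m_{\hat{\bZ}^d}$ is the Haar probability on $\hat{\bZ}^d$. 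Reducing Theorem~\ref{thm853} to a weak-$*$ convergence statement for these underlying $A$-invariant measures is the heart of the matter.

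The main dynamical input is Theorem~\ref{th110}, which I would apply directly to the lattices $x_{\vec\al_n}$ (with the trivial choice $s_n=e$) to conclude that the periodic orbits $Ax_{\vec\al_n}$ equidistribute in $X$ under the hypothesis $\av{i_{n,r}-i_{n,j}}\to\infty$ for $r\ne j$. To incorporate the $\hat{\bZ}^d$ factor one needs to enhance this equidistribution to the level of $X\times \hat{\bZ}^d$. The natural way is to work adelically from the outset, carrying along the canonical $\hat{\bZ}^d$-decoration of $x_{\vec\al_n}$, and to observe that the deformation $\vec\al_n=(m^{i_{n,1}}\al_1,\dots,m^{i_{n,d-1}}\al_{d-1},1)$ is produced by translation by the rational matrix $\diag(m^{i_{n,1}},\dots,m^{i_{n,d-1}},1)$; under the diagonal embedding into the adeles, this translation simultaneously drives the $m$-adic coordinate to infinity at every place, courtesy of the assumption $i_{n,r}\to\infty$. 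The resulting joint equidistribution is of the same flavor as Theorem~\ref{th110} and should be a direct consequence of the general $S$-arithmetic EMS-type statement proved in the body of the paper.

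It then remains to transfer the weak-$*$ convergence of the $A$-invariant measures on the ambient space to weak-$*$ convergence of their $\Phi$-pushforwards, and this is precisely the step where one appeals to the result from \cite{SWbest} alluded to in the statement. Its content is that the cross-section $\cS$ is tame: its boundary is null with respect to any weak-$*$ accumulation point of the sequence, and the return time is sufficiently integrable that no cross-section mass escapes to infinity. The most delicate point, and the one I expect to be the main obstacle, is precisely this absence of escape of mass along the cross-section; i.e.\ the uniform tightness of the cross-section measures attached to $\mu_{Ax_{\vec\al_n}}\times m_{\hat{\bZ}^d}$. I would establish this by combining the uniform integrability estimates on the cross-section return time from \cite{SWbest} with the $A$-equidistribution furnished by Theorem~\ref{th110}, and then invoking a Portmanteau-type argument to conclude $\Phi_*$-continuity on the relevant family of measures, yielding the desired convergence $\nuvbest[\vec\al_n]\to\nubest$.
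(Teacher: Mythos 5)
The paper's proof is two lines: Theorem~\ref{th110} supplies equidistribution of $s_nAx_{\vec{\al}_n}$ for \emph{every} choice of real matrices $s_n$, and \cite[Theorem 14.2]{SWbest} is then invoked as a black box that converts exactly this kind of input into the conclusion $\nuvbest[\vec{\al}_n]\to\nubest$. Crucially, the paper's last sentence stresses that \cite[Theorem 14.2]{SWbest} needs the equidistribution for a \emph{specific, non-trivial} sequence $s_n$ (one tied to the correspondence between the lattice $x_{\vec{\al}_n}$ obtained from the Galois embeddings and the unipotent lattice $u_{v_n}\bZ^d$ that actually drives the best-approximation dynamics for $v_n$), and that the arbitrary-$s_n$ formulation of Theorem~\ref{th110} was designed precisely so that one need not compute what this $s_n$ is.

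Your proposal, while in the right spirit (the cross-section picture you describe is indeed what underlies \cite{SWbest}), has a concrete gap at exactly this point: you apply Theorem~\ref{th110} ``with the trivial choice $s_n=e$''. Equidistribution of $Ax_{\vec{\al}_n}$ is not the same as equidistribution of the $A$-orbit closures of the lattices $u_{v_n}\bZ^d$ whose cross-section statistics you want to control; those closures are translates $s_nAx_{\vec{\al}_n}$ with non-trivial, $n$-dependent $s_n$, and the whole purpose of the $s_n$-parameter in Theorem~\ref{th110} is to absorb them. Your proposed adelic enhancement also does not rescue this, since it only addresses the $\hat{\bZ}^d$-decoration and the $p$-adic coordinates, not the real translate. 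Beyond the $s_n$ issue, the hard analytic step you flag yourself — no escape of mass along the cross-section, tameness of the section boundary, and the Portmanteau transfer — is stated only as a program (``I would establish this by combining\ldots''), whereas the paper sidesteps it entirely by quoting \cite[Theorem 14.2]{SWbest}. So the proposal identifies the correct mechanism but does not constitute a proof, and its one concrete commitment ($s_n=e$) is wrong.
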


\begin{remark}
Quite a bit is known about the measures $\nu_{\tb{best}}, \nu_{\tb{best}}^{\vec{\al}}$ appearing in Theorem~\ref{thm:SWbest} and in particular, about how different from each other they are. 
For example, it is known that 
\begin{enumerate}[(i)]
\item The projection of $\nubest$ to $X_{d-1}$ is (not equal to but) in the same measure class as the periodic $\SL_{d-1}(\bR)$-invariant measure, while
the projection of $\nuvbest$ is singular to it.
\item The projection of $\nubest$ to $\bR^{d-1}$ is a compactly supported absolutely continuous measure with respect to Lebesgue measure and if the norm
used to define the best approximations is the Euclidean one then it is $\SO_{d-1}(\bR)$-invariant. On the other hand $\nuvbest$ is singular to Lebesgue and 
is not $\SO_{d-1}(\bR)$-invariant.
\item The projection of $\nubest$ to $\hat{\bZ}^d$ is the normalized restriction of the Haar measure to the compact open set obtained as the closure
of $\bZ^d_{\on{prim}}$ embedded diagonally in $\hat{\bZ}^d$, while the projection of $\nuvbest$ is singular to it. 
\end{enumerate}
These differences highlight the significance of Theorem~\ref{thm853}.
\end{remark}

\subsection{Limiting distributions of periodic orbits.}\label{sec:ldto}
We now state our main  results
Theorems~\ref{th12},~\ref{th11} and fix the notation of the paper. Let $\mathbf G$ be a connected (with respect to the Zariski topology) algebraic group defined over $\mathbb Q$. Let $S$ be a finite set of valuations on $\mathbb Q$ which contains the Archimedean one, and denote by $S_f$ the subset of non-Archimedean places in $S$. Let $\mathbb Q_S=\prod_{p\in S}\mathbb Q_p$ and $\mathcal O_S=\mathbb Z[1/p:p\in S_f]$. Denote by $$\mathbf G(\mathbb Q_S)=\prod_{p\in S}\mathbf G(\mathbb Q_p),\quad\Gamma_S=\mathbf G(\mathcal O_S),\quad\Gamma_\infty=\mathbf G(\mathbb Z).$$ For an algebraic group $\mathbf F$, we write $\mathbf F^0$ for the connected component of $\mathbf F$ in the Zariski topology, and write $\mathbf F(\mathbb R)^0$ for the connected component of $\mathbf F(\mathbb R)$ as a Lie group. In this paper, we prove the following theorem.

\begin{theorem}\label{th12}
Let $\mathbf G$ be a connected reductive algebraic group defined over $\mathbb Q$ without nontrivial $\mathbb Q$-characters. Let $\mathbf H$ be a connected reductive $\mathbb Q$-subgroup of $\mathbf G$ without nontrivial $\mathbb Q$-characters, and suppose that $\mathbf H$ contains a maximal torus of $\mathbf G$. Let $H$ be a subgroup of finite index in $\mathbf H(\mathbb Q_S)$ and $\mu_{\Gamma_S\backslash\Gamma_SH}$ the natural invariant probability measure supported on $\Gamma_S\backslash\Gamma_SH$. Let $\{g_i\}_{i\in\mathbb N}$ be a sequence in $\mathbf G(\mathbb Q_S)$. Then the sequence $g_i^*\mu_{\Gamma_S\backslash\Gamma_SH}$ has a subsequence converging to an algebraic probability measure $\mu$ on $\Gamma_S\backslash\mathbf G(\mathbb Q_S)$.
\end{theorem}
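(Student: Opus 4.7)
The plan is to adapt the strategy of Eskin--Mozes--Shah to the $S$-arithmetic setting, combining non-divergence, generation of unipotent invariance in the limit, and an $S$-arithmetic measure classification with linearization.

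\textbf{Step 1 (Non-divergence).} First I would establish that the family $\{g_i^{*}\mu_{\Gamma_S\backslash\Gamma_S H}\}_{i\in\mathbb N}$ is tight in $\cP(\Gamma_S\backslash\mathbf G(\mathbb Q_S))$, so that by Prokhorov's theorem every subsequence has a further weak-$*$ convergent sub-subsequence, and the limit is automatically a probability measure. The right tool is the $S$-arithmetic non-divergence theorem of Kleinbock--Tomanov (the analog of Dani--Margulis). Its hypotheses are met because (i) $\mathbf H$ is reductive without nontrivial $\mathbb Q$-characters, so $\mu_{\Gamma_S\backslash\Gamma_S H}$ is a probability measure to begin with, and (ii) $\mathbf H$ contains a maximal torus of $\mathbf G$, hence the root directions in $\mathbf G$ relative to that torus give enough unipotent subgroups (in $\mathbf G$, not in $\mathbf H$) whose orbits generate the $(C,\alpha)$-good estimates on the polynomial functions appearing in the non-divergence criterion.

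\textbf{Step 2 (Acquiring unipotent invariance).} After passing to a weak-$*$ convergent subsequence with limit $\mu\in\cP(\Gamma_S\backslash\mathbf G(\mathbb Q_S))$, I would show that $\mu$ is invariant under a nontrivial $\mathbb Q_S$-unipotent one-parameter subgroup. The dichotomy is the usual one: either the sequence $\{g_i\}$ is bounded modulo $N_{\mathbf G(\mathbb Q_S)}(\mathbf H(\mathbb Q_S))$, in which case a subsequence converges and $\mu$ is just a translate of $\mu_{\Gamma_S\backslash\Gamma_S H}$ (hence algebraic); or else the conjugates $g_iHg_i^{-1}$ escape to infinity, and by the structure theory of $\mathbf H$ (which contains a maximal torus of $\mathbf G$, so conjugating elements of the maximal torus by $g_i$ produces elements moving along root subgroups) one extracts, after renormalization, a nontrivial unipotent one-parameter subgroup of $\mathbf G(\mathbb Q_S)$ leaving $\mu$ invariant.

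\textbf{Step 3 (Classification and algebraicity).} With unipotent invariance in hand I would invoke the $S$-arithmetic measure classification of Margulis--Tomanov and Ratner: $\mu$ is a convex combination of algebraic measures, each supported on a periodic orbit of some closed $\mathbb Q_S$-subgroup. To upgrade this to the assertion that $\mu$ itself is algebraic, I would apply the $S$-arithmetic version of the Dani--Margulis linearization technique. Concretely, one embeds $\mathbf G$ in an auxiliary representation so that the closed subgroups appearing in the ergodic decomposition correspond to concrete orbits of vectors; the non-divergence and uniform continuity estimates from Step 1 applied near these linearized loci then force all but a negligible portion of the mass of $\mu$ to concentrate on a single periodic orbit, giving algebraicity of the limit.

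\textbf{Main obstacle.} The principal technical difficulty will be making the linearization step in Step 3 work uniformly over all places in $S$: the Dani--Margulis machinery was developed over $\mathbb R$, and in the $S$-arithmetic setting one needs compatible $(C,\alpha)$-good function estimates with respect to non-Archimedean norms, careful tracking of the variety of subgroups across all $\mathbb Q_p$ for $p\in S_f$, and a verification that the exceptional loci remain negligible after the $g_i$-translation. Achieving this self-contained treatment---rather than black-boxing \cite{RZ16}---is exactly the step that motivates writing this paper.
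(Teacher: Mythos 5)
Your three-step outline—non-divergence via an $S$-arithmetic Kleinbock--Tomanov criterion, then unipotent invariance of the limit, then Margulis--Tomanov/Ratner plus linearization following Gorodnik--Oh—is the same architecture the paper follows. However, Step 2 as written contains a genuine gap, and you have misidentified where the main new difficulty lies.

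In Step 2 you write that when the conjugates $g_i H g_i^{-1}$ escape to infinity, ``one extracts, after renormalization, a nontrivial unipotent one-parameter subgroup of $\mathbf G(\mathbb Q_S)$ leaving $\mu$ invariant.'' This is exactly the Eskin--Mozes--Shah argument over $\mathbb R$, where renormalizing a diverging nilpotent direction in the Lie algebra produces a limiting nilpotent element, and invariance under small unipotents trivially upgrades to invariance under the whole one-parameter group because $u_t u_s = u_{t+s}$ and $\mathbb R$ is connected. Over $\mathbb Q_p$ this breaks down: if the invariance you extract is only under a sequence $h_i\to e$ whose conjugates $\gamma_i h_i\gamma_i^{-1}$ converge to a unipotent element $u$, you a priori only get invariance under the compact group $\overline{\langle u\rangle}$, which is a proper, relatively compact subgroup of the full one-parameter group $\{\exp(tX):t\in\mathbb Q_p\}$. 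Invariance under a compact unipotent group is useless for the Margulis--Tomanov/Ratner classification, which requires a noncompact unipotent flow. The paper's entire \S\ref{gp} is devoted to this: one needs to show invariance under \emph{arbitrarily large} unipotent elements, which is obtained via a quantitative intermediate-value-type argument for $(C,\alpha)$-good functions over $\mathbb Q_p$ (Proposition~\ref{p33} and Corollary~\ref{c31}, applied in Lemma~\ref{l51}), followed by Lemma~\ref{l52} showing that a closed subgroup containing arbitrarily large unipotents contains a full one-parameter unipotent subgroup. Your proposal does not anticipate that invariance under small unipotents is not enough, and the renormalization argument you cite does not by itself produce the large elements.

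Relatedly, you flag the $S$-arithmetic linearization in Step 3 as ``the principal technical difficulty.'' That machinery exists essentially in the needed form in Gorodnik--Oh (and Margulis--Tomanov, Tomanov), and the paper largely imports it. The genuinely new obstruction relative to \cite{EMS96,EMS97} is the one above in Step 2, which the paper's introduction explicitly singles out (``This is due to the existence of compact unipotent groups''). A secondary note: you phrase the bounded case of your dichotomy in terms of $g_i$ being bounded modulo $N_{\mathbf G(\mathbb Q_S)}(\mathbf H(\mathbb Q_S))$; the paper's Propositions~\ref{p54},~\ref{p55} formulate it in terms of the centralizer $Z_{\mathbf G}(\mathbf H)$. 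Since $Z_{\mathbf G}(\mathbf H)\subset\mathbf H$ here (as $\mathbf H$ contains a maximal torus), ``bounded mod $N_{\mathbf G}(\mathbf H)$'' is weaker than ``bounded mod $Z_{\mathbf G}(\mathbf H)$'' and does not by itself guarantee that $\Ad(g_i)H$ stays bounded; you would need to redo this reduction using the centralizer, as the paper does, after first reducing (via the $\Gamma_S$-translation trick at the start of \S\ref{mr}) to $g_i\in\Gamma_S$.
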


As a corollary of Theorem \ref{th12}, we deduce the following theorem regarding the equidistribution of certain arithmetically related periodic orbits in a 
a real homogeneous space, which we will need to prove Theorem~\ref{th110}.

\begin{theorem}\label{th11}
Let $\mathbf G$ be a connected $\mathbb R$-split reductive $\mathbb Q$-group and $\mathbf T$ a maximal $\mathbb R$-split torus defined over $\mathbb Q$ in $\mathbf G$ which is $\mathbb Q$-anisotropic. Let $S$ be a finite set of valuations on $\mathbb Q$ containing the archimedean one, $\{g_i\}$ a sequence in $\mathbf G(\mathbb R)$ and $\{h_i\}$ a sequence in $\Gamma_S$. Then the set of weak$^*$ limits of the sequence $\mu_{\Gamma_\infty h_i\mathbf T(\mathbb R)^0g_i^{-1}}$ is not empty, and contains only algebraic probability measures on $\Gamma_\infty\backslash\mathbf G(\mathbb R)$.

Moreover, if for any non-central element $x\in\mathbf T(\mathbb Q)$, the sequence $(g_ixg_i^{-1},h_ixh_i^{-1})$ diverges in $\mathbf G(\mathbb Q_S)$, then $\mu_{\Gamma_\infty h_i\mathbf T(\mathbb R)^0g_i^{-1}}$ converges to some translate of $\mu_{\Gamma_\infty\backslash\mathbf G(\mathbb R)^0}$.
\end{theorem}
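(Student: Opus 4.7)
The plan is to deduce Theorem~\ref{th11} from Theorem~\ref{th12} by lifting the real periodic measures to the $S$-arithmetic quotient $X_S:=\Gamma_S\backslash\mathbf G(\mathbb Q_S)$, invoking Theorem~\ref{th12} there, and projecting the resulting convergent subsequence back to $\Gamma_\infty\backslash\mathbf G(\mathbb R)$. Set $\mathbf H:=\mathbf T$ and $H:=\mathbf T(\mathbb R)^0\times\prod_{p\in S_f}\mathbf T(\mathbb Q_p)$, a finite index subgroup of $\mathbf T(\mathbb Q_S)$. Since $\mathbf T$ is $\mathbb Q$-anisotropic it carries no nontrivial $\mathbb Q$-characters, and since $\mathbf T$ is a maximal torus of $\mathbf G$ neither does $\mathbf G$, so the hypotheses of Theorem~\ref{th12} are satisfied. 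Define $\tilde g_i\in\mathbf G(\mathbb Q_S)$ to be the element whose archimedean coordinate is $g_i$ and whose non-archimedean coordinates are all equal to $h_i$; writing $[h_i]$ for the diagonal embedding of $h_i\in\Gamma_S$ in $\mathbf G(\mathbb Q_S)$, we have the factorization $\tilde g_i=(g_ih_i^{-1},e,\dots,e)\cdot[h_i]$ and
\[
\tilde g_i\,[x]\,\tilde g_i^{-1}=\bigl(g_ixg_i^{-1},\,h_ixh_i^{-1},\dots,h_ixh_i^{-1}\bigr)\quad\text{for every }x\in\mathbf T(\mathbb Q),
\]
so the divergence hypothesis in the moreover part is precisely the statement that $\tilde g_i[x]\tilde g_i^{-1}\to\infty$ in $\mathbf G(\mathbb Q_S)$ for every non-central $x\in\mathbf T(\mathbb Q)$.

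First I would invoke Theorem~\ref{th12} with the sequence $\{\tilde g_i\}$ and the orbit $\Gamma_S\backslash\Gamma_SH$; passing to a subsequence produces an algebraic weak-$^*$ limit $\tilde\mu$ of $\tilde g_i^*\mu_{\Gamma_S\backslash\Gamma_SH}$ on $X_S$. Next I would fix a compact open subgroup $K_f\subset\prod_{p\in S_f}\mathbf G(\mathbb Z_p)$ small enough that $\Gamma_S\cap(\mathbf G(\mathbb R)\times K_f)=\Gamma_\infty$; the quotient $X_S/K_f$ then decomposes into finitely many components, the identity component of which is naturally identified with $\Gamma_\infty\backslash\mathbf G(\mathbb R)$ by strong approximation. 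Using the identity $\Gamma_SH\tilde g_i^{-1}=\Gamma_S\cdot([h_i]H[h_i^{-1}])\cdot(h_ig_i^{-1})_\infty$ together with the fact that the archimedean factor of $[h_i]H[h_i^{-1}]$ is $h_i\mathbf T(\mathbb R)^0h_i^{-1}$, a direct calculation shows that the restriction of the $K_f$-averaged pushforward of $\tilde g_i^*\mu_{\Gamma_S\backslash\Gamma_SH}$ to this identity component equals, up to a positive constant independent of $i$, the measure $\mu_{\Gamma_\infty h_i\mathbf T(\mathbb R)^0g_i^{-1}}$. Taking limits then yields non-escape of mass together with algebraicity of every weak-$^*$ accumulation point.

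For the moreover part, the translated divergence hypothesis is exactly the input required to run a linearization / Dani--Margulis style argument in the $S$-arithmetic setting: were $\tilde\mu$ concentrated on the periodic orbit of a proper $\mathbb Q$-subgroup $\mathbf L\subsetneq\mathbf G$ (necessarily containing a conjugate of $\mathbf T$), one would extract from the algebraic structure of the support a non-central $x\in\mathbf T(\mathbb Q)$ whose conjugates $\tilde g_i[x]\tilde g_i^{-1}$ remain in a bounded subset of $\mathbf G(\mathbb Q_S)$, contradicting the hypothesis. Hence $\tilde\mu$ must be the Haar probability measure on a single connected component of $X_S$, and its pushforward to $\Gamma_\infty\backslash\mathbf G(\mathbb R)$ is then a translate of $\mu_{\Gamma_\infty\backslash\mathbf G(\mathbb R)^0}$. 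The main obstacle is precisely this rigidity step: converting the divergence into genuine $\mathbf G(\mathbb Q_S)^0$-invariance of $\tilde\mu$ requires access to the internal machinery developed in the proof of Theorem~\ref{th12} (the $S$-arithmetic linearization method classifying stabilizers of limit measures), not merely its statement. The descent step is also delicate, relying on strong approximation both to identify $\Gamma_\infty\backslash\mathbf G(\mathbb R)$ inside $X_S/K_f$ and to ensure that no spurious mass appears on the other connected components.
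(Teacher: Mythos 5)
Your overall strategy --- lift to $X_S=\Gamma_S\backslash\mathbf G(\mathbb Q_S)$, deform by $\tilde g_i=(g_i,h_i,\dots,h_i)$, apply Theorem~\ref{th12}, project back to $\Gamma_\infty\backslash\mathbf G(\mathbb R)$ --- is exactly the paper's strategy, and your manipulations with the diagonal embedding $[h_i]$ (notably the identity $\Gamma_S H\tilde g_i^{-1}=\Gamma_S\cdot([h_i]H[h_i]^{-1})\cdot(h_ig_i^{-1})_\infty$) are the same algebraic trick the paper uses in Lemma~\ref{l21}. There are, however, two substantive differences worth noting.

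First, your choice $H=\mathbf T(\mathbb R)^0\times\prod_{p\in S_f}\mathbf T(\mathbb Q_p)$ has non-compact non-archimedean factors, and therefore the orbit $\Gamma_S\backslash\Gamma_SH$ need not lie inside the identity component $\Gamma_S\backslash\Gamma_S\mathbf G(\mathbb R)K_S$ where the projection $\varpi_\infty$ is defined; the ``restriction to the identity component equals a constant multiple of $\mu_{\Gamma_\infty h_i\mathbf T(\mathbb R)^0g_i^{-1}}$'' claim then requires a genuine argument and is not obviously true as stated. The paper sidesteps this by choosing $H$ to be generated by $\Gamma_S\cap\mathbf T(\mathbb Q_S)$ together with $\mathbf T(\mathbb R)\times F$, where $F$ is the \emph{compact} closure in $\prod_{p\in S_f}\mathbf G(\mathbb Q_p)$ of a finite-index subgroup $\Lambda\le\mathbf T(\mathbb Z)\cap\mathbf T(\mathbb R)^0$ (diagonally embedded). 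With this choice, $\Gamma_S\backslash\Gamma_S(\mathbf T(\mathbb R)^0\times F)$ is precisely the closure of the real orbit $\Gamma_S\backslash\Gamma_S\mathbf T(\mathbb R)^0$ (Lemma~\ref{scl21}), it is entirely contained in $\Gamma_S\backslash\Gamma_S\mathbf G(\mathbb R)K_S$, and Lemmas~\ref{l21}--\ref{l23} make the projection $\varpi_\infty^*((g_i,h_i)^*\mu_{\Gamma_S(\mathbf T(\mathbb R)^0\times F)})=\mu_{\Gamma_\infty h_i\mathbf T(\mathbb R)^0g_i^{-1}}$ clean, with no need for $K_f$-averaging or renormalizing constants. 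You could likely repair your approach by shrinking $H$ to the paper's version, so this is more a technical flaw than a conceptual one.

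Second, you correctly and honestly flag the real gap: the statement of Theorem~\ref{th12} alone only tells you the limit is \emph{some} algebraic probability measure, and does not by itself let you run the ``moreover'' argument or even guarantee that the algebraic limit measure projects to a finite-volume real orbit. The paper resolves this precisely by proving Theorem~\ref{scmr}, a sharpened version of Theorem~\ref{th12} specific to the $\mathbb R$-split maximal torus case, which additionally identifies the invariance group $M$ of the limit as satisfying $\mathbf M(\mathbb R)^0\subset M$ and $\Gamma_S\backslash\Gamma_SM=\overline{\Gamma_S\backslash\Gamma_S\mathbf M(\mathbb R)^0}$ for a $\mathbb Q$-subgroup $\mathbf M\supset\mathbf T$, and then proves (via Lemmas~\ref{scl53}--\ref{scl54} and a Weyl-group argument) that the divergence hypothesis forces $\mathbf M=\mathbf G$. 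Your sketch for this last step --- extracting from a proper $\mathbf M$ a non-central $x\in\mathbf T(\mathbb Q)$ with bounded conjugates --- is indeed the right idea and matches the paper's mechanism (a non-central $x\in\mathbf T$ lying in the center of the proper group $\mathbf M$ has $\delta_ix\delta_i^{-1}=x$ bounded), but you are right that you cannot get there from Theorem~\ref{th12}'s statement alone. So the proposal is essentially the correct plan with an acknowledged hole exactly where the paper does additional work.
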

\begin{remark}
In the context of Theorem~\ref{th11}, for any $h\in\mathbf G(\mathbb Q)$ and any $g\in\mathbf G(\mathbb R)$, $\Gamma_\infty h\mathbf T(\mathbb R)^0g^{-1}$ is compact in $\Gamma_\infty\backslash\mathbf G(\mathbb R)$. The argument for the proof realizes these orbits as projections of certain deformations of a fixed periodic orbit in the $S$-arithmetic
extension of $\Gamma_\infty\backslash \mathbf G(\bR)$. In order to apply Theorem~\ref{th12} we need this new lifted orbit to be an orbit of a group containing a maximal torus. 
For this to hold, we need $\mathbf T$ to be $\bR$-split and this is the reason for the extra conditions appearing in the statement of Theorem~\ref{th11} on top of the ones appearing
in the statement of Theorem~\ref{th12}.
\end{remark}
\begin{comment}
In the course of the proof of Theorem~\ref{th11}, we also prove the following result which may be of independent interest.

\begin{theorem}\label{the13}
Let $\mathbf G$ be an algebraic group defined over $\mathbb Q$. Then there exists a compact subset $K\subset \Gamma_\infty\backslash\mathbf G(\mathbb R)$ such that for any maximal $\mathbb R$-split torus $F$ in $\mathbf G(\mathbb R)$ and any $x\in\mathbf G(\mathbb R)$, $$\Gamma_\infty xF\cap K\neq\emptyset$$
\end{theorem}

\begin{remark}
Theorem \ref{the13} strengthens \cite[Theorem 1.3(1)]{TW03} by showing the uniformity of $K$ for any maximal $\mathbb R$-split torus. Note that here $\Gamma_\infty$ is not necessarily a lattice in $\mathbf G(\mathbb R)$.
\end{remark}
\end{comment}
\subsection{About the proof of the main Theorem~\ref{th12}}
We follow closely the structure of \cite{EMS96, EMS97} and divide the argument into
two parts:
\begin{enumerate}[(i)]
\item Establish non-escape of mass.
\item Establish algebraicity of limit measures.
\end{enumerate} 
We prove non-escape
of mass in \S\ref{nondiv} and then establish the algebraicity of the limit measures in \S\ref{mr}. 
In order to establish  
part (i) we go along the lines of \cite{EMS97} but prefer to use the more robust formalism of $(C,\alpha)$-good functions
a la Klainbock and Margulis \cite{KM98} which was developed in its $S$-arithmetic version in \cite{KT07}. 
To establish part (ii) we follow \cite{EMS96, GO11} and use linearization and
Ratner's classification theorem of measures invariant and ergodic under one parameter unipotent flows. Unlike the real
case in \cite{EMS96}, in which the key observation is that a limit measure is invariant under a one-parameter unipotent 
subgroup because it has an invariance group having a non-trivial nilpotent line in its Lie algebra, over the $p$-adics, this is not enough. This is due to the existence of compact unipotent groups. 
In \S\ref{gp} we deal with this issue and prove that the limit measures in our discussion are invariant under a full-one parameter unipotent flow where the challenge is to prove invariance under large unipotent elements rather than just small ones. This invariance is then used in \S\ref{mr} in conjuction with the $S$-arithmetic linearization technique and Ratner's measure classification theorem to establish (ii).
\begin{remark}
For more details about the S-arithmetic Ratner's theorem and linearization technique we refer the reader also to
\cite{MT94,R98,Tom00}. One of the possible extensions of the results in this paper is to establish a theorem about limiting distributions of deformations of adelic periodic orbits. For limiting distributions of adelic torus orbits, one can read e.g. \cite{ELMV09,ELMV,Kh19,V10,Z10,DS18,DS18b}.  In general, a sequence of deformations of a fixed periodic 
adelic torus orbit does not necessarily equidistribute in the ambient space, and one of the obstructions is the escape of mass phenomenon which leads to the limiting measures being not probability. One can refer to \cite[Example 2.11]{AS} for an example of this phenomenon for $\mathbf G=\PGL_2$ due to Ubis. It is interesting to know whether the escape of mass phenomenon disappears when $\mathbf G$ is of higher rank.
\end{remark}

\section{Growth rates of functions}\label{ca}
In this section, we study $(C,\alpha)$-good functions in local fields. The notion of $(C,\alpha)$-good functions is introduced in \cite{KM98}. One may read \cite{Kle10,EMS97} for more details. Here we use the definition in \cite{KT07} for local fields. The goal in this section is Proposition \ref{p31}, which shows that certain functions are $(C,\alpha)$-good. At the end of this section, we prove a weak analogue of the intermediate value theorem for $(C,\alpha)$-good functions in local fields (Proposition \ref{p33}) which will be important in the discussion in \S \ref{gp}.  

Let $k=\mathbb Q_p$ ($p=$ prime or $\infty$) with the $p$-adic valuation $|\cdot|_p$. Let $F$ be a finite extension of $k$. Then there exists a unique extension $|\cdot|_F$ of $|\cdot|_p$ on $F$ such that for any $x\in F$ $$|x|_F=|N_{F/k}(x)|_p^{\frac1{[F:k]}}$$ where $N_{F/k}(x)$ is the norm of $x$. Equipped with $|\cdot|_F$, $F$ is a complete field. Note that for any extensions $k\subset F\subset L$, we have $N_{F/k}\circ N_{L/F}=N_{L/k}$ and $[L:F][F:k]=[L:k].$ Hence one can take the limit over all the finite extensions of $k$ and get a well-defined norm $|\cdot|_{\bar k}$ on the algebraic closure $\overline k$ of $k$. In the following, we will use the notation $|\cdot|_F$ for the norm on any finite extension $F$ of $k=\mathbb Q_p$ obtained in this manner. We will write $\mathcal O_F:=\{x\in F:|x|_F\leq1\}.$

Let $F$ be a complete field with a norm $|\cdot|$, $X$ a metric space, $\mu$ a Borel measure on $X$ and $U$ a subset of $X$. Recall \cite{KT07} that a continuous function $f:U\to F$ is called $(C,\alpha)$-good on $U$ with respect to $\mu$ if for any open ball $B\subset U$ centered in $\supp\mu$, we have $$\mu(\{x\in B:|f(x)|<\epsilon\})\leq C\left(\frac\epsilon{\|f\|_{\mu,B}}\right)^\alpha\mu(B)$$ for any $\epsilon>0$. Here $$\|f\|_{\mu,B}=\sup\{|f(x)|:x\in B\cap\supp(\mu)\}.$$ In the sequel, we will study the case when $X$ is a product of local fields and the measure $\mu$ is the Haar measure on $X$.

\begin{lemma}[C.f.~{\cite[Lemma 2.4]{EMS97}}]\label{l31}
Let $F$ be a finite extension of $\mathbb Q_p$. Then there exists $\gamma\in\mathbb N$ such that the determinant of the matrix generated by $s_i^ke^{\lambda_j s_i}$ $(i=1,2,...,n^2, j=1,2,...,n, k=0,1,2,...,n-1)$ $$\left|\begin{array}{ccccccc}e^{\lambda_1s_1} & e^{\lambda_1s_2} & \cdots & e^{\lambda_1s_{n^2}} \\s_1e^{\lambda_1s_1} & s_2e^{\lambda_1s_2} & \cdots & s_{n^2}e^{\lambda_1s_{n^2}} \\\vdots & \vdots &\cdots & \vdots\\s_1^{n-1}e^{\lambda_1s_1} & s_2^{n-1}e^{\lambda_1s_2} & \cdots & s_{n^2}^{n-1}e^{\lambda_1s_{n^2}} \\e^{\lambda_2s_1} & e^{\lambda_2s_2} & \cdots & e^{\lambda_2s_{n^2}} \\ s_1e^{\lambda_2s_1} & s_2e^{\lambda_2s_2} & \cdots & s_{n^2}e^{\lambda_2s_{n^2}}\\ \vdots & \vdots & \vdots & \vdots \\s_1^{n-1}e^{\lambda_ns_1} & s_2^{n-1}e^{\lambda_ns_2} & \cdots & s_{n^2}^{n-1}e^{\lambda_ns_{n^2}}\end{array}\right|=\prod_{i<j}(\lambda_i-\lambda_j)^{m_{i,j}}\prod_{i<j}(s_i-s_j)\phi(\bm\lambda,\bm s)$$ where $m_{i,j}\in\mathbb N$, $\phi(\bm{\lambda},\bm{s})$ is an analytic function in $\bm{\lambda}=(\lambda_1,\lambda_2,\dots,\lambda_n)\in(p^\gamma\mathcal O_F)^n$ and $\bm s=(s_1,s_2,\dots,s_{n^2})\in(p^\gamma\mathcal O_F)^{n^2}$, and for any $\bm\lambda\in(p^\gamma\mathcal O_F)^n$, we have $\phi(\bm\lambda,\bm s)\neq0$.
\end{lemma}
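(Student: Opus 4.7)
The plan is to treat $D(\bm\lambda, \bm s) \defi \det(s_i^k e^{\lambda_j s_i})_{(j,k),i}$ as an analytic function on a polydisk in $F^n \times F^{n^2}$ and strip off the claimed linear factors by antisymmetry and Taylor expansion of the exponentials. First I would fix $\gamma \in \bN$ large enough that, for $\lambda, s \in p^\gamma\mathcal O_F$, the non-Archimedean exponential series $e^{\lambda s} = \sum_{\ell \ge 0}(\lambda s)^\ell/\ell!$ converges absolutely; this requires $|\lambda s|_F$ to lie below the convergence radius $p^{-1/(p-1)}$ of the $p$-adic exponential and is achieved for all sufficiently large $\gamma$. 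On this polydisk, $D$ is then an honest analytic function of $(\bm\lambda, \bm s)$.

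Next I would extract the factor $\prod_{i<j}(s_i - s_j)$. Swapping $s_i \leftrightarrow s_j$ exchanges the $i$-th and $j$-th columns of the matrix, so $D$ is antisymmetric in the $s_i$'s, and standard Weierstrass-style division shows that $\prod_{i<j}(s_i-s_j)$ divides $D$ in the analytic ring on the polydisk. Extracting $(\lambda_i-\lambda_j)^{m_{i,j}}$ is slightly more delicate. For each $i < j$, write $e^{\lambda_j s} = e^{\lambda_i s}\sum_{\ell\ge 0}(\lambda_j-\lambda_i)^\ell s^\ell/\ell!$ and expand the rows of the $j$-th block in powers of $(\lambda_j-\lambda_i)$; iteratively subtracting suitable combinations of rows from the $i$-th block cancels the lower-order terms in $(\lambda_j-\lambda_i)$, showing that $D$ vanishes along $\lambda_i=\lambda_j$ to order at least some positive integer $m_{i,j} \in \bN$ (a careful count yields $m_{i,j}=n^2$, but the weak conclusion $m_{i,j}\ge 1$ suffices for the lemma). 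Hence $\prod_{i<j}(\lambda_i-\lambda_j)^{m_{i,j}}$ divides $D$.

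Dividing $D$ by these factors yields an analytic function $\phi(\bm\lambda, \bm s)$ on the polydisk. To check $\phi(\bm\lambda, \cdot) \not\equiv 0$ for every fixed $\bm\lambda$, I would use the multilinear expansion
\[
D(\bm\lambda, \bm s) = \sum_{\vec{\ell}}\left(\prod_{j,k}\frac{\lambda_j^{\ell_{j,k}}}{\ell_{j,k}!}\right)\det\bigl(s_i^{k+\ell_{j,k}}\bigr)_{(j,k),i},
\]
in which the small-$\bm\lambda$ regime is dominated by those $\vec{\ell}$ that make $\{k+\ell_{j,k}\}$ equal to $\{0,1,\ldots,n^2-1\}$; each such minimal choice contributes a nonzero classical Vandermonde $\pm\prod_{i<j}(s_j-s_i)$. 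Matching this leading behavior against $\prod(\lambda_i-\lambda_j)^{m_{i,j}}\prod(s_i-s_j)\phi$ forces $\phi(\bm 0, \bm s)$ to be a nonzero rational multiple of $\prod_{i<j}(s_j-s_i)$, so $\phi(\bm 0, \cdot)\not\equiv 0$. For a general $\bm\lambda_0$, I would perform the analogous Taylor expansion of $D$ around $\bm\lambda_0$ along its coincidence locus and extract the leading alternant to conclude $\phi(\bm\lambda_0, \cdot)\not\equiv 0$.

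The main obstacle I anticipate is precisely this last step: verifying $\phi(\bm\lambda_0, \cdot)\not\equiv 0$ for \emph{every} $\bm\lambda_0$ rather than only generic ones. Once one knows $m_{i,j}$ is exactly the order of vanishing of $D$ along $\lambda_i=\lambda_j$, the desired non-vanishing of $\phi$ follows from the recursive alternant identity; but pinning down the precise $m_{i,j}$ and executing the row-reduction algebra uniformly in the $p$-adic analytic setting is the main bookkeeping burden. Everything else --- convergence of the $p$-adic exponential, antisymmetry-based division, and the multilinearity expansion --- is routine; the only genuinely quantitative $p$-adic input is the choice of $\gamma$.
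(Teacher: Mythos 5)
Your plan takes a genuinely different route from the paper's. The paper does not re-derive the factorization from scratch: it appeals to \cite[Lemma 2.4]{EMS97} for the real/complex case, notes that once the determinant and $\phi$ are expanded as power series the identity holds with rational coefficients, and then transfers the identity to the $p$-adic setting simply by checking that every series involved converges on a small enough polydisk. Your approach attempts a direct $p$-adic derivation (antisymmetry for the $s$-factors, row-reduction for the $\lambda$-factors, alternant analysis for the leading term). That is a legitimate strategy, but considerably more laborious, and as written it contains two concrete gaps.

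First, the leading-coefficient computation is off. Once you have already pulled out $\prod_{i<j}(s_i-s_j)$ from $D$, the lowest-degree-in-$\bm\lambda$ part of $D$ is $\prod(\lambda_i-\lambda_j)^{m_{i,j}}$ times the Vandermonde $\prod_{i<j}(s_i-s_j)$ times a nonzero rational \emph{constant}; that is, $\phi(\bm 0,\bm s)$ is a constant, not a multiple of the alternant. Your assertion that $\phi(\bm 0,\bm s)$ is ``a nonzero rational multiple of $\prod_{i<j}(s_j-s_i)$'' double-counts the Vandermonde and, taken at face value, would give $\phi(\bm 0,\bm 0)=0$, which would make the lemma false.

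Second, you only argue $\phi(\bm\lambda,\cdot)\not\equiv 0$, whereas the lemma is applied in the proof of Proposition~\ref{p31} by dividing by the value $\phi(\bm\lambda,s_1,\dots,s_{n^2})$ at specific points $s_1,\dots,s_{n^2}$; one needs $\phi$ nowhere vanishing on the polydisk, not merely not identically zero in $\bm s$. Relatedly, your remark that ``$m_{i,j}\ge 1$ suffices'' is inadequate: to obtain a nonvanishing quotient $\phi$ you must divide out the \emph{exact} orders of vanishing of $D$ along $\lambda_i=\lambda_j$. The good news is that in the ultrametric setting nowhere-vanishing follows cheaply once $\phi(\bm 0,\bm 0)\neq 0$ is established (shrink $\gamma$ so that the constant term of the power series of $\phi$ dominates all other terms); so your argument can be repaired, but only after correcting the leading-coefficient claim and upgrading the target from ``not identically zero'' to ``nowhere zero on the polydisk.''
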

\begin{proof}
This lemma is proved in \cite[Lemma 2.4]{EMS97} for $F=\mathbb C$ and $\mathbb R$. Here we give a proof for the $p$-adic case. Indeed, if we expand the determinant and the analytic function $\phi(\bm{\lambda},\bm s)$ as power series of $s_1,s_2,...,s_{n^2}$ and $\lambda_1,\lambda_2,...,\lambda_n$ over $\mathbb R$, the formula proved in \cite[Lemma 2.4]{EMS97} holds as an identity of power series with rational coefficients. By examining the ranges of convergence of power series $s_i^ke^{\lambda_j s_i}$ $(1\leq i\leq n^2,\;1\leq j\leq n,\;0\leq k\leq n-1)$ in the $p$-adic case, one concludes that the formula holds for $\lambda_1,\lambda_2,...,\lambda_n$ and $s_1,s_2,...,s_{n^2}$ in small neighborhoods of $0$ in $\mathcal O_F$. This completes the proof of the lemma.
\end{proof}

\begin{proposition}\label{p31}
Let $F$ be a finite field extension of $\mathbb Q_p$ and $n\in\mathbb N$.
\begin{enumerate}
\item If $p$ is Archimedean, then there exists $\delta_0>0$ such that for any $c_i\in\mathbb C$, any $\lambda_i\in\mathbb C$ $(1\leq i\leq n)$ and any interval $I$ in $\mathbb R$ of length at most $\delta_0>0$, the function $f:I\to\mathbb C$ defined by $$f(s)=\sum_{i=1}^n\sum_{l=0}^{n-1} c_{i,l}s^le^{\lambda_is},\quad s\in\mathbb Z_p,$$ is $(C,\alpha)$-good for some $C,\alpha>0$ depending only on $\delta_0>0$ and $n$. The measure on $I$ is chosen to be the Lebesgue measure.
\item If $p$ is non-Archimedean, then there exists $k\in\mathbb N$ such that for any $c_i\in F$ and any distinct $\lambda_i\in p^k\mathcal O_F$ $(1\leq i\leq n)$, the function $f:\mathbb Z_p\to F$ defined by $$f(s)=\sum_{i=1}^n\sum_{l=0}^{n-1} c_{i,l}s^le^{\lambda_is},\quad s\in\mathbb Z_p,$$ is $(C,\alpha)$-good for some $C,\alpha>0$ depending only on $F$ and $n$. Here we choose the measure $\mu$ on $\mathbb Z_p$ to be the Haar measure on $\mathbb Q_p$ with $\mu(\mathbb Z_p)=1$.
\end{enumerate}
\end{proposition}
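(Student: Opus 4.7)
The plan is to exhibit $V := \mathrm{span}\{s^l e^{\lambda_i s} : 0 \leq l \leq n-1,\ 1 \leq i \leq n\}$ as an $n^2$-dimensional space of analytic functions enjoying a uniform Chebyshev-type non-degeneracy, from which a $(C,\alpha)$-good bound follows by a standard Kleinbock--Margulis style argument.

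For part (1), the Archimedean case, this is essentially Proposition~2.3 of \cite{EMS97} and I would follow that proof closely. Given any interval $I$ of length $\leq\delta_0$ near the origin, pick $n^2$ well-separated sample points $s_1,\dots,s_{n^2}\in I$. By Lemma~\ref{l31} the evaluation matrix $\bigl(s_j^l e^{\lambda_i s_j}\bigr)$ is invertible, its determinant being bounded below by the Vandermonde factor $\prod_{i<j}|s_i-s_j|$ times the non-vanishing analytic factor $\phi(\bm\lambda,\bm s)$. Cramer's rule then expresses any $f\in V$ as an interpolation $f(s)=\sum_j\alpha_j(s)f(s_j)$ with coefficients $\alpha_j(s)$ themselves lying in (a space structurally analogous to) $V$ and uniformly bounded. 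This is the non-degeneracy needed to propagate the classical $(C,\alpha)$-good bound for polynomials of degree $\leq n^2-1$ (via a dyadic sublevel-set decomposition) to the whole space $V$.

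For part (2), the non-Archimedean case, the structure is the same but requires additional $p$-adic input. First one must check that the $p$-adic exponential $e^{\lambda_i s}=\sum_{m\geq 0}(\lambda_i s)^m/m!$ converges on $\mathbb Z_p$, which forces $|\lambda_i|_F<p^{-1/(p-1)}$; this is achieved by taking $k$ large enough in the hypothesis (also large enough for Lemma~\ref{l31} to apply). Then $V$ is an $n^2$-dimensional subspace of $p$-adic analytic functions on $\mathbb Z_p$. For a sub-ball $B=a+p^N\mathbb Z_p$, one selects representatives $s_1,\dots,s_{n^2}$ of $n^2$ distinct cosets modulo $p^{N+M}\mathbb Z_p$ (for some $M=M(n)$) inside $B$, so that after rescaling to $\mathbb{Z}_p$ the quantities $|s_i-s_j|_F$ admit a uniform lower bound. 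Combining Cramer's rule with the ultrametric inequality $|f(s)|_F\leq\max_j|\alpha_j(s)|_F\,|f(s_j)|_F$ reduces the sublevel-set estimate for $f$ to the known $(C,\alpha)$-good property of $p$-adic polynomials of bounded degree established in \cite{KT07}.

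The main obstacle will be ensuring uniformity of $C$ and $\alpha$ in the coefficients $c_{i,l}$ and the exponents $\lambda_i$. The decisive ingredient is the uniform lower bound on $|\phi(\bm\lambda,\bm s)|_F$ supplied by Lemma~\ref{l31}: since $\phi$ is analytic and non-vanishing at $\bm\lambda=0$, it is bounded below by an absolute constant on any sufficiently small polydisk in $(p^\gamma\mathcal O_F)^n\times(p^\gamma\mathcal O_F)^{n^2}$. Combined with the uniform separation of the sample points, this gives the non-degeneracy that propagates through the Kleinbock--Margulis argument and yields constants depending only on $n$ and $F$ (respectively on $n$ and $\delta_0$). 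A secondary technical point, already handled in \cite{KT07}, is the correct normalization of the Haar measure on $B$ when rescaling to the unit ball in the sublevel-set inequality.
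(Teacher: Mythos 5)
Your proposal assembles the right ingredients—Lemma~\ref{l31}, the determinant/interpolation formula, the convergence constraint on $|\lambda_i|_F$ forcing $k$ large, the uniform lower bound on $|\phi|$—but the central step is off. You propose to \emph{fix} $n^2$ sample points $s_1,\dots,s_{n^2}$ as coset representatives inside $B$ (so with separation a fixed fraction of the radius of $B$) and then ``reduce to the $(C,\alpha)$-good property of polynomials.'' This reduction does not close. The interpolation formula together with Lemma~\ref{l31} yields $f(s)=\sum_j f(s_j)\,\ell_j(s)\,\psi_j(s)$ where $\ell_j$ is the Lagrange basis polynomial and $\psi_j(s)=\phi(\bm\lambda,s_1,\dots,s,\dots,s_{n^2})/\phi(\bm\lambda,s_1,\dots,s_{n^2})$. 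Because the $\psi_j$ depend on $j$, the function $f$ is not a bounded scalar times the interpolating polynomial $\sum_j f(s_j)\ell_j(s)$, and there is no two-sided comparison between $|f(s)|$ and that polynomial's modulus (cancellation can make the polynomial much smaller than the individual terms). More fundamentally, with \emph{fixed} sample points you have no control on the values $|f(s_j)|$—they may all be comparable to $\|f\|_{\mu,B}$—so the resulting ultrametric bound $|f(s)|\le\max_j|\alpha_j(s)|\,|f(s_j)|$ says nothing about the size of the sublevel set $\{s\in B:|f(s)|<\epsilon\}$.

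The missing idea, which is what the paper's proof does, is to choose the $n^2$ points \emph{inside the sublevel set} $U_{B,f,\epsilon}$ itself, with pairwise separation $\ge\mu(U_{B,f,\epsilon})/n^2$ (a pigeonhole estimate in $\mathbb Z_p$). Then $|f(s_j)|<\epsilon$ for all $j$, and combining the ultrametric inequality with $|s-s_i|\le\mu(B)$, $|s_j-s_i|\ge\mu(U_{B,f,\epsilon})/n^2$, and the uniform two-sided bound on $\phi$ gives
\[
\sup_{s\in B}|f(s)|_F\le C\,\epsilon\cdot\frac{\mu(B)^{n^2-1}}{\left(\mu(U_{B,f,\epsilon})/n^2\right)^{n^2-1}}.
\]
Solving this for $\mu(U_{B,f,\epsilon})$ gives the $(C,\alpha)$-good inequality directly with $\alpha=1/(n^2-1)$, with no appeal to polynomial non-divergence from \cite{KT07}. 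The coupling of the sample points and their mutual separation to the measure of the sublevel set is precisely what you need to restore; once you do so, the rest of your outline (especially the role of the non-vanishing of $\phi$ near the origin) is correct.
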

\begin{proof}
The case for the Archimedean place $p=\infty$ is essentially proved in \cite[Corollary 2.10]{EMS97}. Here we give a proof for non-Archimedean places. Denote by $$d(s_1,s_2,\dots,s_{n^2}):=\left|\begin{array}{ccccccc}e^{\lambda_1s_1} & e^{\lambda_1s_2} & \cdots & e^{\lambda_1s_{n^2}} \\s_1e^{\lambda_1s_1} & s_2e^{\lambda_1s_2} & \cdots & s_{n^2}e^{\lambda_1s_{n^2}} \\\vdots & \vdots &\cdots & \vdots\\s_1^{n-1}e^{\lambda_1s_1} & s_2^{n-1}e^{\lambda_1s_2} & \cdots & s_{n^2}^{n-1}e^{\lambda_1s_{n^2}} \\e^{\lambda_2s_1} & e^{\lambda_2s_2} & \cdots & e^{\lambda_2s_{n^2}} \\ s_1e^{\lambda_2s_1} & s_2e^{\lambda_2s_2} & \cdots & s_{n^2}e^{\lambda_2s_{n^2}}\\ \vdots & \vdots & \vdots & \vdots \\s_1^{n-1}e^{\lambda_ns_1} & s_2^{n-1}e^{\lambda_ns_2} & \cdots & s_{n^2}^{n-1}e^{\lambda_ns_{n^2}}\end{array}\right|.$$ Let $B$ be an open ball in $\mathbb Z_p$ and define $$U_{B,f,\epsilon}:=\{s\in B :|f(s)|_F<\epsilon\}.$$  We may assume $\mu(U_{B,f,\epsilon})>0$. By computing the volume of $p$-adic open balls, one can inductively find $n^2$ points $s_1,s_2,\dots,s_{n^2}\in U_{B,f,\epsilon}$ such that for any $1\leq i\neq j\leq n^2$ $$|s_i-s_j|_p\geq\frac{\mu(U_{B,f,\epsilon})}{n^2}.$$ Now by interpolation formula, we have $$f(s)=\sum_{j=1}^{n^2}f(s_j)\frac{d(s_1,\dots,s_{j-1},s,s_{j+1}\dots,s_{n^2})}{d(s_1,s_2,\dots,s_{n^2})},$$ and by Lemma \ref{l31} $$f(s)=\sum_{j=1}^{n^2}f(s_j)\frac{\prod_{i\neq j}(s-s_i)\phi(\bm\lambda,s_1,\dots,s,\dots,s_{n^2})}{\prod_{i\neq j}(s_j-s_i)\phi(\bm\lambda,s_1,\dots,s_{n^2})}$$ for some analytic function $\phi$. Here $\phi(\bm\lambda,\bm s)\neq0$ for $\bm\lambda$ in a small neighborhood $(p^l\mathcal O_F)^n$ of $\bm 0$.  This implies that for any $s\in B$ 
\begin{align*}
|f(s)|_F&\leq\max_{1\leq j\leq n^2}\left\{\left|f(s_j)\frac{\prod_{i\neq j}(s-s_i)\phi(\bm\lambda,s_1,\dots,s,\dots,s_{n^2})}{\prod_{i\neq j}(s_j-s_i)\phi(\bm\lambda,s_1,\dots,s_{n^2})}\right|_F\right\}\\
&\leq C\epsilon\frac{\mu(B)^{n^2-1}}{(\mu(U_{B,f,\epsilon})/n^2)^{n^2-1}}
\end{align*}
for some constant $C>0$ depending only on $n$ and $\phi$. Hence $$\mu(U_{B,f,\epsilon})\leq n^2C^{\frac1{n^2-1}}\left(\frac\epsilon{\sup_{s\in B}|f(s)|_F}\right)^{\frac1{n^2-1}}\mu(B).$$ We finish the proof of the proposition by letting $\alpha=1/(n^2-1)$ and replacing $n^2C^{\frac1{n^2-1}}$ by $C$.
\end{proof}

We conclude this section by proving the following proposition, which is a weak analogue of the intermediate value theorem in the $p$-adic case.

\begin{proposition}[Weak intermediate value theorem]\label{p33}
Let $F$ be a finite extension of $\mathbb Q_p$ with norm $|\cdot|_F$ and $f:\mathbb Z_p\to F$ be a continuous $(C,\alpha)$-good function. Then there exists a constant $C'>0$ depending only on $C$, $\alpha$ and $p$ such that for any $n\in\mathbb N$ we have $$\sup_{s\in p^{n+1}\mathbb Z_p}|f(s)|_F\geq\sup_{s\in p^n\mathbb Z_p}|f(s)|_F/C'.$$ In particular, if $f(0)=0$ and $|f(u)|_F\geq\rho$ for some $u\in\mathbb Z_p$ and $\rho>0$, then one can find $v\in\mathbb Z_p$ such that $\rho/C'\leq|f(v)|_F<\rho.$
\end{proposition}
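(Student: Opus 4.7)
The plan is to exploit the non-Archimedean geometry of $\mathbb Z_p$: the inner ball $p^{n+1}\mathbb Z_p$ sits inside the outer ball $p^n\mathbb Z_p$ with measure ratio exactly $1/p$. Therefore, if $|f|_F$ were uniformly very small on the inner ball, then the $(C,\alpha)$-good inequality applied to the outer ball would force the supremum on the outer ball to be small too, losing only a constant factor depending on $C,\alpha$ and $p$. This single idea gives the first assertion, and the ``weak intermediate value'' statement then falls out by iterating this comparison down from $\|f\|_{\mu,\mathbb Z_p}$ until the sup first drops below $\rho$.

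To execute the first step, set $B:=p^n\mathbb Z_p$, $M:=\|f\|_{\mu,B}=\sup_{s\in B}|f(s)|_F$, and $m:=\sup_{s\in p^{n+1}\mathbb Z_p}|f(s)|_F$. Note that $B$ is an open ball centered at $0\in\supp\mu$ and, with the normalization $\mu(\mathbb Z_p)=1$ from Proposition~\ref{p31}, $\mu(p^{n+1}\mathbb Z_p)=\mu(B)/p$. For any $\epsilon>m$ every point of $p^{n+1}\mathbb Z_p$ lies in $\{s\in B:|f(s)|_F<\epsilon\}$, so the $(C,\alpha)$-good estimate yields
$$
\frac{1}{p}\,\mu(B)\;\le\;\mu\bigl(\{s\in B:|f(s)|_F<\epsilon\}\bigr)\;\le\;C\left(\frac{\epsilon}{M}\right)^{\!\alpha}\mu(B).
$$
Solving for $\epsilon$ and letting $\epsilon\to m^+$ gives $m\ge M/(pC)^{1/\alpha}$, so the first assertion holds with $C':=(pC)^{1/\alpha}$.

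For the second assertion, write $M_n:=\sup_{s\in p^n\mathbb Z_p}|f(s)|_F$. Continuity of $f$ at $0$ combined with $f(0)=0$ gives $M_n\to 0$ as $n\to\infty$, while $M_0\ge|f(u)|_F\ge\rho$ by hypothesis. Since $(M_n)$ is nonincreasing, there is a smallest index $n\ge 0$ with $M_{n+1}<\rho$; for this $n$ we have $M_n\ge\rho$, so the first part of the proposition yields $M_{n+1}\ge M_n/C'\ge\rho/C'$. Because $p^{n+1}\mathbb Z_p$ is compact and $|f|_F$ is continuous, the supremum defining $M_{n+1}$ is attained at some $v\in p^{n+1}\mathbb Z_p$, and this $v$ satisfies $\rho/C'\le|f(v)|_F<\rho$.

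I do not anticipate a serious obstacle here: the whole argument is essentially one application of the $(C,\alpha)$-good inequality plus a compactness step. The only small points to be careful about are that the normalized Haar measure is in force (which is what gives the exact ratio $1/p$ between consecutive ball measures) and that compactness of the $p$-adic closed balls is needed to conclude that the supremum on $p^{n+1}\mathbb Z_p$ is actually attained by some $v$ in the second part of the statement.
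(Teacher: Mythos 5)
Your proof is correct and follows essentially the same argument as the paper's: apply the $(C,\alpha)$-good inequality on the ball $p^n\mathbb Z_p$, use the exact measure ratio $\mu(p^{n+1}\mathbb Z_p)/\mu(p^n\mathbb Z_p)=1/p$, and in the second part choose the transition index where the shrinking suprema first drop below $\rho$. Your version is marginally cleaner — you contain the whole inner ball in the sublevel set and get the slightly sharper constant $C'=(pC)^{1/\alpha}$, while the paper loses a factor of $2^{1/\alpha}$ by instead bounding the sublevel set's measure by half of $\mu(p^{n+1}\mathbb Z_p)$ — and your explicit appeal to compactness to realize the supremum in the second part is a useful detail the paper leaves implicit.
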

\begin{proof}
For any $n\in\mathbb N$, we denote by $S_n:=\sup_{s\in p^n\mathbb Z_p}|f(s)|_F.$ By the $(C,\alpha)$-good property of $f$, for any $\epsilon>0$
$$\mu(\{s\in p^n\mathbb Z_p:|f(s)|_F<\epsilon\})\leq C\left(\frac\epsilon{S_n}\right)^\alpha\mu(p^n\mathbb Z_p).$$ Let $$\epsilon_n=S_n\left(\frac{\mu(p^{n+1}\mathbb Z_p)}{2C\mu(p^n\mathbb Z_p)}\right)^\frac1\alpha=\frac{S_n}{(2Cp)^\frac1\alpha}$$ and we have $$\mu(\{s\in p^n\mathbb Z_p:|f(s)|_F<\epsilon_n\})\leq\frac12\mu(p^{n+1}\mathbb Z_p).$$ This implies that there exists $v\in p^{n+1}\mathbb Z_p$ such that $$|f(v)|_F\geq\epsilon_n=\frac{S_n}{(2Cp)^\frac1\alpha}.$$ We finish the proof of the first statement by taking $C'=(2Cp)^\frac1\alpha$.

For the second statement, by continuity of $f$, one can find $n\in\mathbb N$ such that $$\sup_{s\in p^n\mathbb Z_p}|f(s)|_p\geq\rho\text{ and }\sup_{s\in p^{n+1}\mathbb Z_p}|f(s)|_p<\rho.$$ Now we can apply the first statement and obtain $$\sup_{s\in p^{n+1}\mathbb Z_p}|f(s)|_F\geq\sup_{s\in p^n\mathbb Z_p}|f(s)|_F/C'\geq\rho/C'.$$ This implies that there exists $v\in p^{n+1}\mathbb Z_p$ such that $\rho/C'\leq|f(v)|_F<\rho.$
\end{proof}

As a corollary, we deduce the following result.
\begin{corollary}\label{c31}
Let $F$ be a finite extension of $\mathbb Q_p$ with norm $|\cdot|_F$. Let $C,\alpha>0$ and $\{f_i\}_{i\in\mathbb N}$ a sequence of continuous $(C,\alpha)$-good function from $\mathbb Z_p$ to $F$. Suppose $\sup_{s\in\mathbb Z_p}|f_i(s)|_F\to\infty$ as $i\to\infty.$ Then there is a sequence $\{s_i\}$ in $\mathbb Z_p$ such that $s_i\to0\text{ and } f_i(s_i)\to\infty.$
\end{corollary}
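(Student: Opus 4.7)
\textbf{Proof plan for Corollary \ref{c31}.} The strategy is simply to iterate the first assertion of Proposition \ref{p33} so as to push a large value of $|f_i|_F$ into an arbitrarily small neighborhood of $0$, while balancing the rate of shrinkage against the rate at which $\sup|f_i|_F$ diverges. Set
\[
M_i \defi \sup_{s\in\mathbb Z_p}|f_i(s)|_F,
\]
so by hypothesis $M_i\to\infty$. Since each $f_i$ is $(C,\alpha)$-good, Proposition \ref{p33} supplies a constant $C'>0$, depending only on $C,\alpha,p$, such that for every $n\in\mathbb N$,
\[
\sup_{s\in p^{n+1}\mathbb Z_p}|f_i(s)|_F\;\geq\;\sup_{s\in p^{n}\mathbb Z_p}|f_i(s)|_F\big/C'.
\]
Iterating from $n=0$ yields the uniform-in-$i$ estimate
\[
\sup_{s\in p^{n}\mathbb Z_p}|f_i(s)|_F\;\geq\;\frac{M_i}{(C')^{n}}.
\]

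Next I would choose a sequence $n_i\to\infty$ growing slowly enough that the lower bound still diverges. For instance, take $n_i \defi \lfloor \log M_i/(2\log C')\rfloor$ when $C'>1$ (when $C'\le 1$ the bound is monotone in $n$ and the choice is even simpler). Then $(C')^{n_i}\le M_i^{1/2}$, so
\[
\sup_{s\in p^{n_i}\mathbb Z_p}|f_i(s)|_F\;\geq\; M_i^{1/2}\;\longrightarrow\;\infty.
\]
Because $p^{n_i}\mathbb Z_p$ is compact and $f_i$ is continuous, the supremum is attained: pick $s_i\in p^{n_i}\mathbb Z_p$ realising it. Then $|s_i|_p\leq p^{-n_i}\to 0$, so $s_i\to 0$ in $\mathbb Z_p$, while $|f_i(s_i)|_F\geq M_i^{1/2}\to\infty$, as required.

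There is essentially no serious obstacle here; the argument is a soft consequence of Proposition \ref{p33}. The only mildly delicate point is the calibration of $n_i$: it must diverge to force $s_i\to 0$, but slowly enough that the geometric loss $(C')^{-n_i}$ does not cancel the growth of $M_i$. Any sublinear growth of $n_i$ with respect to $\log M_i/\log C'$ does the job, and I expect the rate $n_i=\lfloor \log M_i/(2\log C')\rfloor$ above to be the cleanest choice.
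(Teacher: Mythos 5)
Your argument is correct and follows the paper's proof in essence: both iterate the first assertion of Proposition~\ref{p33} to get $\sup_{p^{n}\mathbb Z_p}|f_i|_F\geq M_i/(C')^n$, then choose $n_i\to\infty$ slowly enough that $M_i/(C')^{n_i}\to\infty$. The only cosmetic differences are that you give an explicit formula for $n_i$ where the paper merely asserts existence, and you invoke compactness and continuity to realize the supremum exactly where the paper settles for a point within a factor of $2$; both choices are harmless.
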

\begin{proof}
Let $C'$ be the constant as in Proposition \ref{p33}. Since $\lim_{i\to\infty}\sup_{s\in\mathbb Z_p}|f_i(s)|_F=\infty,$ one can find a sequence $\{k_i\}$ in $\mathbb N$ such that $$k_i\to\infty\text{ and }\lim_{i\to\infty}\frac1{C'^{k_i}}\sup_{s\in\mathbb Z_p}|f_i(s)|_F=\infty.$$ By Proposition \ref{p33}, for each $i\in\mathbb N$, we have $$\sup_{s\in p^{k_i}\mathbb Z_p}|f(s)|_F\geq\sup_{s\in\mathbb Z_p}|f(s)|_F/C'^{k_i}.$$ Hence, for each $i\in\mathbb N$, there is $s_i\in p^{k_i}\mathbb Z_p$ such that $$|f(s_i)|_F\geq\sup_{s\in\mathbb Z_p}|f(s)|_F/(2C'^{k_i}).$$ Then the sequence $\{s_i\}_{i\in\mathbb N}$ satisfies our requirement.
\end{proof}

\section{Nondivergence}\label{nondiv}
In this section, we study the non-escape of mass of $S$-arithmetic periodic orbits. The arguments in this section will rely on the results in \cite{KM98,KT07,TW03}. Since $\mathbf G$ is a reductive $\mathbb Q$-group without non-trivial $\mathbb Q$-characters, we can embed $\mathbf G$ into $\SL_n$ for some $n\in\mathbb N$. 

\subsection{$(C,\alpha)$-good maps and Nondivergence}\label{nondivergence}
Let $X$ be a metric space with a locally finite measure $\mu$ and $S$ a finite set of places of $\mathbb Q$. A map $$\phi:(X,\mu)\to\SL_n(\mathbb Q_S),\quad x\in X\mapsto\phi(x)=(\phi_p(x))_{p\in S}\in\mathbf{SL}_n(\mathbb Q_S),$$ where $\phi_p(x)\in\SL_n(\mathbb Q_p)$ ($p\in S$), is called $(C,\alpha)$-good if there exist constants $C$ and $\alpha>0$ such that for every place $p\in S$, each entry of $\phi_p$ is a $(C,\alpha)$-good function from $X$ to $\mathbb Q_p$.

For any $p$ in the finite set $S$ of places of $\mathbb Q$, let $\Lie(\SL_n(\mathbb Q_p))$ denote the Lie algebra of $\SL_n(\mathbb Q_p)$, and let $\exp_p$ be the associated exponential map which is defined in a neighborhood of identity in $\Lie(\SL_n(\mathbb Q_p))$. We write $\exp_S$ for the product map $$(\exp_p)_{p\in S}:\prod_{p\in S}\Lie(\SL_n(\mathbb Q_p))\to\prod_{p\in S}\SL_n(\mathbb Q_p)$$ and $\log_S$ for the inverse of $\exp_S$, whenever these maps are defined. For any set $W$ in $\prod_{p\in S}\SL_n(\mathbb Q_p)$, we denote by $\log_S(W)$ the image of $W$ under the map $\log_S$. We write $\Lie(F)$ for the Lie algebra of a ($p$-adic) Lie group $F$.

Let $p\in S$ and $\{X_1^{p},X_2^{p},...,X_l^{p}\}$ a basis of the Lie algebra $\Lie(\mathbf H(\mathbb Q_p))$ in a small neighborhood of $0$ where the exponential map $\exp_p$ is defined. By Jordan decomposition, we can write $X_i^p=Y_i^p+Z_i^p$ where $Y_i^p$ is semisimple, $Z_i^p$ is nilpotent and $Y_i^p$ and $Z_i^p$ commute. Then by Proposition \ref{p31}, the map $s\mapsto\exp_p(sX_i^p)=\exp_p(sY_i^p)\exp_p(sZ_i^p)$ $(s\in\mathbb Z_p)$ is $(C,\alpha)$-good for some $C$ and $\alpha>0$. Consequently, by \cite[Corollary 3.3]{KT07}, the map $$\Phi_p:(s_1,s_2,...,s_l)\mapsto\exp_p(s_1X_1^p)\exp_p(s_2X_2^p)\cdots\exp_p(s_lX_l^p)$$ is $(C,\alpha)$-good. Since the map $\Phi_p$ is a local diffeomorphism from a neighborhood of $0$ in $\Lie(\mathbf H(\mathbb Q_p))$ to $\mathbf H(\mathbb Q_p)$, the Lebesgue measure on a neighborhood of $0$ in $\Lie(\mathbf H(\mathbb Q_p))$ maps to a measure on $\mathbf H(\mathbb Q_p)$ which is absolutely continuous to the Haar measure on $\mathbf H(\mathbb Q_p)$.

\label{p41}\label{p42}

Now we introduce the notation in \cite{KT07}. Let $\mathbb Q_S^n$ be the $\mathbb Q_S$-module $\prod_{p\in S}\mathbb Q_p^n=\{(x_p)_{p\in S}: x_p\in\mathbb Q_p^n\}.$ Denote by $\mathfrak M(\mathbb Q_S,\mathcal O_S,n)$ the space of discrete $\mathcal O_S$-modules in $\mathbb Q_S^n$, which can be identified as $$\left\{g^{-1}\Delta: g\in\GL_n(\mathbb Q_S), \Delta\text{ is a $\mathcal O_S$-submodule in }\mathcal O_S^n\right\}.$$ Note that any discrete $\mathcal O_S$-module in $\mathbb Q_S^n$ is a free $\mathcal O_S$-module. We write $\Omega_{S,n}$ for the space of lattices in $\mathbb Q_S^n$ defined by $$\Omega_{S,n}:=\left\{g^{-1}\mathcal O_S^n: g\in\GL_n(\mathbb Q_S)\right\}\cong\GL_n(\mathcal O_S)\backslash\GL_n(\mathbb Q_S).$$

For any $v\in S$, the valuation $|\cdot|_v$ induces a norm $\|\cdot\|_v$ on $\mathbb Q_v^n$. The metric on $\mathbb Q_S^n$, which is induced by the metrics $\|\cdot\|_v$ $(v\in S)$ as a product metric, will be denoted by $\|\cdot\|$. The content $c(x)$ of $x=(x_v)_{v\in S}\in\mathbb Q_S^n=\prod_{p\in S}\mathbb Q_p^n$ is defined to be the product of $\|x_v\|_v$ $(v\in S)$, and will be used in \S\ref{vcs}.

Let $$\GL_n^1(\mathbb Q_S)=\{x=(x_p)_{p\in S}\in\GL_n(\mathbb Q_S):\prod_p|\det(x_p)|_p=1\}.$$ Define the space of unimodular lattices in $\mathbb Q_S^n$ by $$\Omega_{S,n}^1:=\left\{\Delta\in\Omega_{S,n}:\cov(\Delta)=1\right\}\cong\GL_n^1(\mathcal O_S)\backslash\GL_n^1(\mathbb Q_S).$$ Here we refer to \cite{KT07} for the definition of the covolume $\cov(\Delta)$ of a discrete $\mathcal O_S$-module $\Delta$ in $\mathbb Q_S^n$.

As $\mathbf G$ is embedded into $\SL_n$ for some $n\in\mathbb N$, we have $$X_S=\mathbf G(\mathcal O_S)\backslash\mathbf G(\mathbb Q_S)\subset\GL_n^1(\mathcal O_S)\backslash\GL_n^1(\mathbb Q_S)$$ and $X_S$ can be identified with a subspace of $\mathcal O_S$-modules of covolume one in $\mathbb Q_S^n$. Note that the embedding above is a proper map.

In the following, we prove the non-divergence of the orbits $\Gamma\backslash\Gamma Hg^{-1}$ for $g\in\mathbf G(\mathbb Q_S)$, where $H$ is a subgroup of finite index in $\mathbf H(\mathbb Q_S)$. We take a neighborhood of identity $U^*=\prod_{p\in S} U_p$ in $H$ ($U_p\subset\mathbf H(\mathbb Q_p)$) such that the exponential map is defined on $U_p$ ($\forall p\in S$) and $U_p$ is a subgroup of $\mathbf H(\mathbb Q_p)$ ($\forall p\in S_f$). For convenience, in the proof of Theorem \ref{th44}, we denote by $\mu$ the Haar measure on $\mathbb R^l\times\prod_{p\in S_f}\mathbb Q_p^l$ ($l=\dim\mathbf H$). The proof is similar to that of \cite[Theorem 3.4]{EMS97}.

\begin{theorem}\label{th44}
Let $\mathbf G$ be a reductive algebraic group defined over $\mathbb Q$, which does not admit nontrivial $\mathbb Q$-characters. Let $\mathbf H$ be a reductive $\mathbb Q$-subgroup of $\mathbf G$ admitting no nontrivial $\mathbb Q$-characters. Let $S$ be a finite set of places of $\mathbb Q$ containing the Archimedean place, $H$ a subgroup of finite index in $\mathbf H(\mathbb Q_S)$ and $g\in\mathbf G(\mathbb Q_S)$. Assume that there exists a compact subset $K_0$ in $X_S=\Gamma_S\backslash\mathbf G(\mathbb Q_S)$ such that for any $h\in\mathbf G(\mathbb Q_S)$, $$\Gamma_S\backslash\Gamma_S U^*h^{-1}\cap K_0\neq\emptyset.$$ Then for any $\epsilon>0$, there exists a compact subset $K\subset X_S$ such that $$\mu_{\Gamma_S\backslash\Gamma_S Hg^{-1}}(K)\geq1-\epsilon.$$
\end{theorem}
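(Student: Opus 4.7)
The plan is to apply the quantitative nondivergence machinery of Kleinbock--Margulis in its $S$-arithmetic form \cite{KT07}, along the lines of \cite[Theorem 3.4]{EMS97}. Embedding $\mathbf{G}\hookrightarrow\SL_n$ identifies $X_S$ with a subset of $\Omega_{S,n}^1$, so by the $S$-arithmetic Mahler criterion a subset of $X_S$ is precompact iff all nonzero discrete $\mathcal{O}_S$-submodules of the underlying lattices have covolume bounded below. It therefore suffices, given $\epsilon>0$, to produce $\delta>0$ such that the $\mu_{\Gamma_S\backslash\Gamma_SHg^{-1}}$-measure of the ``thin part'', where some rational submodule of $\Gamma_Shg^{-1}\mathcal{O}_S^n$ has covolume below $\delta$, is at most $\epsilon$.

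First I would reduce the global estimate on $\Gamma_S\backslash\Gamma_SHg^{-1}$ to a local one on translates of $U^*$. Decompose $H=\bigsqcup_i h_iU^*$ into right cosets of $U^*$ (possible since $U^*$ is open). For each $i$, apply the hypothesis to $h=gh_i^{-1}$ to obtain $\gamma_i\in\Gamma_S$ and $u_i\in U^*$ with $\gamma_iu_ih_ig^{-1}\in K_0$; after replacing $h_i$ by $u_ih_i$ within its coset, the base point $x_i:=\gamma_ih_ig^{-1}$ lies in the fixed compact set $K_0$. Thus each chunk of the orbit may be written as $\Phi(U^*)x_i$ with a uniformly controlled base point, where $\Phi$ is the product-of-exponentials parametrization of $U^*$ discussed just before Theorem \ref{th44}.

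Next I would pass to local coordinates. The map $\Phi:B\to\mathbf{H}(\mathbb{Q}_S)$, with $B$ a small ball in $\mathbb{R}^l\times\prod_{p\in S_f}\mathbb{Q}_p^l$ and $l=\dim\mathbf{H}$, is $(C,\alpha)$-good by Proposition \ref{p31} together with \cite[Corollary 3.3]{KT07}, and its pushforward of Lebesgue measure is absolutely continuous with respect to Haar on $U^*$. For each nonzero rational $\mathcal{O}_S$-submodule $\Delta\subset\mathcal{O}_S^n$ spanning a subspace $V$ of dimension $k$, the map
\begin{equation*}
s\longmapsto \Phi(s)x_i\cdot\bigwedge\nolimits^{\!k}\!\Delta\in\bigwedge\nolimits^{\!k}\mathbb{Q}_S^n
\end{equation*}
is still coordinatewise $(C,\alpha)$-good with constants independent of $i$ and $\Delta$. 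The $S$-arithmetic Kleinbock--Tomanov nondivergence theorem then yields, for every $\delta>0$,
\begin{equation*}
\mu\!\bigl(\{s\in B:\exists\,\Delta,\ \cov(\Phi(s)x_i\cdot\Delta)<\delta\}\bigr)\leq C'\bigl(\delta/\rho_i\bigr)^\alpha\mu(B),
\end{equation*}
provided $\sup_{s\in B}\cov(\Phi(s)x_i\cdot\Delta)\geq\rho_i$ for every $\Delta$.

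The main difficulty is securing a uniform lower bound $\rho_i\geq\rho>0$. This is where both $\mathbb{Q}$-character hypotheses enter. If $V$ is $\mathbf{H}$-invariant, then $\mathbf{H}$ acts on $\bigwedge^{k}V$ by a $\mathbb{Q}$-character, which must be trivial by hypothesis, so $\cov(\Phi(s)x_i\cdot\Delta)$ is constant in $s$ and equal to $\cov(x_i\cdot\Delta)$. If $V$ is not $\mathbf{H}$-invariant, some generator $X_j^p$ of $\Lie(\mathbf{H}(\mathbb{Q}_p))$ moves $\bigwedge^{k}V$ nontrivially, and the $(C,\alpha)$-good property combined with the weak intermediate value theorem (Proposition \ref{p33}) produces $s\in B$ where the covolume is at least a definite multiple of $\cov(x_i\cdot\Delta)$. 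Either way, $\rho_i\geq c\cdot\cov(x_i\cdot\Delta)$ for a universal $c>0$, and since $x_i\in K_0$ is compact the quantities $\cov(x_i\cdot\Delta)$ are bounded below uniformly in $i$ and $\Delta$ by a positive constant depending only on $K_0$. Summing the local estimates over the cosets $h_iU^*$, weighted by their Haar measure on $H$, and choosing $\delta$ sufficiently small delivers the desired compact $K$ with $\mu_{\Gamma_S\backslash\Gamma_SHg^{-1}}(K)\geq 1-\epsilon$.
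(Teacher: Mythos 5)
Your overall strategy is the same as the paper's: reduce the global nondivergence statement to local $(C,\alpha)$-good estimates on small boxes, use the $K_0$ hypothesis to pin down a uniform lower bound on the covolume supremum, and apply the Kleinbock--Tomanov quantitative nondivergence theorem. However, there are two genuine issues with the execution.

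First, the decomposition $H=\bigsqcup_i h_iU^*$ into right cosets is not available. Recall from \S\ref{nondivergence} that $U^*=\prod_{p\in S}U_p$ with $U_p$ a compact open \emph{subgroup} only for $p\in S_f$; the Archimedean factor $U_\infty$ is merely a neighborhood of identity on which $\exp_\infty$ is defined, not a subgroup of $\mathbf{H}(\mathbb{R})$. So $H$ is not partitioned by cosets of $U^*$. Moreover, even with a cover rather than a partition, the periodic measure $\mu_{\Gamma_S\backslash\Gamma_SHg^{-1}}$ is the projection of Haar measure restricted to a fundamental domain $\Omega$ for $\Gamma_S\cap H$ in $H$, so an estimate summed over translates of $U^*$ covering all of $H$ does not directly control the quantity you need. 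The paper resolves both points at once: choose a compact $W\subset\Omega$ with $\mu_H(W)\geq(1-\epsilon)\mu_H(\Omega)$, cover $W$ by \emph{finitely many} translates $B_i=U^*h_i$, and sum the resulting finite family of local estimates. You should adopt this reduction; without it the summation step at the end of your argument is not justified.

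Second, the case analysis on whether the subspace $V$ spanned by $\Delta$ is $\mathbf{H}$-invariant is unnecessary, and the non-invariant branch as stated doesn't quite do what you claim. Once the base point $x_i$ of a chunk lies in $K_0$, the corresponding lattice $gh_i^{-1}\mathcal{O}_S^n$ lies in a fixed compact subset of $\Omega_{S,n}^1$, so there is $\rho>0$ depending only on $K_0$, $n$, $S$ such that $\cov(gh_i^{-1}\Delta)\geq\rho$ for \emph{every} submodule $\Delta$; since the parameter $s$ corresponding to $x_i$ lies in the domain $B$, one has $\sup_{s\in B}\cov(\cdots)\geq\rho$ immediately, with no need to appeal to the $(C,\alpha)$-good property or Proposition \ref{p33} at this stage. (Note also that $(C,\alpha)$-goodness controls the measure of the small-value set but does not by itself \emph{produce} points of large value, and Proposition \ref{p33} propagates suprema from larger to smaller balls, not from a single value upward; neither is the right tool for the lower bound here.) The paper extracts $\rho$ directly from \cite[Theorem 8.8, Lemma 9.2, Lemma 8.6]{KT07} using exactly this observation. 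The $\mathbb{Q}$-character hypotheses on $\mathbf{G}$ and $\mathbf{H}$ enter only implicitly, to ensure $\Gamma_S$ is a lattice and that the periodic measure exists; the heavy lifting in proving that a suitable $K_0$ exists (your invariant-vs.-not-invariant dichotomy is the right intuition there) is deferred to Theorem \ref{scmth41}, which is a separate statement.
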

\begin{proof}
Let $\Omega$ be a fundamental domain of $\Gamma_S\backslash\Gamma_S H$ in $H$, $\mu_H$ the Haar measure on $H$, and let $\delta_0$ be as in Proposition \ref{p31}. Let $W$ be a compact subset in $\Omega$ such that 
\begin{align*}
\mu_H(W)/\mu_H(\Omega)\geq1-\epsilon
\end{align*}
We can construct a finite open cover $\mathcal U$ of $W$ in $H$, which consists of translates of $U^*$ $\{B_i\}_{i=1}^N=\{U^*h_i\}_{i=1}^N$, and here it is harmless to assume that the diameter of $U^*$ is equal to $\delta_0>0$. Note that the number $N$ depends only on $\Omega$ and $\epsilon>0$.
 
By the discussion in this section, for any $B_i$ ($1\leq i\leq N$), there is an open ball $U_i$ of radius at most $\delta_0>0$ in $\mathbb R^l$ such that one can find a $(C,\alpha)$-good map $$\phi_i:U_i\times\prod_{p\in S_f}\mathbb Z_p^l\to B_i$$ where $C$ and $\alpha$ are uniform for all $1\leq i\leq N$. Note that for each $i=1,2,\dots,N$, the map $\phi_i(s)^{-1}$ ($s\in U_i\times\prod_{p\in S_f}\mathbb Z_p^l$), which is the composition of $\phi_i$ with the inversion in the group $H$, is also $(C,\alpha)$-good. 

Now we prove the non-escape of mass of $\mu_{\Gamma\backslash\Gamma Hg^{-1}}$. We know that there is a compact subset $K_0$ in $\Gamma_S\backslash\mathbf G(\mathbb Q_S)$ such that $\Gamma_S\backslash\Gamma_SB_ig^{-1}$ intersects $K_0$ for every $1\leq i\leq N$. By \cite[Theorem 8.8, Lemma 9.2 and Lemma 8.6]{KT07}, this implies that there exists $\rho>0$ depending only on $K_0$, $n$ and $S$ such that for any $1\leq i\leq N$, and for any submodule $\Delta\subset\mathcal O_S^n$ $$\sup_{s\in U_{i}\times\prod_{p\in S_f}\mathbb Z_p^l}\cov(g\phi_{i}(s)^{-1}\Delta)\geq\rho.$$ Then we apply \cite[Theorem 9.3]{KT07} and for any $\epsilon>0$, there exists a compact subset $Q_\epsilon$ such that for each $i=1,2,\dots,N$ 
\begin{align*}
\mu(\{s\in U_i\times\prod_{p\in S_f}\mathbb Z_p^l:g\phi_i(s)^{-1}\mathcal O_S^n\notin Q_\epsilon\})\leq\frac\epsilon N\mu(U_i\times\prod_{p\in S_f}\mathbb Z_p^l).
\end{align*}
Now we compute
\begin{align*}
&\mu_{H}(\{s\in\Omega:\Gamma_S sg^{-1}\notin Q_\epsilon\})\\
\leq&\sum_{i=1}^N\mu_{H}(\{s\in B_i: \Gamma_Ssg^{-1}\notin  Q_\epsilon\})+\mu_H(\Omega-W)\\
\leq&C_1\sum_{i=1}^N\mu(\{s\in U_i\times\prod_{p\in S_f}\mathbb Z_p^l:g\phi_i(s)^{-1}\mathcal O_S^n\notin Q_\epsilon\})+\epsilon\\
\leq&\frac{C_1\epsilon}N\sum_{i=1}^N\mu\left(U_i\times\prod_{p\in S_f}\mathbb Z_p^l\right)+\epsilon\leq C\epsilon\mu_{H}(\Omega)+\epsilon=(C+1)\epsilon
\end{align*}
where $C_1$ and $C$ depend only on $\Omega$, $\epsilon>0$ and the local diffeomorphisms $\phi_i$ $(i=1,2,...,N)$. Hence we have $\mu_{\Gamma_S\backslash\Gamma_SHg^{-1}}(Q_\epsilon^c)\leq C\epsilon$, for some constant $C>0$. This completes the proof of the theorem.
\end{proof}

\subsection{Translates of a maximal torus visiting a compact subset}\label{vcs}
Let $\mathbf H\subset\mathbf G$ be connected reductive algebraic groups defined over $\mathbb Q$ without nontrivial $\mathbb Q$-characters and suppose that $\mathbf H$ contains a maximal torus of $\mathbf G$. Let $H$ be a subgroup of finite index in $\mathbf H(\mathbb Q_S)$. Let $U^*=\prod_{p\in S} U_p$ in $H$ ($U_p\subset\mathbf H(\mathbb Q_p)$) be defined as in \S\ref{nondivergence}, which is a neighborhood of identity in $H$ such that the exponential map is defined on $U_p$ ($\forall p\in S$) and $U_p$ is a subgroup of $\mathbf H(\mathbb Q_p)$ ($\forall p\in S_f$). In this subsection, we prove the following theorem.

\begin{theorem}\label{scmth41}
There exists a compact subset $K$ in $\Gamma_S\backslash\mathbf G(\mathbb Q_S)$ such that for any $g\in\mathbf G(\mathbb Q_S)$ $$\Gamma_S\backslash\Gamma_SU^*g\cap K\neq\emptyset.$$
\end{theorem}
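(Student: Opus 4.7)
The plan is to deduce Theorem~\ref{scmth41} via an $S$-arithmetic Mahler compactness argument, in the spirit of Tomanov--Weiss. First embed $\mathbf G\hookrightarrow\SL_n$ so that $X_S=\Gamma_S\backslash\mathbf G(\mathbb Q_S)$ sits properly in the space $\Omega_{S,n}^1$ of unimodular $\mathcal O_S$-modules in $\mathbb Q_S^n$. By the $S$-arithmetic Mahler criterion \cite{KT07}, a subset of $\Omega_{S,n}^1$ is relatively compact if and only if the content $c(v)$ of the nonzero lattice vectors is bounded below uniformly. It therefore suffices to exhibit a constant $\rho>0$ such that for every $g\in\mathbf G(\mathbb Q_S)$ there is $u\in U^*$ with $c(ugv)\geq\rho$ for every $v\in\mathcal O_S^n\setminus\{0\}$.

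The selection of such a $u$ is handled by the non-divergence theorem \cite[Theorem 9.3]{KT07}, applied as in the proof of Theorem~\ref{th44} to the $(C,\alpha)$-good parametrization of $U^*$ by $\exp_S$ composed with right translation by $g$. The input required by that theorem is the minimax bound
\[
\sup_{u\in U^*}c(ugv)\geq\rho_0,
\]
for a single $\rho_0>0$, uniform in $g\in\mathbf G(\mathbb Q_S)$ and $v\in\mathcal O_S^n\setminus\{0\}$; granting this, the theorem outputs the desired $u$ (with a possibly smaller constant) and hence a compact $K$ meeting every orbit $\Gamma_S U^* g$. The crux is thus to verify the minimax inequality.

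To verify it, one uses the hypothesis that $\mathbf H$ contains a maximal torus $\mathbf T$ of $\mathbf G$. Over an algebraic closure of each $\mathbb Q_p$, the representation decomposes $\mathbb Q_S^n$ into $\mathbf T$-weight spaces $\bigoplus_\chi V_\chi$. For any nonzero $w=gv$, at least one weight component $w_\chi$ is nonzero; in exponential coordinates on $U^*$, the function $u\mapsto c(uw)$ is $(C,\alpha)$-good by Proposition~\ref{p31}, and the elements of $U^*\cap\mathbf T(\mathbb Q_S)$ dilate $w_\chi$ by the character value. Combining a compactness argument on the projective space $\mathbb P(\mathbb Q_S^n)$ with the lower bound $c(gv)\geq c(v)\prod_{p\in S}|\det g|_p$, where $c(v)$ is bounded below for $v\in\mathcal O_S^n\setminus\{0\}$ by the $S$-arithmetic Minkowski bound and $\prod_p|\det g|_p=1$ since $\mathbf G$ has no nontrivial $\mathbb Q$-characters, one obtains the required uniform lower bound on $\sup_{u\in U^*}c(uw)$.

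The main obstacle is the last step: controlling the weight decomposition \emph{uniformly} across all places $p\in S$ and across Galois conjugates of weights defined only over an extension, so that the bound survives descent back to $\mathbb Q_S$. The fact that $\mathbf T$ is \emph{maximal} in $\mathbf G$ should be essential here, since its weights span the full character lattice and in particular no nonzero vector $w\in\mathbb Q_S^n$ can have all weight components simultaneously invariant under $U^*\cap\mathbf T(\mathbb Q_S)$, which is the only way the minimax bound could fail.
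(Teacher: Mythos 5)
Your high-level strategy --- embed in $\SL_n$, reduce to a uniform lower bound $\sup_{u\in U^*}c(ugv)\geq\rho_0$, and then feed this into \cite[Theorem 9.3]{KT07} --- is exactly the paper's strategy, and the observation that the whole difficulty is the minimax bound is correct. But the argument you give for that bound has a concrete false step: the inequality $c(gv)\geq c(v)\prod_{p\in S}|\det g|_p$ is simply not true. Take $g=\mathrm{diag}(t,t^{-1})$ with $t$ large and $v=(0,1)$: then $\det g=1$ while $c(gv)=t^{-1}\to 0$. The determinant controls the covolume of the full lattice $g\mathcal O_S^n$, not the length of any individual vector, so the bound you want cannot come from unimodularity plus Minkowski. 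The subsequent "compactness argument on projective space" does not rescue this, because $g$ ranges over an unbounded group and the vector $gv$ genuinely can become short; that is the entire phenomenon non-divergence theorems must overcome, not something that can be dismissed.

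What the paper actually does to obtain the minimax bound is structurally different and uses two ingredients your proposal omits. First, it proves (Proposition \ref{scp41}) a decomposition $\mathbf G(\mathbb Q_p)=Y_p\mathbf T(\mathbb Q_p)$ in which the set $Y_p$ has the property that $\sup_{t\in\Omega}\|yt.v\|\geq c\|v\|$ for all $y\in Y_p$; this comes from an Iwasawa decomposition $\mathbf G(F)=K_FU_F\mathbf T(F)$ over a splitting field $F$, a Galois-norm construction of a set $S\subset\mathbf T(F)$, and a $(C,\alpha)$-good argument for the unipotent part (Lemmas \ref{scl41}, \ref{scl42}). Second, and crucially, it uses that $\mathbf T$ is $\mathbb Q$-anisotropic: writing $g=(t_py_p^{-1})_p$, the $\mathbf T$-components $(t_p)_p$ can be replaced, modulo $\Gamma_S$, by an element of a \emph{compact} fundamental domain of $\mathbf T(\mathcal O_S)\backslash\mathbf T(\mathbb Q_S)$, so the torus contribution to the content is uniformly controlled. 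Your sketch invokes maximality of $\mathbf T$ but never anisotropy, and without it there is no way to absorb the torus part of $g$. So the gap is real: the minimax inequality needs the decomposition and descent argument of Proposition \ref{scp41}, not the determinant-based bound.
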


We know that $\mathbf H$ contains a maximal torus in $\mathbf G$. It implies that $\mathbf H$ contains a maximal torus $\mathbf T$ of $\mathbf G$ which is defined and anisotropic over $\mathbb Q$ \cite{PR}. In the rest of this section, we will fix this maximal torus $\mathbf T$. Now we take a small neighborhood of identity $\Omega$ in $\mathbf T(\mathbb Q_S)$ such that $\Omega\subset U^*$. Then it suffices to prove the following

\begin{theorem}\label{scth41}
There exists a compact subset $K\subset\Gamma_S\backslash\mathbf G(\mathbb Q_S)$ such that for any $g$ in $\mathbf G(\mathbb Q_S)$ $$\Gamma_S\backslash\Gamma_S\Omega g\cap K\neq\emptyset.$$
\end{theorem}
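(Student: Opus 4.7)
The plan is to apply the $S$-arithmetic nondivergence theorem of Kleinbock--Tomanov (\cite[Theorem~9.3]{KT07}) to a $(C,\alpha)$-good parametrization of $\Omega$ by $\Lie(\mathbf T)$. Embed $\mathbf G\hookrightarrow\mathbf{SL}_n$ over $\mathbb Q$, so that $X_S$ becomes a closed subset of the space $\Omega_{S,n}^1$ of unimodular $\mathcal O_S$-lattices in $\mathbb Q_S^n$. As in \S\ref{nondivergence}, pick a $\mathbb Q$-basis $X_1,\dots,X_l$ of $\Lie(\mathbf T)$ and form the map
\[
\Phi(s_1,\dots,s_l)=\prod_{p\in S}\exp_p(s_1X_1^p)\cdots\exp_p(s_lX_l^p)
\]
on a small box $B_0\subset\mathbb R^l\times\prod_{p\in S_f}\mathbb Z_p^l$ whose image lies in $\Omega$. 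By Proposition~\ref{p31} together with \cite[Corollary~3.3]{KT07}, the entries of $s\mapsto g\Phi(s)^{-1}$ are coordinate-wise $(C,\alpha)$-good on $B_0$, with constants $C,\alpha$ independent of $g\in\mathbf G(\mathbb Q_S)$.

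By \cite[Theorem~9.3]{KT07} applied to this family, producing a compact $K\subset X_S$ with $\Gamma_S\backslash\Gamma_S\Omega g\cap K\neq\emptyset$ for every $g$ will follow once I verify the quantitative non-degeneracy: there exists $\rho>0$ such that for every $g\in\mathbf G(\mathbb Q_S)$ and every nonzero primitive $\mathcal O_S$-submodule $\Delta\subset\mathcal O_S^n$,
\[
\sup_{s\in B_0}\cov\bigl(g\Phi(s)^{-1}\Delta\bigr)\geq\rho.
\]
Via the Pl\"ucker embedding, this is equivalent to a uniform-in-$g$ lower bound on $\sup_{s\in B_0}c\bigl(\rho_k(g\Phi(s)^{-1})v_\Delta\bigr)$, where $\rho_k:\mathbf G\to\mathbf{GL}(\wedge^k\mathbb Q^n)$ is the $k$-th exterior power ($k=\operatorname{rk}\Delta$) and $v_\Delta\in\wedge^k\mathcal O_S^n$ is the primitive integral vector representing $\Delta$.

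The main obstacle is verifying this uniform lower bound, and here both the $\mathbb Q$-anisotropy and the maximality of $\mathbf T$ play decisive roles. The anisotropy gives, as an $S$-arithmetic Dirichlet unit theorem, that $\mathbf T(\mathcal O_S)\backslash\mathbf T(\mathbb Q_S)$ is compact; consequently the content of $\rho_k(t)v$ is bounded above and below uniformly in $t\in\mathbf T(\mathbb Q_S)$ by positive constants depending only on the integral vector $v$. The maximality of $\mathbf T$ ensures that the $\mathbf T$-orbit of any nonzero $v$ is Zariski-dense in a $\mathbf G$-stable subvariety, so that it cannot be collapsed uniformly to $0$ by left multiplication by a single $g$. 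I would combine these two algebraic facts with the $(C,\alpha)$-good behaviour of $s\mapsto\rho_k(g\Phi(s)^{-1})v_\Delta$ to extract a point $s\in B_0$ at which the content is at least a universal $\rho$, in the spirit of \cite[Theorem~3.4]{EMS97} in the real case but using the $S$-arithmetic toolkit of \cite{KT07,TW03} throughout. Finally, one upgrades the pointwise bound to a constant $\rho$ uniform in $v_\Delta$ by reducing modulo the $\Gamma_S$-action on primitive integral vectors in each $\wedge^k\mathbb Q^n$, which leaves only finitely many orbit types to control.
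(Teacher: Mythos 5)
Your high-level strategy (embed $\mathbf G$ in $\mathbf{SL}_n$, parametrize $\Omega$ by a $(C,\alpha)$-good map, apply \cite[Theorem~9.3]{KT07}) matches the paper's. However, the heart of the proof — the uniform-in-$g$ non-degeneracy estimate $\sup_{s\in B_0}\cov(g\Phi(s)^{-1}\Delta)\geq\rho$ — is where your argument breaks down.

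The claim that ``the content of $\rho_k(t)v$ is bounded above and below uniformly in $t\in\mathbf T(\mathbb Q_S)$ by positive constants depending only on $v$'' is false. Compactness of $\mathbf T(\mathcal O_S)\backslash\mathbf T(\mathbb Q_S)$ does not imply that $\mathbf T(\mathbb Q_S)$ is compact (it is typically non-compact, e.g.\ a real quadratic norm-one torus in $\mathbf{SL}_2$), and the arithmetic group $\mathbf T(\mathcal O_S)$ acts with unbounded stretch on a fixed vector $v$ in a generic representation. At best, for $t\in\mathbf T(\mathcal O_S)$ and $v_\Delta$ integral, you get the trivial lower bound $c(\rho_k(t)v_\Delta)\geq 1$ since $t\Delta\subset\mathcal O_S^n$; no upper bound holds, and no two-sided bound holds for arbitrary $t\in\mathbf T(\mathbb Q_S)$. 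The subsequent appeal to ``Zariski-density of the $\mathbf T$-orbit in a $\mathbf G$-stable subvariety'' is not a quantitative statement and does not yield a $\rho>0$ independent of $g$: for each individual $g$ the supremum over $\Omega$ is nonzero, but you need it bounded away from zero as $g$ ranges over all of $\mathbf G(\mathbb Q_S)$. The final passage to ``finitely many $\Gamma_S$-orbit types'' does not rescue this, because $g$ is unconstrained and does not reduce modulo $\Gamma_S$ in the right variable.

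The missing ingredient is a structural decomposition $\mathbf G(\mathbb Q_p)=Y_p\,\mathbf T(\mathbb Q_p)$ together with the key expansion property: $\sup_{t\in\Omega}\|yt.v\|\geq c\|v\|$ for all $y\in Y_p$ and all $v$, in every linear representation of $\mathbf G(\mathbb Q_p)$. This is the paper's Proposition~\ref{scp41}, whose proof passes to a Galois extension $F$ that splits $\mathbf T$, uses the Iwasawa decomposition $\mathbf G(F)=K_F U_F\mathbf T(F)$, isolates the norm-one subgroup $S=\{x:\prod_i\sigma_i(x)=e\}$ of $\mathbf T(F)$, and then establishes two lemmas: a $(C,\alpha)$-good bound for the $U_F$-part via the root-space decomposition (this is where maximality of $\mathbf T$ enters), and a Galois-averaging bound showing norm-one elements cannot contract. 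With this in hand, the paper writes $g=(t_p y_p^{-1})_{p}$, absorbs the $\mathbf T(\mathbb Q_S)$-component into a fixed compact fundamental domain using anisotropy and the $\Gamma_S$-action, and gets the required covolume bound from the expansion property of $Y_p$ on exterior powers. Your proposal gestures at the correct inputs (anisotropy, maximality, $S$-arithmetic nondivergence) but does not supply the decomposition and the expansion lemma that actually convert them into the needed uniform estimate.
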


In order to prove Theorem \ref{scth41}, we need the following proposition.

\begin{proposition}\label{scp41}
There exists a subset $Y_p$ of $\mathbf G(\mathbb Q_p)$ such that 
\begin{enumerate}
\item $\mathbf G(\mathbb Q_p)=Y_p\mathbf T(\mathbb Q_p)$
\item For any linear representation of $\mathbf G(\mathbb Q_p)$ on a vector space $V$ over $\mathbb Q_p$, there exists a constant $c>0$ such that $$\sup_{t\in\Omega}\|yt.v\|\geq c\|v\|,\quad\forall v\in V,\quad\forall y\in Y_p.$$
\end{enumerate}
\end{proposition}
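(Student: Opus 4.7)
The plan combines a representation-theoretic estimate (via the $(C,\alpha)$-good function technology of Section~\ref{ca}) with a structural construction of $Y_p$ based on a Cartan-type decomposition of $\mathbf G(\mathbb Q_p)$.

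\textit{Step 1 (weight analysis).} Given a representation $\rho\colon \mathbf G \to \GL(V)$ over $\mathbb Q_p$, let $F/\mathbb Q_p$ be a finite extension splitting $\mathbf T$ and decompose $V_F = V\otimes_{\mathbb Q_p} F = \bigoplus_\chi V_\chi$ into $\mathbf T$-weight spaces, writing $v=\sum_\chi v_\chi$. Then
\[
\rho(yt)\,v \;=\; \sum_\chi \chi(t)\,\rho(y)\,v_\chi.
\]
Parametrising $t = \exp(s_1 X_1)\cdots\exp(s_r X_r)$ in a basis $X_1,\dots,X_r$ of $\Lie\mathbf T(\mathbb Q_p)$, each coordinate of $\rho(yt)v$ in a fixed basis of $V$ becomes an $F$-linear combination of exponentials in the $s_i$. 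Restricting to one-dimensional slices, Proposition~\ref{p31} shows each such function is $(C,\alpha)$-good, and the Vandermonde identity of Lemma~\ref{l31} lets me choose finitely many $t_1,\dots,t_k\in\Omega$ so that the weight matrix $(\chi_i(t_j))$ is invertible with a uniformly bounded inverse. Inverting expresses every $\rho(y)v_\chi$ as a bounded linear combination of the samples $\rho(yt_j)v$, yielding a constant $c_1=c_1(V,\Omega)>0$ such that
\[
\sup_{t\in\Omega}\|\rho(yt)v\| \;\geq\; c_1 \max_\chi \|\rho(y)v_\chi\|,\qquad \forall\,y\in\mathbf G(\mathbb Q_p),\ v\in V.
\]
This estimate is valid for every $y\in\mathbf G(\mathbb Q_p)$; the choice of $Y_p$ enters only in the next step.

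\textit{Step 2 (constructing $Y_p$).} It suffices to produce $Y_p$ with $\mathbf G(\mathbb Q_p)=Y_p\,\mathbf T(\mathbb Q_p)$ such that, in every representation $V$, the operators $\rho(y)$ for $y\in Y_p$ have uniformly bounded inverse norm; together with the trivial inequality $\max_\chi\|v_\chi\|\gtrsim\|v\|$, this gives $\max_\chi\|\rho(y)v_\chi\|\geq c_2\|v\|$ and closes the argument. To this end, fix a good maximal compact subgroup $K_p\subset\mathbf G(\mathbb Q_p)$ and a maximal $\mathbb Q_p$-split torus $\mathbf A_p\subset\mathbf T$ (such an $\mathbf A_p$ exists because $\mathbf T$ is maximal in $\mathbf G$), and use the Cartan decomposition $\mathbf G(\mathbb Q_p)=K_p\,\mathbf A_p(\mathbb Q_p)\,K_p$. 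The centraliser $Z_{\mathbf G}(\mathbf A_p)$ is a reductive $\mathbb Q_p$-subgroup containing $\mathbf T$ whose derived group is anisotropic over $\mathbb Q_p$; consequently $Z_{\mathbf G}(\mathbf A_p)^{\mathrm{der}}(\mathbb Q_p)$ is compact (by the classical compactness of $\mathbb Q_p$-points of anisotropic semisimple $\mathbb Q_p$-groups). Absorbing this compact piece into $K_p$ to get a larger compact set $K_p'$ and using $\mathbf A_p\subset\mathbf T$ upgrades the Cartan decomposition to $\mathbf G(\mathbb Q_p)=K_p'\,\mathbf T(\mathbb Q_p)\,K_p'$. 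I then take $Y_p$ to be the set of elements of the form $k_1(tk_2t^{-1})$ arising from pushing the central $\mathbf T$-factor to the right, so that every $y\in Y_p$ is a product of a fixed-compact-group element with a $\mathbf T$-conjugate of a fixed-compact-group element. Because conjugation by $t\in\mathbf T$ preserves operator norm on each weight space (it acts on $V_\chi$ by the scalar $\chi(t)$, which cancels in $\rho(t)\rho(k)\rho(t)^{-1}$), the resulting $\rho(y)$ has norm and inverse norm uniformly bounded in any $V$.

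\textit{Main obstacle.} Step 1 is a routine consequence of the machinery in Section~\ref{ca}; the real work is Step 2, specifically the replacement of $\mathbf A_p$ by $\mathbf T$ in the Cartan decomposition at the cost of only a compact enlargement of $K_p$. This hinges on the anisotropy-over-$\mathbb Q_p$ of the derived subgroup of $Z_{\mathbf G}(\mathbf A_p)$ and on checking that the resulting $Y_p$ satisfies the uniform inverse-operator-norm bound in \emph{every} representation, not just a single faithful one.
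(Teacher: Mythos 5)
Your factorisation strategy $\mathbf G(\mathbb Q_p)=Y_p\mathbf T(\mathbb Q_p)$ followed by a uniform lower bound is the right shape, and Step~1 is a plausible use of the $(C,\alpha)$-good technology, but Step~2 contains a genuine error precisely at the point you flag as the real work. You claim that for $y=k_1(tk_2t^{-1})$ with $k_1,k_2$ in a fixed compact set and $t\in\mathbf T(\mathbb Q_p)$, the operator $\rho(y)$ has uniformly bounded inverse norm because ``conjugation by $t\in\mathbf T$ preserves operator norm on each weight space \ldots\ $\chi(t)$ cancels in $\rho(t)\rho(k)\rho(t)^{-1}$.'' It does not cancel. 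Writing $\rho(k)$ in block form with respect to the weight decomposition, the $(\chi,\chi')$-block of $\rho(t)\rho(k)\rho(t)^{-1}$ is $\chi(t)\chi'(t)^{-1}$ times the $(\chi,\chi')$-block of $\rho(k)$; the cancellation you describe occurs only for $\chi=\chi'$, i.e.\ only if $\rho(k)$ already preserved each weight space, which it does not for generic $k$ in a maximal compact. Concretely, take $\mathbf G=\SL_2$ with $\mathbf T(\mathbb Q_p)$ conjugate to the diagonal, $V=\mathbb Q_p^2$ the standard representation, $k_2=\smallmat{0&-1\\1&0}$, $t=\diag(p^{-n},p^n)$: then $tk_2t^{-1}=\smallmat{0&-p^{-2n}\\p^{2n}&0}$ sends $e_1$ to $(0,p^{2n})$, whose $p$-adic norm is $p^{-2n}\to 0$. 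So $\rho(y)$ contracts arbitrarily strongly, the inverse-norm bound fails, and with it the conclusion of Step~1 applied to $y\in Y_p$.

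For contrast, the paper sidesteps this by using the Iwasawa decomposition $\mathbf G(F)=K_FU_F\mathbf T(F)$ over a splitting field $F$ rather than the Cartan decomposition, and setting $Y_p=(KU_FS)\cap\mathbf G(\mathbb Q_p)$ where $S=\{x\in\mathbf T(F):\sigma_1(x)\cdots\sigma_n(x)=e\}$ is the norm-one subgroup for $\Gal(F/\mathbb Q_p)$. The decisive structural difference is that the left factor $KU_FS$ is compact $\times$ unipotent $\times$ norm-one-torus, and each factor is controlled by a separate mechanism: $K$ is harmless; the unipotent $u$ is handled by Lemma~\ref{scl41}, which exploits that $t\mapsto(t^{-1}ut).w$ is a polynomial map equal to $w$ at $t=e$ and hence, being $(C,\alpha)$-good, cannot be small on all of $\Omega$; and the torus element $s$ is handled by the Galois-norm trick of Lemma~\ref{scl42}, using $\prod_i\sigma_i(s)=e$. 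Your Cartan-based $Y_p$ has no analogue of the unipotent/polynomial mechanism to absorb the large-$t$ conjugates, so Step~2 would need to be replaced by a construction of this Iwasawa type.
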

\begin{proof}
The case $p=\infty$ is essentially proved in \cite[Proposition 4.4]{EMS97}. Here we give a proof for the $p$-adic case. Let $F$ be a Galois extension of $\mathbb Q_p$ such that $\mathbf T$ is $F$-split. By Iwasawa decomposition, there exists a compact subgroup $K_F$ of $\mathbf G(F)$ and a unipotent subgroup $U_F$ of $\mathbf G(F)$ such that $$G(F)=K_FU_F\mathbf T(F).$$ Let $\{\sigma_1,\sigma_2,...,\sigma_n\}=\textup{Gal}(F/\mathbb Q_p)$ and define $$S=\{x\in\mathbf T(F):\sigma_1(x)\sigma_2(x)...\sigma_n(x)=e\}.$$

We prove that $S\mathbf T(\mathbb Q_p)$ is a subgroup of finite index in $\mathbf T(F)$. We know that $\mathbf T(\mathbb Q_p)^n$ is a subgroup of finite index in $\mathbf T(\mathbb Q_p)$. Let $$E=\{x\in\mathbf T(F):\sigma_1(x)\sigma_2(x)\cdots\sigma_n(x)\in\mathbf T(\mathbb Q_p)^n\}.$$ Then $E$ is a subgroup of finite index in $\mathbf T(F)$. Now for any $x\in E$, there exists $w\in\mathbf T(\mathbb Q_p)$ such that $$\sigma_1(x)\sigma_2(x)\cdots\sigma_n(x)=w^n$$ and hence $w^{-1}x\in S$, $x\in S\mathbf T(\mathbb Q_p)$. This implies that $E=S\mathbf T(\mathbb Q_p)$ and $S\mathbf T(\mathbb Q_p)$ is a subgroup of finite index in $\mathbf T(F)$.

Let $\Lambda$ be a finite set of representatives of $\mathbf T(F)/(S\mathbf T(\mathbb Q_p))$ in $\mathbf T(F)$. Since $\Lambda$ normalizes $U_F$, we have $$\mathbf G(F)=K_FU_F\Lambda S\mathbf T(\mathbb Q_p)=(K_F\Lambda)U_F S\mathbf T(\mathbb Q_p)=KU_FS\mathbf T(\mathbb Q_p)$$ where $K$ is the compact subset $K_F\Lambda$ in $\mathbf G(F)$. Let $Y_p=(KU_FS)\cap\mathbf G(\mathbb Q_p)$. Then $\mathbf G(\mathbb Q_p)=Y_p\mathbf T(\mathbb Q_p)$.

We prove that the subset $Y_p$ satisfies the second claim in the proposition. Let $\rho$ be a representation of $\mathbf G(\mathbb Q_p)$ on a space $V$ over $\mathbb Q_p$, and we extend it to a presentation of $\mathbf G(F)$ over $F$. Let $y=kus\in Y_p$ for $k\in K$, $u\in U_F$ and $s\in S$. Then assuming Lemmas \ref{scl41} and \ref{scl42} below, we have for any $v\in V$
\begin{align*}
\sup_{t\in\Omega}\|yt.v\|&=\sup_{t\in\Omega}\|kust.v\|\geq c_1\sup_{t\in\Omega}\|ut.(s.v)\|\geq c_2\sup_{t\in\Omega}\|(t^{-1}ut).(s.v)\|\\
&\geq c_2c_3\|s.v\|\geq c_2c_3c_4\|v\|.
\end{align*}

Hence we only need to prove the following lemmas.
\begin{lemma}\label{scl41}
There exists a constant $c_3>0$ such that for any $v\in V$ $$\sup_{t\in\Omega}\|(t^{-1}ut).v\|\geq c_3\|v\|$$
\end{lemma}
\begin{proof}
Let $\Phi$ be the root system of $(\mathbf G(F),\mathbf T(F))$ and $\Phi_+$ the set of positive roots. Let $\alpha_1,\alpha_2,\dots,\alpha_n$ be the positive simple roots of $\mathbf T(F)$ in $\Phi_+$. Then there is an isomorphism $f_\Phi:\mathbf T(F)\to(F_\times)^n$ which maps $x\in\mathbf T(F)$ to $$f_\Phi(x):=(\alpha_1(x),\alpha_2(x),\dots,\alpha_n(x)).$$ Write $u=\exp(\overline u)$ where $$\overline u=\sum_{\alpha\in \Phi_+}u_\alpha\in\sum_{\alpha\in \Phi_+}\mathfrak g_\alpha.$$ Note that for $t\in\Omega$, $t^{-1}ut.v=\exp(\sum_{\alpha\in \Phi_+}\alpha(t)u_\alpha).$ Since the positive roots are generated by simple roots, it follows that  the set $\{t^{-1}ut:t\in\Omega\}$ is the image of the map $f$ defined by $$(\alpha_1(t),\alpha_2(t),\dots,\alpha_n(t))\mapsto\exp(\sum_{\alpha\in \Phi_+}\alpha(t)u_\alpha).$$ We conclude that $\{t^{-1}ut:t\in\Omega\}$ is the image of a polynomial map $f$ defined on $f_\Phi(\Omega)$ which maps $s=(s_1,s_2,\dots,s_n)\in f_\Phi(\Omega)$ to $$f(s_1,s_2,\dots,s_n):=\exp(\sum_{\alpha\in \Phi_+}\alpha(s_1,s_2,\dots,s_n)u_\alpha)$$ where $\alpha(s_1,s_2,\dots,s_n)$ is a polynomial in $s_1,s_2,...,s_n$ without constant term.

To finish the proof of the lemma, it is enough to prove that there exists a constant $c_3>0$ such that $$\sup_{s\in f_\Phi(\Omega)}\|f(s).v\|\geq c_3\|v\|.$$ Since $f$ is a polynomial map on each variable $s_i$ with a uniformly bounded degree, by \cite[Corollary 3.3]{KT07}, $f$ is $(C,\alpha)$-good for some $C$ and $\alpha$. We also have $f(0)=e$, so $f(0).v=v.$ Now choose a bounded region $D$ in $F^n$ containing $f_\Phi(\Omega)$, and by the $(C,\alpha)$-good property, there are constants $c_3>0$ depending only on $C$, $\alpha$ and $f_\Phi(\Omega)$ such that $$\mu_{F^n}(\{s\in D:\|f(s).v\|\leq c_3\|v\|\})\leq\frac12\mu_{F^n}(f_\Phi(\Omega))$$ where $\mu_{F^n}$ denotes the Haar measure on $F^n$. Hence there is $s\in f_\Phi(\Omega)$ such that $\|f(s).v\|\geq c_3\|v\|.$ This completes the proof of the lemma.
\end{proof}

\begin{lemma}\label{scl42}
There exists a constant $c_4>0$ such that for any $v\in V$ $$\|s.v\|\geq c_4\|v\|.$$
\end{lemma}
\begin{proof}
We write the weight space decomposition for $V_F$ over $F$ as $$V=\sum_{\alpha} V_\alpha$$ and for any $v\in V$ we write $v=\sum_{\alpha}v_\alpha$. Then $s.v=\sum_{\alpha}\alpha(s)v_\alpha$ and $$\sigma_i(s.v)=\sigma_i(s).v=\sum_{\alpha}\alpha(\sigma_i(s))v_\alpha.$$ We know that $\|\cdot\|$ and $\|\sigma_i(\cdot)\|$ are equivalent norms on the vector space $V_F$ over $F$, and for any $v\in V_F$, there exists a constant $\kappa>0$ such that $$\frac1\kappa\|v\|\leq\|\sigma_i(v)\|\leq\kappa\|v\|$$ $$\frac1\kappa\max_\alpha\|v_\alpha\|\leq\|v\|\leq\kappa\max_\alpha\|v_\alpha\|.$$ For any $v\in V$, let $v_\beta=\max\|v_\alpha\|$. Then $$\|s.v\|\geq\frac1\kappa\|\sigma_i(s).v\|\geq\frac1{\kappa^2}\|\beta(\sigma_i(s)).v_\beta\|$$ $$\|s.v\|^n\geq\frac1{\kappa^{2n}}\|\beta(\prod_{i=1}^n\sigma_i(s))\|\|v_\beta\|^n\geq\frac1{\kappa^{3n}}\|v\|^n.$$
This implies that $\|s.v\|\geq c_4\|v\|$ for some constant $c_4>0$.
\end{proof}
This completes the proof of the proposition.
\end{proof}

\begin{proof}[Proof of Theorem \ref{scth41}]
We embed $\mathbf G$ in $\SL_n$ for some $n$, and hence $$\mathbf G(\mathcal O_S)\backslash\mathbf G(\mathbb Q_S)\subset\SL_n(\mathcal O_S)\backslash\SL_n(\mathbb Q_S)\subset\GL_n^1(\mathcal O_S)\backslash\GL_n^1(\mathbb Q_S)$$ where $\GL_n^1(\mathcal O_S)\backslash\GL_n^1(\mathbb Q_S)$ is the space of unimodular lattices $\Omega_{S,n}^1$ in $\mathbb Q_S^n$. Note that the group action of $\mathbf G(\mathbb Q_S)$ on  $\GL_n^1(\mathcal O_S)\backslash\GL_n^1(\mathbb Q_S)$ is defined by $$\Delta\mapsto h^{-1}\Delta,\;\forall h\in\mathbf G(\mathbb Q_S),\;\forall\Delta\in\Omega_{S,n}^1.$$ 

For any $p\in S$, let $Y_p$ be the subset of $\mathbf G(\mathbb Q_p)$ in Proposition~\ref{scp41} and we have $\mathbf G(\mathbb Q_p)$=$\mathbf T(\mathbf Q_p)Y_p^{-1}$. For $g\in\mathbf G(\mathbb Q_S)$, we can write $g=(t_py_p^{-1})_{p\in S}$ where $t_p\in\mathbf T(\mathbb Q_p)$ and $y_p\in Y_p^{-1}$ and we have $$\Gamma_S\backslash\Gamma_S\Omega g=\Gamma_S\backslash\Gamma_S(t_p)_{p\in S}\Omega(y_p^{-1})_{p\in S}.$$ Since $\mathbf T$ is $\mathbb Q$-anisotropic, $\mathbf T(\mathcal O_S)\backslash\mathbf T(\mathbb Q_S)$ is compact and there exists $s\in\mathbf T(\mathbb Q_S)$ in a compact fundamental domain of $\mathbf T(\mathcal O_S)\backslash\mathbf T(\mathbb Q_S)$ such that $\Gamma_S(t_p)_{p\in S}=\Gamma_Ss.$ Let $y=(y_p)_{p\in S}.$ We have $$\Gamma_S\backslash\Gamma_S\Omega g=\Gamma_S\backslash\Gamma_Ss\Omega y^{-1}.$$

Let $\rho_k$ be the representation of $\mathbf G(\mathbb Q_S)$ on the $k$-th exterior product space $V_k$ of $\mathbb Q_S^n$ $(1\leq k\leq n)$. Then by Proposition~\ref{scp41}, for any $v\in V_k$ $$\sup_{t\in\Omega} c((yt)s.v)\geq C_0c(s.v)$$ for some constant $C_0>0$. Here $c(v)$ denotes the content of the element $v$. This implies that for every submodule $\Delta\subset\mathcal O_S^n$ $$\sup_{t\in\Omega}\cov((yt)s.\Delta)\geq C_0\cov(s.\Delta)\geq C_1\cov(\Delta)$$ for some constant $C_1>0$, as $s$ is in a fixed compact subset. Since $\Omega$ is an image of a $(C,\alpha)$-good map, by \cite[Lemma 8.2]{KT07}, we can apply \cite[Theorem 9.3]{KT07} and there is a compact subset $K$ in $\Omega_{S,n}^1$ depending only on $n$, $\Omega$, $S$, $C$ and $\alpha$ such that $yts\mathcal O_S^n\in K$ for some $t\in\Omega$. This implies that for any $g\in\mathbf G(\mathbb Q_S)$, $\Gamma_S\backslash\Gamma_S\Omega g\cap K\neq\emptyset$.
\end{proof}

We conclude this section by the following thoerem, which is a corollary of Theorems~\ref{th44} and \ref{scmth41}.

\begin{theorem}\label{scmth42}
Let $\mathbf G$ be a reductive algebraic group defined over $\mathbb Q$, which does not admit nontrivial $\mathbb Q$-characters. Let $\mathbf H$ be a reductive $\mathbb Q$-subgroup of $\mathbf G$ without nontrivial $\mathbb Q$-characters, which contains a maximal torus of $\mathbf G$. Let $S$ be a finite set of places of $\mathbb Q$ containing the Archimedean place, $H$ a subgroup of finite index in $\mathbf H(\mathbb Q_S)$ and $\{g_i\}\in\mathbf G(\mathbb Q_S)$. Then after passing to a subsequence, the sequence of probability measures $g_i^*\mu_{\Gamma\backslash\Gamma H}$ converges to a probability measure.
\end{theorem}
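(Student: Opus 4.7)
The plan is to obtain Theorem~\ref{scmth42} as an immediate corollary by combining the two preceding theorems of this section. The first step is to observe that Theorem~\ref{scmth41} supplies exactly the hypothesis required by Theorem~\ref{th44}: rewriting the conclusion of Theorem~\ref{scmth41} with $g$ replaced by $h^{-1}$ produces a compact $K_0\subset X_S$ such that $\Gamma_S\backslash\Gamma_S U^* h^{-1}\cap K_0\neq\emptyset$ for every $h\in\mathbf G(\mathbb Q_S)$. Thus the hypothesis of Theorem~\ref{th44} is met with this universal $K_0$, and verifying it requires no information about the sequence $\{g_i\}$.

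Next, I would apply Theorem~\ref{th44} with $g=g_i$ for every $i\in\mathbb N$. For each $\epsilon>0$ this yields a compact set $K_\epsilon\subset X_S$ with
\begin{equation*}
(g_i^*\mu_{\Gamma_S\backslash\Gamma_S H})(K_\epsilon)=\mu_{\Gamma_S\backslash\Gamma_S H g_i^{-1}}(K_\epsilon)\geq 1-\epsilon
\end{equation*}
for all $i$, where the equality is simply the fact that the left $g_i$-action pushes the periodic measure on $\Gamma_S\backslash\Gamma_S H$ to the periodic measure on $\Gamma_S\backslash\Gamma_S H g_i^{-1}$. The essential point is that the compact set $K_\epsilon$ furnished in the proof of Theorem~\ref{th44} depends only on $\epsilon$, $K_0$, $\Omega$, the dimension $n$, the set $S$, and the uniform constants $C,\alpha$ governing the $(C,\alpha)$-good parametrizations $\phi_i$, but not on $g$ itself. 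This $g$-uniformity is what permits a single $K_\epsilon$ to control every term of the sequence simultaneously.

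The final step is a soft compactness argument. The preceding bound expresses the tightness of the family $\{g_i^*\mu_{\Gamma_S\backslash\Gamma_S H}\}_{i\in\mathbb N}$ on the locally compact second countable Hausdorff space $X_S$. By Prokhorov's theorem this family is relatively compact in the space of Borel probability measures on $X_S$ endowed with the weak$^*$ topology, so a subsequence converges weakly, and the tightness bound prevents any mass from escaping to infinity, ensuring that the limit is genuinely a probability measure.

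I do not anticipate a serious obstacle in this combination itself, since Theorems~\ref{th44} and \ref{scmth41} already carry the full technical burden. The only point requiring care is the $g$-uniformity of the compact set produced by Theorem~\ref{th44}; tracing through its proof one sees the constants $C_1, C, N$ and the domain $\Omega$ are all fixed independently of $g$, so the same $K_\epsilon$ indeed works for every $g_i$ in the sequence.
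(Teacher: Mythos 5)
Your proposal is correct and follows exactly the paper's (unstated) route: the paper simply remarks that Theorem~\ref{scmth42} is a corollary of Theorems~\ref{th44} and~\ref{scmth41}, and your proof fills in precisely the intended details. Your key observation — that the compact set $K_\epsilon$ produced in the proof of Theorem~\ref{th44} depends only on $K_0$, $\epsilon$, $\Omega$, $n$, $S$ and the uniform $(C,\alpha)$-good constants, and is therefore independent of $g$ — is the essential point that makes the tightness uniform over the sequence $\{g_i\}$, and you identify it correctly.
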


\section{Auxiliary results in group theory}\label{gp}
In this section, we prove some auxiliary lemmas for several arguments in \S \ref{mr}.

Recall that $H$ is a subgroup of finite index in $\mathbf H(\mathbb Q_S)$. For any $p\in S_f$, we denote by $H_p$ a small open compact subgroup of $\mathbf H(\mathbb Q_p)$ contained in $H$, where the exponential map $\exp_p$ is defined. The subgroup $H_p$ could be considered as a closed subgroup in $\SL_n(\mathbb Q_p)$ for some $n\in\mathbb N$. Note that all the eigenvalues of elements in $H_p$ stay in a small neighborhood of $1\in\mathbb Q_p$. In the following, for a sequence $\{W_i\}_{i\in\mathbb N}$ of subsets in $\SL_n(\mathbb Q_p)$, we say $\{W_i\}$ is unbounded if for any compact subset $K\subset \SL_n(\mathbb Q_p)$, there are infinitely many $W_i$ such that $W_i\not\subset K$. Otherwise, we say $\{W_i\}$ is bounded.

\begin{lemma}\label{l51}
Let $\{\gamma_i\}$ be a sequence in $\SL_n(\mathbb Q_p)$. Suppose $\{\Ad\gamma_i(H_p)\}$ ($i\in\mathbb N$) is unbounded. Then for any $N>0$, after passing to a subsequence, there exists a sequence $h_i\in H_p$ such that $$h_i\to e_p,\quad\Ad\gamma_i(h_i)\to u\quad (i\to\infty)$$  for some unipotent element $u$ in $\SL_n(\mathbb Q_p)$ with $|u|_p\geq N$.
\end{lemma}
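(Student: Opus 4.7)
My plan is to reduce the lemma to the $(C,\alpha)$-good machinery of Section~\ref{ca} by exponentiating elements of $H_p$ and invoking the weak intermediate value theorem of Proposition~\ref{p33} to extract, out of the unbounded sets $\Ad\gamma_i(H_p)$, conjugates of any prescribed intermediate size. The limit element $u$ will be unipotent for the automatic reason that conjugation preserves eigenvalues: if $h_i\to e_p$ then all eigenvalues of $\Ad\gamma_i(h_i)$ tend to $1$, and this property survives in the limit.

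First I would set up the one-parameter curve. Because $H_p$ is small enough for $\exp_p$ to be a diffeomorphism and for the subsequent $(C,\alpha)$-good estimates, $\mathfrak a:=\log_p(H_p)$ is a compact neighborhood of $0$ in $\Lie(\mathbf H(\mathbb Q_p))$, and I may fix a single finite extension $F$ of $\mathbb Q_p$ (using that $\mathbb Q_p$ has only finitely many finite extensions of each bounded degree) together with an integer $k$ so that every eigenvalue of every $X\in\mathfrak a$ lies in $p^k\mathcal O_F$ and $k$ meets the hypothesis of Proposition~\ref{p31}(2). By the unboundedness of $\{\gamma_iH_p\gamma_i^{-1}\}$ and after passing to a subsequence, I pick $X_i\in\mathfrak a$ with $\|\gamma_i\exp_p(X_i)\gamma_i^{-1}\|\to\infty$. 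Setting $Y_i:=\gamma_iX_i\gamma_i^{-1}$, the curve $s\mapsto\exp_p(sY_i)=\gamma_i\exp_p(sX_i)\gamma_i^{-1}$ on $\mathbb Z_p$ has the same spectrum as $X_i$, still in $p^k\mathcal O_F$.

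Next, using the Jordan decomposition $Y_i=Y_i^s+Y_i^n$ (semisimple plus nilpotent, commuting), each matrix entry of $\exp_p(sY_i)-I$ expands, after combining terms with equal eigenvalues, as $\sum_{j,l}c_{j,l}^{(i)}s^l e^{\lambda_j^{(i)}s}$ with distinct $\lambda_j^{(i)}\in p^k\mathcal O_F$. Proposition~\ref{p31}(2) then implies that each such entry is a $(C,\alpha)$-good function from $\mathbb Z_p$ to $F$ with $C,\alpha$ depending only on $F$ and $n$; taking the maximum over the finitely many entries preserves this property with the same constants, so $g_i(s):=\|\exp_p(sY_i)-I\|$ is $(C,\alpha)$-good, satisfies $g_i(0)=0$, and has $\sup_{\mathbb Z_p}g_i\to\infty$. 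Fixing $N>0$ and the constant $C'$ from Proposition~\ref{p33}, iteration of that proposition yields $\sup_{p^m\mathbb Z_p}g_i\geq\sup_{\mathbb Z_p}g_i/(C')^m$, and I choose $k_i$ to be the largest integer with $\sup_{p^{k_i}\mathbb Z_p}g_i\geq N$. Then $k_i\to\infty$ (because $\sup_{\mathbb Z_p}g_i\to\infty$) and $\sup_{p^{k_i}\mathbb Z_p}g_i<C'N$ by the maximality of $k_i$ together with Proposition~\ref{p33}. Using compactness of $p^{k_i}\mathbb Z_p$, I pick $s_i\in p^{k_i}\mathbb Z_p$ attaining this sup, so $N\leq g_i(s_i)<C'N$.

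Setting $h_i:=\exp_p(s_iX_i)\in H_p$, the facts that $s_i\to 0$ and $X_i\in\mathfrak a$ is bounded force $h_i\to e_p$, while $\Ad\gamma_i(h_i)=\exp_p(s_iY_i)$ stays in a bounded region of $\SL_n(\mathbb Q_p)$ by the upper bound $g_i(s_i)<C'N$. Passing to a further subsequence, $\Ad\gamma_i(h_i)\to u\in\SL_n(\mathbb Q_p)$ with $\|u-I\|\geq N$. Since $\Ad\gamma_i(h_i)$ is conjugate to $h_i\to e_p$, its eigenvalues tend to $1$, so $u$ has all eigenvalues equal to $1$ and is unipotent; the norm bound $|u|_p\geq N$ follows upon replacing $N$ by $N+1$ at the outset. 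The main technical obstacle I anticipate is the uniformity of the $(C,\alpha)$-good constants across $i$, which is secured by the a~priori control of the splitting field $F$ together with the $p$-adic smallness of the eigenvalues of elements of $\mathfrak a$; once this uniformity is in place, the rest of the argument is a direct descent via Proposition~\ref{p33}.
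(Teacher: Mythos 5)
Your proof is correct and takes essentially the same approach as the paper's: exponentiate to $(C,\alpha)$-good one-parameter curves, extract a conjugate of intermediate size via the weak intermediate value theorem (Proposition~\ref{p33}) while ensuring $s_i\to0$, and conclude unipotence of the limit from the eigenvalue argument. The only difference in packaging is that you iterate Proposition~\ref{p33} directly to choose the contraction level $k_i$, whereas the paper first invokes Corollary~\ref{c31} to replace $X_i$ by $s_iX_i$ and then applies Proposition~\ref{p33} once; these achieve the same thing, and your explicit treatment of the splitting field $F$ and the uniformity of $(C,\alpha)$ simply spells out what the paper leaves to the reader.
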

\begin{proof}
Since $\{\Ad\gamma_i(H_p)\}$ is unbounded, by passing to a subsequence, there is $x_i=\exp(X_i)$ in $H_p$ with $X_i\in\Lie(\SL_n(\mathbb Q_p))$ such that $\Ad\gamma_i(x_i)\to\infty$. By Proposition \ref{p31} and \cite[Lemma 3.1]{KT07}, we know that the function $f_i:\mathbb Z_p\to\mathbb R$ defined by $$f_i(s)=\|\gamma_i(\exp(sX_i))\gamma_i^{-1}-e_p\|_p,\quad s\in\mathbb Z_p$$ is a $(C,\alpha)$-good function with some $C,\alpha>0$ uniformly for any $i\in\mathbb N$. Our choices of $x_i$'s then imply $f_i(1)\to\infty$ as $i\to\infty$. By Corollary \ref{c31} we can find $s_i\in\mathbb Z_p$ so that $$s_iX_i\to0,\quad f_i(s_i)\to\infty.$$ Therefore, by replacing $X_i$ with $s_iX_i$, we may assume $X_i\to0$. Now for any $N>0$, one can find $k\in\mathbb N$ such that $C'^k\geq N$ where $C'$ is the constant in Proposition \ref{p33}. Since $f_i(0)=0$ and $f_i(s_i)\to\infty$ as $i\to\infty$, we apply Proposition \ref{p33} with $\rho=C'^{k+1}$, and one can find $v_i\in\mathbb Z_p$ such that $$f_i(v_i)\in[C'^k,C'^{k+1}].$$ Let $h_i=\exp(v_iX_i)$. Then by passing to a subsequence, we have $$h_i\to e_p,\quad \Ad\gamma_i(h_i)\to u$$ for some $u\in\SL_n(\mathbb Q_p)$ with $|u-e_p|_p\in[C'^k,C'^{k+1}]$. Since $h_i\to e_p$, all the eigenvalues of $\Ad\gamma_i(h_i)$ converges to $1$. It follows that $u$ is a unipotent element with $|u|_p\geq N$. 
\end{proof}

\begin{lemma}\label{l52}
A closed subgroup of $\SL_n(\mathbb Q_p)$ containing arbitrarily large unipotent elements has a one-parameter unipotent subgroup.
\end{lemma}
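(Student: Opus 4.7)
My plan is to extract from the unipotent elements $u_i \in L$ with $\|u_i\|_p \to \infty$ a single nilpotent direction $X_\infty$ by renormalization, and then use the smallness of the renormalizing scalars combined with the closedness of $L$ to build the one-parameter subgroup $\{\exp(sX_\infty) : s \in \mathbb{Q}_p\} \subset L$. First I would write $u_i = \exp(X_i)$ with $X_i = \log u_i$, a well-defined nilpotent matrix since the log series truncates to a polynomial on unipotents; as $\exp$ is a $p$-adic polynomial homeomorphism between the nilpotent and unipotent loci, we also have $\|X_i\|_p \to \infty$. Next I would choose $k_i \in \mathbb{Z}$ with $\|p^{k_i} X_i\|_p = 1$, which is possible because $\|X_i\|_p \in p^{\mathbb{Z}}$; the condition $\|X_i\|_p \to \infty$ forces $k_i \to +\infty$, so $|p^{k_i}|_p \to 0$ and $p^{k_i} \in \mathbb{Z}_p$ for large $i$. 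By local compactness together with closedness of the nilpotent locus $\{X : X^n = 0\}$, I would pass to a subsequence and take the limit $X_\infty := \lim p^{k_i} X_i$, a nonzero nilpotent element of norm $1$.

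\textbf{Building the one-parameter subgroup.} I would then claim $\{\exp(s X_\infty) : s \in \mathbb{Q}_p\} \subset L$, which, since $X_\infty \neq 0$, is a genuine one-parameter unipotent subgroup. To verify the claim I would fix $s \in \mathbb{Q}_p$ and observe that $|p^{k_i}|_p \to 0$ forces $s p^{k_i} \in \mathbb{Z}_p$ for all sufficiently large $i$. For any unipotent $u$ and $r \in \mathbb{Z}_p$, the $r$-th power $u^r = \exp(r \log u)$ is polynomial in $r$ and is the limit of $u^{r_j}$ as $r_j \in \mathbb{Z}$ tends to $r$, hence belongs to every closed subgroup containing $u$. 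Applied with $u = u_i$ and $r = s p^{k_i}$ this would give
\[ L \;\ni\; u_i^{s p^{k_i}} \;=\; \exp\bigl(s \cdot p^{k_i} X_i\bigr) \;\longrightarrow\; \exp(s X_\infty) \]
as $i \to \infty$, and closedness of $L$ would supply the desired conclusion $\exp(s X_\infty) \in L$.

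\textbf{Main obstacle.} The delicate point I expect to be the crux is the passage from $\mathbb{Z}_p$-powers, which are automatic in any closed subgroup, to arbitrary $\mathbb{Q}_p$-scalar multiples in the Lie algebra needed to produce a full one-parameter subgroup. The \emph{arbitrarily large} hypothesis is precisely what supplies this: the rescalings $p^{k_i}$ bringing $X_i$ back to bounded size must vanish $p$-adically, so any fixed $s \in \mathbb{Q}_p$ is eventually absorbed into $\mathbb{Z}_p$ upon multiplication by $p^{k_i}$. Without unboundedness no such mechanism exists, consistent with the introduction's remark that compact unipotent subgroups over $\mathbb{Q}_p$ constitute the essential obstruction distinguishing the $S$-arithmetic from the real setting.
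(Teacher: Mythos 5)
Your proof is correct and follows essentially the same route as the paper: normalize $X_i=\log u_i$ by a power of $p$ to extract a limiting nilpotent $X_\infty$ of norm $1$, then use the closedness of $L$ to obtain $\exp(sX_\infty)\in L$. The only cosmetic difference is that you package the approximation step as the general fact that a closed subgroup containing a unipotent $u$ contains all $\mathbb{Z}_p$-powers $u^r$, whereas the paper inlines this by choosing integers $m_i$ approximating $sp^{k_i}$ and passing to the limit; both amount to the same use of density of $\mathbb{Z}$ in $\mathbb{Z}_p$.
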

\begin{proof}
The exponential map $\exp(X)$ can be defined for nilpotent elements $X$ in the Lie algebra of $\SL_n(\mathbb Q_p)$, and the $\log$ function $\log(u)$ can be defined for unipotent elements $u$ in $\SL_n(\mathbb Q_p)$. Moreover, the sets of nilpotent elements and of unipotent elements are both closed. Now let $\{u_i\}$ be a sequence of unipotent elements in a closed subgroup $M$ with $u_i\to\infty$. We take $X_i=\log u_i$ and we have $X_i\to\infty$. By passing to a subsequence, $|X_i|_pX_i$ converges to a nilpotent element $X$ with $|X|_p=1$. Now we prove that the one-parameter unipotent subgroup $$s\in\mathbb Q_p\mapsto\exp(sX)$$ is contained in $M$.

For $s\neq0$ and $i$ sufficiently large, since $\overline{\mathbb Z}=\mathbb Z_p$, one can find $m_i\in\mathbb Z$ such that $$\left|m_i-s|X_i|_p\right|_p\leq\frac1{i|X_i|_p}.$$ This implies that $$m_iX_i=\left(m_i-s|X_i|_p\right)X_i+s|X_i|_pX_i\to sX.$$ Since $\exp(m_iX_i)=u_i^{m_i}\in M$ and $M$ is closed, we have $\exp(sX)\in M$. This completes the proof of the lemma.
\end{proof}

The following is an immediate corollary of Lemmas \ref{l51} and \ref{l52}.
\begin{proposition}\label{p51}
Let $\{\mu_i\}$ be a sequence of probability measures on $\Gamma\backslash\mathbf G(\mathbb Q_S)$ which weakly converges to a measure $\mu$, and $p$ a non-Archimedean place in $S$. Suppose that each measure $\mu_i$ is invariant under $\Ad(\gamma_i)H_p$ for some $\gamma_i\in\mathbf G(\mathbb Q_p)$, and $\{\Ad(\gamma_i)H_p\}$ is unbounded. Then $\mu$ is invariant under a one-parameter unipotent subgroup.
\end{proposition}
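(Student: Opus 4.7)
The plan is to exhibit, for every $N>0$, a unipotent element $u_N$ in the stabilizer
\[ M \defi \{g \in \mathbf G(\mathbb Q_p) : g\mu = \mu\} \]
satisfying $|u_N|_p \ge N$, and then to invoke Lemma \ref{l52}. First, observe that $M$ is a closed subgroup of $\mathbf G(\mathbb Q_p)$, since the left $\mathbf G(\mathbb Q_p)$-action on $\cP(\Gamma_S\backslash\mathbf G(\mathbb Q_S))$ is jointly continuous in the weak-$^*$ topology. Viewing $\mathbf G$ inside $\SL_n$, we may treat $M$ as a closed subgroup of $\SL_n(\mathbb Q_p)$.

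Fix $N>0$. By hypothesis $\{\Ad(\gamma_i)H_p\}$ is unbounded, so Lemma \ref{l51} applies: after passing to a subsequence we obtain $h_i \in H_p$ with
\[ h_i \to e_p, \qquad \Ad(\gamma_i)(h_i) \to u_N, \]
where $u_N$ is a unipotent element of $\SL_n(\mathbb Q_p)$ with $|u_N|_p \ge N$. Since $\mu_i$ is $\Ad(\gamma_i)H_p$-invariant, we have $\bigl(\Ad(\gamma_i)(h_i)\bigr)\mu_i = \mu_i$ for every $i$. Passing to the weak-$^*$ limit, using joint continuity of the action together with $\mu_i \to \mu$, we deduce $u_N \mu = \mu$; that is, $u_N \in M$.

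Since $N$ was arbitrary, $M$ contains unipotent elements of arbitrarily large $p$-adic size. Applying Lemma \ref{l52} to the closed subgroup $M$ of $\SL_n(\mathbb Q_p)$ produces a one-parameter unipotent subgroup contained in $M$, which is precisely the claimed invariance of $\mu$.

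The only step requiring care is the limit passage $u_N\mu = \mu$. For any $f \in C_c(\Gamma_S\backslash\mathbf G(\mathbb Q_S))$ one bounds
\[ \left| \int f\,d\mu - \int f\circ u_N^{-1}\,d\mu \right| \]
by a standard three-term estimate: the discrepancy between integrating $f$ against $\mu$ versus $\mu_i$ (small by weak-$^*$ convergence); the difference between integrating $f\circ u_N^{-1}$ and $f\circ\bigl(\Ad(\gamma_i)(h_i)\bigr)^{-1}$ against $\mu_i$ (small by uniform continuity of $f$ together with $\Ad(\gamma_i)(h_i) \to u_N$); and the term $|\int f\circ\bigl(\Ad(\gamma_i)(h_i)\bigr)^{-1}\,d\mu_i - \int f\,d\mu_i|$, which vanishes by invariance. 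The main work has already been absorbed into Lemmas \ref{l51} and \ref{l52} — what remains is this routine $\varepsilon/3$ argument — so no substantial new obstacle arises.
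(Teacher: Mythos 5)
Your proposal is correct and matches the paper's intent: the paper simply declares Proposition \ref{p51} to be ``an immediate corollary of Lemmas \ref{l51} and \ref{l52}'', and you have spelled out exactly that deduction. One trivial bookkeeping point: the chain you describe actually needs a fourth term, namely $\bigl|\int f\circ u_N^{-1}\,d\mu_i - \int f\circ u_N^{-1}\,d\mu\bigr|$, to close the loop back from $\mu_i$ to $\mu$; this is handled the same way as your first term by weak-$^*$ convergence, so it does not affect the correctness of the argument.
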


Now we consider the case when $\{\Ad(\gamma_i)H_p\}$ is bounded.

\begin{proposition}\label{p54}
Let $\gamma_i\in\mathbf G(\mathbb Q_p)$. The sequence $\{\Ad(\gamma_i)H_p\}$ is bounded if and only if $\gamma_i Z_{\mathbf G}(\mathbf H)(\mathbb Q_p)$ is bounded in $\mathbf G(\mathbb Q_p)/Z_{\mathbf G}(\mathbf H(\mathbb Q_p))$.
\end{proposition}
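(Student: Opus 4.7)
The plan is to linearize the question by restricting the adjoint action of $\mathbf G(\mathbb Q_p)$ to the Lie algebra $\mathfrak h := \Lie(\mathbf H(\mathbb Q_p))$, viewed as a subspace of $\mathfrak g := \Lie(\mathbf G(\mathbb Q_p))$. Consider the orbit map
\[
\Psi: \mathbf G(\mathbb Q_p) \longrightarrow \mathrm{Hom}_{\mathbb Q_p}(\mathfrak h, \mathfrak g), \qquad g \mapsto \Ad(g)|_{\mathfrak h}.
\]
Since $\mathbf H$ is connected, the stabilizer of $\iota := \Psi(e)$ coincides with $Z_{\mathbf G}(\mathbf H)(\mathbb Q_p)$, so $\Psi$ descends to an injective continuous map $\bar\Psi : \mathbf G(\mathbb Q_p)/Z_{\mathbf G}(\mathbf H)(\mathbb Q_p) \to \mathrm{Hom}_{\mathbb Q_p}(\mathfrak h, \mathfrak g)$. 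Because $H_p$ contains $\exp_p(U)$ for some neighborhood $U$ of $0$ in $\mathfrak h$ and $\Ad(\gamma)$ is linear, $\{\Ad(\gamma_i)(H_p)\}$ is bounded in $\mathbf G(\mathbb Q_p)$ if and only if $\{\Psi(\gamma_i)\}$ is bounded in $\mathrm{Hom}_{\mathbb Q_p}(\mathfrak h, \mathfrak g)$. Thus the proposition reduces to showing that $\bar\Psi$ is a proper embedding.

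The ``if'' direction is then immediate: bounded cosets $\gamma_i Z_{\mathbf G}(\mathbf H)(\mathbb Q_p)$ in the locally compact quotient may be lifted to bounded representatives $\delta_i\in \mathbf G(\mathbb Q_p)$, and $\Ad(\gamma_i)(H_p) = \Ad(\delta_i)(H_p)$ since $\delta_i^{-1}\gamma_i\in Z_{\mathbf G}(\mathbf H)(\mathbb Q_p)$ acts trivially on $H_p$ by conjugation. For the ``only if'' direction I split the properness of $\bar\Psi$ into two steps: (i) $\bar\Psi$ is a $p$-adic-analytic diffeomorphism onto its image, and (ii) this image $\mathbf G(\mathbb Q_p)\cdot \iota$ is closed in $\mathrm{Hom}_{\mathbb Q_p}(\mathfrak h, \mathfrak g)$. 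Step (i) is an application of the $p$-adic implicit function theorem: the differential of $\Psi$ at the identity is $X\mapsto \ad(X)|_{\mathfrak h}$, whose kernel equals $\Lie Z_{\mathbf G}(\mathbf H)(\mathbb Q_p)$.

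The main obstacle is step (ii), and the hypothesis (from the beginning of \S\ref{gp}) that $\mathbf H$ contains a maximal torus $\mathbf T$ of $\mathbf G$ is precisely what resolves it. Indeed, this forces $Z_{\mathbf G}(\mathbf H)\subseteq Z_{\mathbf G}(\mathbf T) = \mathbf T$, so $Z_{\mathbf G}(\mathbf H)$ is a subtorus of $\mathbf T$ and in particular reductive. By the standard geometric-invariant-theoretic consequence of Matsushima's theorem, an orbit of a reductive group acting on an affine variety is Zariski-closed whenever its stabilizer is reductive; hence the algebraic orbit $\mathbf G\cdot \iota$ is Zariski-closed in the affine space $\mathrm{Hom}(\Lie \mathbf H, \Lie \mathbf G)$. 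Its $\mathbb Q_p$-points therefore form a $p$-adically closed set, and the $\mathbf G(\mathbb Q_p)$-orbit is clopen in those $\mathbb Q_p$-points -- it is one of finitely many Galois-cohomology pieces, each of which is open by the implicit function theorem. Combined with step (i), this yields the required proper embedding and hence the proposition.
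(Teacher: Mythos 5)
Your linearization via $\Psi(g)=\Ad(g)|_{\mathfrak h}$ is a genuinely different route from the paper, which instead picks finitely many semisimple elements $\alpha_1,\dots,\alpha_n\in H_p$ with $\bigcap_k Z_{\mathbf G}(\alpha_k)=Z_{\mathbf G}(\mathbf H(\mathbb Q_p))$ and invokes (via Borel--Tits and \cite[Prop.~1.6]{MT94}) that each orbit map $g\mapsto g\alpha_kg^{-1}$ is a homeomorphism onto the closed conjugacy class of a semisimple element. Your ``if'' direction and step (i) are fine. However, step (ii) rests on a claim that is simply false: there is no ``converse of Matsushima'' asserting that a reductive stabilizer forces a closed orbit. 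Take $\mathbb{G}_m$ acting on $\mathbb{A}^2$ by $t\cdot(x,y)=(tx,t^{-1}y)$: the point $(1,0)$ has trivial, hence reductive, stabilizer, yet its orbit $\{(t,0):t\ne0\}$ is not closed. Matsushima only tells you that $\mathbf G/Z_{\mathbf G}(\mathbf H)$ is an affine \emph{variety}; this does not preclude it from embedding as a non-closed orbit, exactly as $\mathbb{G}_m$ is an affine open in $\mathbb{A}^1$. The same failure occurs for the diagonal adjoint action you are using: in $\mathfrak{sl}_2\oplus\mathfrak{sl}_2$ the pair $(X_s,X_n)$ with $X_s$ semisimple and $X_n\neq 0$ nilpotent has finite (reductive) stabilizer, yet its diagonal $\operatorname{SL}_2$-orbit is not closed since it accumulates at $(X_s,0)$.

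The conclusion of step (ii) is nevertheless true, but the hypothesis you should be using is the reductivity of $\mathbf H$, not of $Z_{\mathbf G}(\mathbf H)$. Identifying $\mathrm{Hom}(\mathfrak h,\mathfrak g)$ with $\mathfrak g^{\dim\mathfrak h}$ under the diagonal adjoint action, the relevant result is Richardson's theorem on $n$-tuples: the diagonal $\mathbf G$-orbit of a tuple in $\mathfrak g^m$ is Zariski-closed if and only if the algebraic subalgebra the tuple generates is the Lie algebra of a reductive subgroup. Here a basis of $\mathfrak h$ generates $\mathfrak h=\Lie\mathbf H$ with $\mathbf H$ reductive. (Equivalently, run Hilbert--Mumford directly: if $\lim_{t\to 0}\lambda(t)\cdot\iota$ exists then $\mathfrak h\subseteq\mathfrak p_\lambda$; since $\mathbf H$ is reductive it is conjugate into the Levi $\mathbf L_\lambda$ by some $u$ in the unipotent radical $R_u(\mathbf P_\lambda)$, and one computes $\lim_{t\to0}\lambda(t)\cdot\iota=\Ad(u)|_{\mathfrak h}\in\mathbf G\cdot\iota$.) With that replacement your argument does go through; the paper sidesteps the whole issue by reducing to single semisimple conjugacy classes, where closedness of the orbit is classical.
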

\begin{proof}
The set of semisimple elements in $H_p$ is Zariski dense in $\mathbf H(\mathbb Q_p)$, so one can pick a finite number of semisimple elements $\alpha_i\in H_p$ such that $$Z_{\mathbf G}(\alpha_1)\cap Z_{\mathbf G}(\alpha_2)\cap...\cap Z_{\mathbf G}(\alpha_n)=Z_{\mathbf G}(\mathbf H(\mathbb Q_p)).$$ Consider the action of $\mathbf G(\mathbb Q_p)$ on itself by conjugation. Under this action, the stabilizer of $\alpha_i$ in $\mathbf G(\mathbb Q_p)$ equals $Z_{\mathbf G}(\alpha_i)$. This induces a map $\beta_i:\mathbf G(\mathbb Q_p)/Z_{\mathbf G}(\alpha_i)\to\mathbf G(\mathbb Q_p)$ defined by $$\alpha(gZ_{\mathbf G}(\alpha_i))=g\alpha_i g^{-1}.$$ By \cite{BT65,B91} or \cite[Proposition 1.6]{MT94}, this map is a homeomorphism between $\mathbf G(\mathbb Q_p)/Z_{\mathbf G}(\alpha_i)$ and the conjugacy class of $\alpha_i$. 

Suppose $\{\Ad(\gamma_i)H_p\}$ is bounded. Then the sequence $\{\Ad(\gamma_i)\alpha_k\}$ is bounded $(\forall k=1,2,...,n)$. The homeomorphism $\beta_i$ then implies that  the sequence $\{\gamma_iZ_{\mathbf G}(\alpha_k)\}$ is also bounded in $\mathbf G(\mathbb Q_p)/Z_{\mathbf G}(\alpha_k)$. Consequently, this implies that $\gamma_iZ_{\mathbf G}(\mathbf H(\mathbb Q_p))$ is bounded in $\mathbf G(\mathbb Q_p)/Z_{\mathbf G}(\mathbf H(\mathbb Q_p))$. Conversely, if $\{\gamma_iZ_{\mathbf G}(\mathbf H(\mathbb Q_p))\}$ is bounded in $\mathbf G(\mathbb Q_p)/Z_{\mathbf G}(\mathbf H(\mathbb Q_p))$, then one can find a compact subset $K$ in $\mathbf G(\mathbb Q_p)$ such that $\gamma_i\in KZ_{\mathbf G}(\mathbf H(\mathbb Q_p)).$ Then $\Ad(\gamma_i)H_p$ is contained in the compact subset $KH_pK^{-1}$.
\end{proof}

The following is an analogue of the previous results over $\mathbb R$ which can be proved in a similar manner.
\begin{proposition}\label{p55}
Let $\{\gamma_i\}$ be a sequence in $\mathbf G(\mathbb R)$. Consider the operator $$\Ad\gamma_i:\Lie(Z_{\mathbf G}(\mathbf H)(\mathbb R))\to\Lie(\mathbf G(\mathbb R)).$$ Then we have
\begin{enumerate}
\item If the sequence $\{\|\Ad\gamma_i\|\}$ is unbounded, then by passing to a subsequence, there exists a one-parameter unipotent subgroup $\{u(t):t\in\mathbb R\}$ in $\mathbf G(\mathbb R)$ satisfying the following condition: for any $s\in\mathbb R$, one can find a sequence $w_i\in Z_{\mathbf G}(\mathbf H)(\mathbb R)^0$ such that $\gamma_iw_i\gamma_i^{-1}\to u(s).$
\item The sequence $\{\|\Ad(\gamma_i)\|\}$ is bounded if and only if $\gamma_iZ_{\mathbf G}(\mathbf H)(\mathbb R)$ is bounded in $\mathbf G(\mathbb R)/Z_{\mathbf G}(\mathbf H)(\mathbb R)$.
\end{enumerate}
Here $\|\cdot\|$ denotes the operator norm.
\end{proposition}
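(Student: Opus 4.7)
The plan is to mirror the strategy of Propositions~\ref{p51} and~\ref{p54} at the Archimedean place. The key algebraic input I will exploit is that, since $\mathbf H$ contains a maximal torus $\mathbf T$ of $\mathbf G$, one has $Z_{\mathbf G}(\mathbf H)\subset Z_{\mathbf G}(\mathbf T)=\mathbf T$, so $Z_{\mathbf G}(\mathbf H)$ is abelian and $\Lie(Z_{\mathbf G}(\mathbf H)(\mathbb R))$ is a commutative subalgebra of $\Lie(\mathbf T(\mathbb R))$ consisting of semisimple elements whose eigenvalues are uniformly bounded on the unit sphere.

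For part~(1), I would pick, for each $i$, a unit vector $X_i\in\Lie(Z_{\mathbf G}(\mathbf H)(\mathbb R))$ with $c_i:=\|\Ad(\gamma_i)X_i\|\to\infty$ and rescale to $Y_i:=c_i^{-1}\Ad(\gamma_i)X_i$. Since conjugation preserves eigenvalues and the $X_i$ are uniformly semisimple and of unit norm, the eigenvalues of $Y_i$ are $c_i^{-1}$ times those of $X_i$ and hence tend to $0$. After passing to a subsequence, $Y_i\to Y$, a unit nilpotent element of $\Lie(\mathbf G(\mathbb R))$. Setting $u(t):=\exp(tY)$ and, for a fixed $s\in\mathbb R$, $w_i:=\exp(sc_i^{-1}X_i)$, one has $w_i\in Z_{\mathbf G}(\mathbf H)(\mathbb R)^0$, $w_i\to e$, and
\[
\gamma_iw_i\gamma_i^{-1}=\exp\bigl(sc_i^{-1}\Ad(\gamma_i)X_i\bigr)=\exp(sY_i)\to\exp(sY)=u(s),
\]
as required.

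For part~(2), the direction~($\Leftarrow$) is immediate from abelianness: writing $\gamma_i=k_iz_i$ with $k_i$ in a compact set and $z_i\in Z_{\mathbf G}(\mathbf H)(\mathbb R)$, the operator $\Ad(z_i)$ acts trivially on $\Lie(Z_{\mathbf G}(\mathbf H))$, so $\Ad(\gamma_i)|_{\Lie(Z_{\mathbf G}(\mathbf H))}=\Ad(k_i)|_{\Lie(Z_{\mathbf G}(\mathbf H))}$ is uniformly bounded. For the converse, I follow the semisimple-orbit technique from the proof of Proposition~\ref{p54}: pick semisimple generators $X_1,\ldots,X_r$ of $\Lie(Z_{\mathbf G}(\mathbf H)(\mathbb R))$, use the fact that each adjoint orbit $\Ad(\mathbf G(\mathbb R))\cdot X_k$ is closed and that the orbit map $\mathbf G(\mathbb R)/Z_{\mathbf G}(X_k)(\mathbb R)\to\Ad(\mathbf G(\mathbb R))\cdot X_k$ is a homeomorphism, and deduce from the boundedness of each $\Ad(\gamma_i)X_k$ that $\gamma_i$ is bounded modulo $\bigcap_k Z_{\mathbf G}(X_k)(\mathbb R)$. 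The main obstacle will be to identify this common centralizer with $Z_{\mathbf G}(\mathbf H)(\mathbb R)$ rather than the a priori larger $Z_{\mathbf G}(\Lie(Z_{\mathbf G}(\mathbf H)^0))(\mathbb R)$; I expect to handle this by augmenting the $X_k$ with a finite family of semisimple elements of $\mathbf H(\mathbb R)$ whose joint centralizer in $\mathbf G$ is exactly $Z_{\mathbf G}(\mathbf H)$ (as in the proof of Proposition~\ref{p54}) and running the orbit argument for all of these test elements simultaneously, using the reductive structure of $\mathbf H$ and the absence of nontrivial $\mathbb Q$-characters to ensure closedness of the combined orbit.
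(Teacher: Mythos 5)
Your part~(1) argument is correct and nicely streamlined: exploiting $Z_{\mathbf G}(\mathbf H)\subset Z_{\mathbf G}(\mathbf T)=\mathbf T$ to obtain a commutative subalgebra of semisimple elements with uniformly bounded spectra on the unit sphere, and then observing that the unit-norm rescaled conjugates must converge to a nilpotent element, is a clean Archimedean substitute for the $(C,\alpha)$-good bookkeeping of Lemma~\ref{l51} and Lemma~\ref{l52}. Part~(2)($\Leftarrow$) is also fine since $Z_{\mathbf G}(\mathbf H)$ is abelian. The paper gives no written proof of Proposition~\ref{p55} (it only asserts the statement is analogous to the earlier lemmas), so there is no reference argument to compare against; your outline is the natural one.

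However, the obstacle you flag in direction~(2)($\Rightarrow$) is genuine, and your proposed remedy does not close it. You suggest supplementing the semisimple generators $X_1,\ldots,X_r$ of $\Lie(Z_{\mathbf G}(\mathbf H)(\mathbb R))$ with further semisimple elements of $\mathbf H(\mathbb R)$ whose joint centralizer is exactly $Z_{\mathbf G}(\mathbf H)$, as in the proof of Proposition~\ref{p54}. But the hypothesis of~(2) only bounds $\Ad(\gamma_i)$ \emph{restricted to} $\Lie(Z_{\mathbf G}(\mathbf H)(\mathbb R))$; it gives no control whatsoever on $\gamma_i\alpha\gamma_i^{-1}$ for semisimple $\alpha\in\mathbf H$ lying outside $Z_{\mathbf G}(\mathbf H)$, so those extra test elements cannot be fed into the closed-orbit argument. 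From the stated hypothesis alone, your method yields only that $\gamma_i$ is bounded modulo $Z_{\mathbf G}(Z_{\mathbf G}(\mathbf H)^0)(\mathbb R)$, which in general properly contains $Z_{\mathbf G}(\mathbf H)(\mathbb R)$ (indeed it contains all of $\mathbf H(\mathbb R)$). This is less a defect of your argument than an imprecision in the proposition's statement: in Proposition~\ref{p54} the boundedness hypothesis is on $\Ad(\gamma_i)H_p$ with $H_p\subset\mathbf H(\mathbb Q_p)$, and the usage in \S\ref{mr4} and Proposition~\ref{p56} takes $\Lie(\mathbf T(\mathbb R))$ as the domain, neither of which matches $\Lie(Z_{\mathbf G}(\mathbf H)(\mathbb R))$. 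If one reads the domain in~(2) as $\Lie(\mathbf H(\mathbb R))$, parallel to Proposition~\ref{p54}, your orbit argument does go through: choose semisimple $\alpha_k\in\mathbf H(\mathbb R)$ with $\bigcap_k Z_{\mathbf G}(\alpha_k)=Z_{\mathbf G}(\mathbf H)$, note the hypothesis then bounds each $\gamma_i\alpha_k\gamma_i^{-1}$, and conclude via closedness of semisimple orbits; the reverse implication still holds because $\Ad(Z_{\mathbf G}(\mathbf H))$ acts trivially on $\Lie(\mathbf H)$.
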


Similarly, we obtain the following
\begin{proposition}\label{p56}
Let $\{\mu_i\}$ be a sequence of probability measures on $\Gamma\backslash\mathbf G(\mathbb Q_S)$ which weakly converges to a measure $\mu$. Suppose that each measure $\mu_i$ is invariant under $\Ad(\gamma_i)Z_{\mathbf G}(\mathbf H)(\mathbb R)^0$ for some $\gamma_i\in\mathbf G(\mathbb R)$, and the norms of the operators $\Ad(\gamma_i):\Lie(\mathbf T(\mathbb R))\to\Lie(\mathbf G(\mathbb R))$ are unbounded. Then $\mu$ is invariant under a one-parameter unipotent subgroup.
\end{proposition}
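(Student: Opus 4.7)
The plan is to mirror the proof of Proposition \ref{p51}, with Proposition \ref{p55} playing the role that Lemmas \ref{l51} and \ref{l52} play in the $p$-adic case. First I would apply Proposition \ref{p55}(1) to the sequence $\{\gamma_i\}$: the unboundedness of $\|\Ad(\gamma_i)\|$ on $\Lie(\mathbf T(\mathbb R))$, combined with the inclusion $Z_{\mathbf G}(\mathbf H)\subseteq Z_{\mathbf G}(\mathbf T)=\mathbf T$ (since $\mathbf T$ is a maximal torus of $\mathbf G$ contained in $\mathbf H$) and the framework of Proposition \ref{p55}, yields, after passing to a subsequence, a one-parameter unipotent subgroup $\{u(t):t\in\mathbb R\}$ in $\mathbf G(\mathbb R)$ with the following approximation property: for every $s\in\mathbb R$ one can choose $w_i=w_i(s)\in Z_{\mathbf G}(\mathbf H)(\mathbb R)^0$ with $\gamma_iw_i\gamma_i^{-1}\to u(s)$ as $i\to\infty$.

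Next I would propagate the invariance of $\mu_i$ across this approximation. By hypothesis $\mu_i$ is invariant under $\Ad(\gamma_i)Z_{\mathbf G}(\mathbf H)(\mathbb R)^0=\gamma_iZ_{\mathbf G}(\mathbf H)(\mathbb R)^0\gamma_i^{-1}$, so in particular $(\gamma_iw_i\gamma_i^{-1})_*\mu_i=\mu_i$ for every $i$. The left action of $\mathbf G(\mathbb Q_S)$ on $\cP(\Gamma_S\backslash\mathbf G(\mathbb Q_S))$ is jointly continuous when the target carries the weak-$*$ topology; feeding $\gamma_iw_i\gamma_i^{-1}\to u(s)$ and $\mu_i\to\mu$ into this continuity statement and passing to the limit in the invariance identity yields $u(s)_*\mu=\mu$. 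Since this holds for every $s\in\mathbb R$, the measure $\mu$ is invariant under the entire one-parameter unipotent subgroup $\{u(t)\}_{t\in\mathbb R}$, as desired.

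The main obstacle, as in Proposition \ref{p51}, is entirely encapsulated in Proposition \ref{p55}: producing a one-parameter unipotent subgroup out of an unbounded sequence of conjugations requires the real analogues of the $(C,\alpha)$-good machinery and of the weak intermediate value theorem, together with the extra point that the $u(s)$ can be approximated by $\gamma_i$-conjugates of elements of $Z_{\mathbf G}(\mathbf H)(\mathbb R)^0$ rather than merely of $\mathbf T(\mathbf R)$. Once this structural input is in hand, the final step is essentially formal: the measure-theoretic limit argument of the second paragraph above delivers the unipotent invariance.
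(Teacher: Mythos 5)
Your strategy---invoke Proposition \ref{p55}(1) to produce a one-parameter unipotent subgroup approximated by $\gamma_i$-conjugates of elements of $Z_{\mathbf G}(\mathbf H)(\mathbb R)^0$, and then pass to the limit using continuity of the $\mathbf G(\mathbb Q_S)$-action on measures---is exactly the argument the paper has in mind; it dispatches this proposition with a single ``Similarly'' after Proposition \ref{p55}. Your second paragraph, the limit-taking step, is correct as stated.

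There is, however, a real gap in your first step, and the sentence invoking the inclusion $Z_{\mathbf G}(\mathbf H)\subseteq Z_{\mathbf G}(\mathbf T)=\mathbf T$ does not close it. Proposition \ref{p55}(1) takes as hypothesis unboundedness of $\Ad\gamma_i$ restricted to the \emph{subspace} $\Lie(Z_{\mathbf G}(\mathbf H)(\mathbb R))$, whereas you are given unboundedness on the \emph{larger} space $\Lie(\mathbf T(\mathbb R))$; the inclusion you quote runs in exactly the wrong direction to deduce the former from the latter. The correct bridge goes through the contrapositive via Proposition \ref{p55}(2): if $\Ad\gamma_i$ were bounded on $\Lie(Z_{\mathbf G}(\mathbf H)(\mathbb R))$, then by (2) one could write $\gamma_i=c_it_i$ with $c_i$ in a fixed compact subset of $\mathbf G(\mathbb R)$ and $t_i\in Z_{\mathbf G}(\mathbf H)(\mathbb R)$; since $Z_{\mathbf G}(\mathbf H)$ centralizes $\mathbf T$ (as $\mathbf T\subseteq\mathbf H$), the operator $\Ad(t_i)$ is the identity on $\Lie(\mathbf T(\mathbb R))$, hence $\Ad(\gamma_i)|_{\Lie(\mathbf T(\mathbb R))}=\Ad(c_i)|_{\Lie(\mathbf T(\mathbb R))}$ would be bounded, contradicting the hypothesis. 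Once this step is supplied, Proposition \ref{p55}(1) applies, producing $w_i\in Z_{\mathbf G}(\mathbf H)(\mathbb R)^0$ with $\gamma_iw_i\gamma_i^{-1}\to u(s)$---and crucially $w_i$ lies in the group under whose $\gamma_i$-conjugate $\mu_i$ is assumed invariant---so your limit argument carries through.
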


\section{Measure rigidity: proof of Theorem \ref{th12}}\label{mr}
In this section, we prove Theorem \ref{th12} using S-arithmetic Ratner's theorem. We will mainly follow the frameworks and the presentations of \cite{EMS96,GO11}.

In order to prove Theorem \ref{th12}, we make the following reduction. We know that the sequence $g_i^*\mu_{\Gamma_S\backslash\Gamma_SH}$ converges to a probability measure $\mu$. Then there exist $a_i\in H$, $\gamma_i\in\Gamma_S$ and $g\in\mathbf G(\mathbb Q_S)$ such that $\pi_S(g)\in\supp(\mu)$ and $\gamma_ia_ig_i^{-1}\to g.$ We have $$\gamma_i^*\mu_{\Gamma_S\backslash\Gamma_SH}=(\gamma_ia_ig_i^{-1})^*(g_i)^*\mu_{\Gamma_S\backslash\Gamma_SH}\to g^*\mu.$$ Therefore, to prove that $\mu$ is an algebraic measure, it suffices to study the sequence of probability measures $\gamma_i^*\mu_{\Gamma_S\backslash\Gamma_SH}$ where $\{\gamma_i\}$ is a sequence in $\Gamma_S$. Now for each $\gamma_i\in\Gamma_S$, we denote by $\rho_i(x):=\gamma_ix\gamma_i^{-1}$ the conjugation of $\gamma_i$ in $\mathbf G$. Then $$\gamma_i^*\mu_{\Gamma_S\backslash\Gamma_SH}=\mu_{\Gamma_S\backslash\Gamma_S\rho_i(H)}$$ is the Haar measure on the finite-volume orbit $\Gamma_S\backslash\Gamma_S\rho_i(H)\subset\Gamma_S\backslash\mathbf G(\mathbb Q_S)$. Without loss of generality, we may assume that $\mu_{\Gamma_S\backslash\Gamma_S\rho_i(H)}$ converges to a probability measure $\mu$. 

Let $\mathbf T$ be a maximal torus in $\mathbf G$ which is defined and anisotropic over $\mathbb Q$, and contained in $\mathbf H$. Suppose that there are infinitely many $i\in\mathbb N$ such that $\gamma_i\mathbf H\gamma_i^{-1}$ is contained in a connected $\mathbb Q$-subgroup $\mathbf F$ of dimension smaller than $\dim\mathbf G$. By \cite[Lemma 5.1]{EMS97} and the condition that $\mathbf T\subset\mathbf F$, $\mathbf F$ is reductive and has no non-trivial $\mathbb Q$-characters. Since $$\mathbf T(\mathbb C)\subset\mathbf F(\mathbb C)\text{ and }\gamma_i\mathbf T(\mathbb C)\gamma_i^{-1}\subset\mathbf F(\mathbb C),$$ and both $\mathbf T(\mathbb C)$ and $\gamma_i\mathbf T(\mathbb C)\gamma_i^{-1}$ are maximal tori in $\mathbf F(\mathbb C)$, there exists an element $h_i\in\mathbf F(\mathbb C)$ such that $h_i\mathbf T(\mathbb C)h_i^{-1}=\gamma_i\mathbf T(\mathbb C)\gamma_i^{-1}$. Hence $h_i^{-1}\gamma_i$ is in the normalizer of $\mathbf T(\mathbb C)$. Let $W$ be a finite set of representatives of Weyl group in $\mathbf G(\mathbb C)$. Then $h_i^{-1}\gamma_i\in \mathbf T(\mathbb C)W$. This implies that there exists $w_i\in W$ such that $\gamma_iw_i^{-1}\in\mathbf F(\mathbb C).$ By passing to a subsequence, we may assume that $w_i^{-1}=w$ for all $i\in\mathbb N$. Now let $\delta_i=\gamma_i\gamma_1^{-1}$ and $\mathbf H'=\gamma_1\mathbf H\gamma_1^{-1}$. Then we have $$\delta_i=\gamma_iw(\gamma_1w)^{-1}\in\mathbf F(\mathbb C)\cap\mathbf G(\mathcal O_S)=\mathbf F(\mathcal O_S)$$ and $\{\delta_i\mathbf H'\delta_i^{-1}\}=\{\gamma_i\mathbf H\gamma_i^{-1}\}$ is contained in the $\mathbb Q$-reductive group $\mathbf F$. If we write the conjugation by $\delta_i$ as $\rho_i'(x):=\delta_ix\delta_i^{-1}$ and $H'=\gamma_1H\gamma_1^{-1}$, then $$\mu_{\Gamma_S\backslash\Gamma_S\rho_i(H)}=\mu_{\gamma_iH\gamma_i^{-1}}=\mu_{\delta_iH'\delta_i^{-1}}=\mu_{\Gamma_S\backslash\Gamma_S\rho_i'(H')}$$ is actually a sequence of probability measures on $\mathbf F(\mathcal O_S)\backslash\mathbf F(\mathbb Q_S)$. Therefore, without loss of generality, we may assume that $\mathbf G$ is the smallest $\mathbb Q$-subgroup containing the sequence $\{\gamma_i\mathbf H\gamma_i^{-1}\}$ and $\gamma_i\in\mathbf G(\mathcal O_S)$. 

Moreover, it is harmless to replace $\Gamma_S$ by a subgroup of finite index $\Gamma$ in $\Gamma_S$, and consider the homogeneous space $\Gamma\backslash\mathbf G(\mathbb Q_S)$. Let $Z(\mathbf G)$ be the center of $\mathbf G$. Then by passing to the quotient space $\mathbf G/Z(\mathbf G)$, we may further assume that $\mathbf G$ is semisimple. We will denote by $\pi_S:\mathbf G(\mathbb Q_S)\to\Gamma\backslash\mathbf G(\mathbb Q_S)$ be the natural quotient map.

\begin{theorem}\label{th61}
Let $\mathbf G$ be a connected semisimple algebraic group defined over $\mathbb Q$. Let $\mathbf H$ be a connected reductive $\mathbb Q$-subgroup without nontrivial $\mathbb Q$-characters, and assume that $\mathbf H$ contains a maximal torus in $\mathbf G$. Let $\rho_i:\mathbf H\to\mathbf G(i\in\mathbb N)$ be a sequence of $\mathbb Q$-homomorphisms defined by $\rho_i(x)=\gamma_ix\gamma_i^{-1}$ $(\gamma_i\in\Gamma)$ where $\Gamma$ is a subgroup of finite index in $\Gamma_S$, such that there exists no proper $\mathbb Q$-subgroup $\mathbf M$ containing infinitely many $\rho_i(\mathbf H)$. Let $H$ be a subgroup of finite index in $\mathbf H(\mathbb Q_S)$. Suppose that the sequence $\{\mu_{\Gamma\backslash\Gamma\rho_i(H)}\}$ converges to a probability measure $\mu$ as $i\to\infty$. Then $\mu$ is an algebraic measure on $\Gamma\backslash \mathbf G(\mathbb Q_S)$.
\end{theorem}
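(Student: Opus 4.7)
The plan is to adapt the strategy of Eskin--Mozes--Shah \cite{EMS96} and Gorodnik--Oh \cite{GO11} to the $S$-arithmetic setting, proceeding in two phases: first extract unipotent invariance of the limit measure $\mu$, and then use Ratner's measure classification theorem together with the $S$-arithmetic linearization technique to force $\mu$ to be algebraic.

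For the first phase, I would fix a small open compact subgroup $H_p \subset \mathbf{H}(\mathbb{Q}_p) \cap H$ for each non-Archimedean $p \in S_f$, and examine the Lie-theoretic magnitude of the operators $\Ad(\gamma_i)$. At each $p \in S_f$, consider the conjugated subgroup $\gamma_i H_p \gamma_i^{-1}$; at the Archimedean place consider the operator norm of $\Ad(\gamma_i)$ restricted to $\Lie(Z_\mathbf{G}(\mathbf{H})(\mathbb{R}))$. If one of these sequences is unbounded, Proposition~\ref{p51} or Proposition~\ref{p56} applies and $\mu$ inherits invariance under a one-parameter unipotent subgroup of some $\mathbf{G}(\mathbb{Q}_p)$. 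If \emph{all} of them are bounded, Propositions~\ref{p54} and \ref{p55} jointly imply that $\gamma_i$ stays in a bounded set modulo $Z_\mathbf{G}(\mathbf{H})(\mathbb{Q}_S)$. Passing to a subsequence and writing $\gamma_i = k_i z_i$ with $k_i \to k$ in a compact set and $z_i \in Z_\mathbf{G}(\mathbf{H})(\mathbb{Q}_S)$, the subgroups $\rho_i(\mathbf{H}) = k_i \mathbf{H} k_i^{-1}$ then collapse onto a single $\mathbb{Q}$-conjugate of $\mathbf{H}$; combined with the fact that $\gamma_i$ are rational points of $\Gamma$, this would place all but finitely many $\rho_i(\mathbf{H})$ inside a proper $\mathbb{Q}$-subgroup, contradicting the minimality hypothesis on $\mathbf{G}$ (except in the trivial case $\mathbf{H} = \mathbf{G}$).

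For the second phase, with $\mu$ invariant under some one-parameter unipotent subgroup $U$, I would apply the $S$-arithmetic Ratner measure classification theorem \cite{MT94, R98, Tom00} to decompose $\mu$ into $U$-ergodic components, each of which is the algebraic measure supported on a periodic orbit of some closed subgroup of $\mathbf{G}(\mathbb{Q}_S)$ generated by unipotents. To conclude $\mu$ is itself algebraic on a single orbit, the linearization method is employed: for each proper connected $\mathbb{Q}$-subgroup $\mathbf{L} \subsetneq \mathbf{G}$ drawn from the (countable) collection of subgroups relevant to Ratner's theorem, consider the exceptional locus
\begin{equation*}
N(\mathbf{L}, \mathbf{H}) = \{ g \in \mathbf{G} : g^{-1} \mathbf{H} g \subset \mathbf{L}\},
\end{equation*}
linearize via the representation of $\mathbf{G}$ on $\bigwedge^{\dim \mathbf{L}} \Lie(\mathbf{G})$ where $\mathbf{L}$ is the stabilizer of a rational vector $p_\mathbf{L}$, and show that along the orbit map $s \mapsto \pi_S(\rho_i(\exp_S(s)))$ the relevant coordinates are $(C,\alpha)$-good (by Proposition~\ref{p31}). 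Combined with the nondivergence estimates of \S\ref{nondiv}, this bounds the $\mu_{\Gamma\backslash\Gamma\rho_i(H)}$-mass of a neighborhood of the projected singular set by a uniform function of its thickness, provided the orbit of $p_\mathbf{L}$ under $\rho_i(H)$ does not degenerate. The hypothesis that no proper $\mathbb{Q}$-subgroup contains infinitely many $\rho_i(\mathbf{H})$ guarantees exactly the non-degeneracy needed for every candidate $\mathbf{L}$, so in the limit $\mu$ assigns zero mass to each such singular tube and hence is supported on the generic $\mathbf{G}$-orbit, giving $\mu = \mu_{\Gamma\backslash\mathbf{G}(\mathbb{Q}_S)^0}$ (possibly up to a translate).

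The main obstacle will be executing the linearization step in the product setting of $\mathbf{G}(\mathbb{Q}_S) = \prod_{p \in S} \mathbf{G}(\mathbb{Q}_p)$: one must organize the singular tubes and $\veps$-neighborhoods simultaneously across places, verify uniform $(C,\alpha)$-good estimates via \cite{KT07}, and handle the combinatorics of rational vectors in the linearizing representation whose $\Gamma$-orbits hit the tubes. A secondary technical point is the dichotomy at the start of the argument: one must ensure that the passage from bounded $\gamma_i$-orbits (modulo $Z_\mathbf{G}(\mathbf{H})(\mathbb{Q}_S)$) to a minimality contradiction really does use the $\mathbb{Q}$-structure correctly, since $\rho_i(\mathbf{H})$ is a $\mathbb{Q}$-subgroup precisely because $\gamma_i \in \Gamma \subset \mathbf{G}(\mathcal{O}_S)$.
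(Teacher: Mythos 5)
Your overall two-phase strategy (extract unipotent invariance, then linearize and apply $S$-arithmetic Ratner) matches the paper's plan, and your treatment of the unbounded dichotomy via Propositions~\ref{p51}, \ref{p54}, \ref{p55}, \ref{p56} is exactly the paper's first move. However, there are two genuine gaps.

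\emph{The bounded case is handled incorrectly.} You argue that if $\gamma_i$ is bounded modulo $Z_{\mathbf G}(\mathbf H)(\mathbb Q_S)$, then the $\rho_i(\mathbf H)$ collapse onto a single $\mathbb Q$-conjugate of $\mathbf H$, contradicting the minimality hypothesis. This does not work: the decomposition $\gamma_i = k_i z_i$ with $k_i$ in a fixed compact set and $z_i\in Z_{\mathbf G}(\mathbf H)(\mathbb Q_S)$ only holds over $\mathbb Q_S$, so $k_i\mathbf H k_i^{-1}$ need not be a $\mathbb Q$-group, and even if $k_i\to k$, convergence of the conjugates in the Hausdorff topology does not put infinitely many of them inside one proper $\mathbb Q$-subgroup. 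More to the point, no contradiction should be expected: the bounded case is a genuine, non-vacuous scenario and the theorem's conclusion still holds there. The paper resolves it directly --- one writes $\pi_S(\rho_i(H)) = \Gamma\backslash\Gamma H(a_{i,p}\gamma_{i,p}^{-1})$ with both $a_{i,p}$ and $\gamma_{i,p}$ bounded (using that $\mu_{\Gamma\backslash\Gamma\rho_i(H)}$ converges to a \emph{probability} measure to control the centralizer part), and extracts a subsequential limit that is a translate of $\mu_{\Gamma\backslash\Gamma H}$, which is algebraic. Your contradiction argument would make the bounded case vacuous and the theorem falsely stronger.

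\emph{The linearization phase misses the induction and overstates the conclusion.} You assert that the singular tubes receive zero mass, hence $\mu = \mu_{\Gamma\backslash\mathbf G(\mathbb Q_S)^0}$ up to a translate. The theorem only claims algebraicity, not equidistribution, and that distinction is the heart of the proof's second half. The paper uses $\mathcal N(L,W)=\{g : W\subset g^{-1}Lg\}$ (built from the one-parameter unipotent $W$, not from $\mathbf H$), picks $L\in\mathcal H^*$ with $\mu(\mathcal T_L(W))>0$, and applies the dichotomy in Proposition~\ref{p64}. When the second alternative holds, a delicate rational-rigidity argument (via Lemma~\ref{l62}, a finite determining set $\Sigma\subset\mathbf H(\mathbb Q)\cap B_k$, discreteness of $\mathbf G(\tfrac1k\mathcal O_S)w_i p_L$, and the minimality hypothesis) shows that $MT(L)$ is \emph{normal} in $\mathbf G$. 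One then quotients $\mathbf G$ by a finite-index normal subgroup $L_0\subset MT(L)(\mathbb Q_S)$ and inducts on $\dim\mathbf G$. Without identifying $MT(L)$ as normal and without the induction, the argument does not close; it would only show that $\mu$ is $L$-invariant, not that $\mu$ is algebraic. Your proposal also linearizes $N(\mathbf L,\mathbf H)=\{g : g^{-1}\mathbf H g\subset\mathbf L\}$ rather than $\mathcal N(L,W)$, which conflates the acting group $\mathbf H$ with the unipotent $W$ responsible for ergodicity; the $(C,\alpha)$-good estimates are applied to the $\rho_i(H)$-orbit of $p_L$ in both setups, but the singular set relevant to Ratner's theorem is governed by $W$.
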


Theorem~\ref{th12} is then an immediate corollary of Theorem~\ref{scmth42} and Theorem~\ref{th61}. The rest of this section will be devoted to the proof of Theorem~\ref{th61}.
  
\subsection{Prerequisites}\label{mr2}
Let $\mathbf M$ be an algebraic vareity defined over $\mathbb Q$, and $S$ a finite set of places of $\mathbb Q$ containing the Archimedean place. The Zariski topology on $\mathbf M(\mathbb Q_S)=\prod_{p\in S}\mathbf M(\mathbb Q_p)$ is defined to be the product of the Zariski topologies on $\mathbf M(\mathbb Q_p)$ ($p\in S$). On the other hand, the topologies on the local fields $\mathbb Q_p$ ($p\in S$) induce a topology on $\mathbf M(\mathbb Q_S)=\prod_{p\in S}\mathbf M(\mathbb Q_p)$, which we will refer to as Hausdorff topology on $\mathbf M(\mathbb Q_S)$.
 
In the following, a map $f:\mathbf M(\mathbb Q_S)\to\mathbb Q_S$ of the form $f=(f_p)_{p\in S},$ where $f_p$ is a $\mathbb Q_p$-valued function on $\mathbf M(\mathbb Q_p)$, will be called a $\mathbb Q_S$-valued function on $\mathbf M(\mathbb Q_S)=\prod_{p\in S}\mathbf M(\mathbb Q_p)$. The space of $\mathbb Q_S$-valued functions on $\mathbf M(\mathbb Q_S)$ is a $\mathbb Q_S$-module. A $\mathbb Q_S$-valued function on $\mathbf M(\mathbb Q_S)$ is called regular if $f_p$ is regular for every $p\in S$. 

The following lemma could be verified by \cite[Ch.1]{B91}.

\begin{lemma}\label{l62}
 Let $\rho_i:\mathbf H\to\mathbf G\;(i\in\mathbb N)$ be a sequence of $\mathbb Q$-homomorphisms defined by $\rho_i(x)=\gamma_ix\gamma_i^{-1}$, and $\mathbf V$ an affine variety on which $\mathbf G$ acts morphically. Then for any regular $\mathbb Q_S$-valued function $f$ on $\mathbf V(\mathbb Q_S)$, the maps $f(\rho_i|_{\mathbf H(\mathbb Q_S)}w)$ $(w\in\mathbf V(\mathbb Q_S),i\in\mathbb N)$ span a finitely generated $\mathbb Q_S$-module in the space of $\mathbb Q_S$-valued functions on $\mathbf H(\mathbb Q_S)$.
\end{lemma}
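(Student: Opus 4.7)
My plan closely mirrors the real-case argument (see \cite[Lemma 5.2]{EMS97}). Since $\mathbb Q_S=\prod_{p\in S}\mathbb Q_p$ and a regular $\mathbb Q_S$-valued function $f$ decomposes as $f=(f_p)_{p\in S}$ with each $f_p$ regular on $\mathbf V(\mathbb Q_p)$, I would first reduce to proving, for each $p\in S$ separately, that the $\mathbb Q_p$-valued functions $x\mapsto f_p(\rho_i(x)w)$ (for $w\in\mathbf V(\mathbb Q_p)$ and $i\in\mathbb N$) lie in a common finite-dimensional $\mathbb Q_p$-subspace of functions on $\mathbf H(\mathbb Q_p)$. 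The union over $p\in S$ of the resulting spanning sets then yields the desired finitely generated $\mathbb Q_S$-module.

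Fix $p\in S$. The action $\mathbf G\times\mathbf V\to\mathbf V$ being morphic, the coordinate ring $\mathbb Q_p[\mathbf V]$ is a locally finite $\mathbf G(\mathbb Q_p)$-module under the regular representation; in particular, $f_p$ lies in a finite-dimensional $\mathbf G$-invariant subspace $V_0\subset\mathbb Q_p[\mathbf V]$ with $\mathbb Q_p$-basis $f_1,\ldots,f_m$ (see, e.g., \cite[Ch.~I]{B91}). Because $V_0$ is $\mathbf G$-invariant and hence also $\mathbf H$-invariant, the corresponding matrix coefficients are regular functions on $\mathbf G$ and on $\mathbf H$ respectively. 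Consequently there exist scalars $c_k^{(i)}\in\mathbb Q_p$ and regular functions $b_{k\ell}\in\mathbb Q_p[\mathbf H]$ such that, for $u,v\in\mathbf V(\mathbb Q_p)$ and $x\in\mathbf H(\mathbb Q_p)$,
$$f_p(\gamma_i u)=\sum_{k=1}^m c_k^{(i)}f_k(u),\qquad f_k(xv)=\sum_{\ell=1}^m b_{k\ell}(x^{-1})f_\ell(v).$$

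Chaining these two identities with $u=x\gamma_i^{-1}w$ and $v=\gamma_i^{-1}w$, I would then obtain
$$f_p\bigl(\rho_i(x)w\bigr)=f_p\bigl(\gamma_i x\gamma_i^{-1}w\bigr)=\sum_{k,\ell=1}^m c_k^{(i)}\,f_\ell(\gamma_i^{-1}w)\,b_{k\ell}(x^{-1}),$$
so as a function of $x\in\mathbf H(\mathbb Q_p)$, each $F_{i,w}:=f_p(\rho_i(\cdot)w)$ lies in the $\mathbb Q_p$-span of the finite set $\{x\mapsto b_{k\ell}(x^{-1}):1\le k,\ell\le m\}$. This spanning set is manifestly independent of the indexing data $(i,w)$, which proves the single-place claim and hence the lemma.

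The main point I expect to have to verify carefully is that the locally finite subspace $V_0$ and all the matrix coefficients can indeed be taken to be defined over $\mathbb Q_p$ (or even over $\mathbb Q$), so that the scalars $c_k^{(i)}$ genuinely lie in $\mathbb Q_p$ and the $b_{k\ell}$ in $\mathbb Q_p[\mathbf H]$. This is essentially routine because $\mathbf G$, $\mathbf H$ and the action on $\mathbf V$ are all defined over $\mathbb Q$, so the whole construction can be carried out $\mathbb Q$-rationally; beyond this bookkeeping, the argument is purely formal.
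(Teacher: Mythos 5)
Your proof is correct and takes essentially the same approach the paper intends: the paper omits a proof and simply cites \cite[Ch.~I]{B91}, which is precisely the local-finiteness-of-the-regular-representation argument you carry out. Reducing place by place, placing $f_p$ in a finite-dimensional $\mathbf G$-invariant subspace of $\mathbb Q_p[\mathbf V]$, and chaining the two matrix-coefficient identities to express $f_p(\rho_i(\cdot)w)$ in the fixed span of $\{x\mapsto b_{k\ell}(x^{-1})\}$ is exactly the content of the reference, and your bookkeeping remark about $\mathbb Q$-rationality of the construction is the right thing to check (and is indeed routine since $\mathbf G$, $\mathbf H$, $\mathbf V$ and the action are all defined over $\mathbb Q$).
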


The following is an immediate corollary of \cite[Theorem 7.9]{PR94}. 

\begin{lemma}\label{c61}
Let $\mathbf H$ be as in Theorem \ref{th61}, and $S$ a finite set of places of $\mathbb Q$ containing the Archimedean place. Then there exists an open neighborhood of identity $W_{S_f}$ in $\mathbf H(\mathbb Q_{S_f})$ such that $\mathbf H(\mathbb Q)$ is dense in $\mathbf H(\mathbb R)\times W_{S_f}$.
\end{lemma}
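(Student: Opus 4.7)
The plan is to deduce the lemma from the approximation theorems for connected reductive $\mathbb{Q}$-groups in Chapter 7 of \cite{PR94}. The content of the statement is a joint density: arbitrary density of $\mathbf{H}(\mathbb{Q})$ in $\mathbf{H}(\mathbb{R})$ paired with density in some small neighborhood of the identity at the non-Archimedean places of $S$.

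First I would exploit the structural decomposition of $\mathbf{H}$. Writing $\mathbf{H}' = [\mathbf{H},\mathbf{H}]$ for the derived subgroup, $\tilde{\mathbf{H}}'$ for its simply connected cover, and $Z = Z(\mathbf{H})^{0}$ for the connected central torus, the hypothesis that $\mathbf{H}$ has no nontrivial $\mathbb{Q}$-characters forces $Z$ to be $\mathbb{Q}$-anisotropic and provides a central $\mathbb{Q}$-isogeny $Z \times \tilde{\mathbf{H}}' \to \mathbf{H}$. To each factor I will apply a separate approximation statement: the unconditional weak approximation for the simply connected semisimple factor $\tilde{\mathbf{H}}'$ (a theorem of Kneser--Platonov), giving density of $\tilde{\mathbf{H}}'(\mathbb{Q})$ in $\tilde{\mathbf{H}}'(\mathbb{Q}_{S})$; and, for the anisotropic torus $Z$, the fact that the closure of $Z(\mathbb{Q})$ in $Z(\mathbb{Q}_{S})$ is an open subgroup of finite index and hence contains an identity neighborhood. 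Pushing both forward through the isogeny and inverting it on a sufficiently small neighborhood of the identity in $\mathbf{H}(\mathbb{Q}_{S_f})$ then produces an open $W_{S_f}$ lying inside the $S_f$-projection of the closure of $\mathbf{H}(\mathbb{Q})$, while the real factor is filled out by real approximation for the connected reductive group $\mathbf{H}$.

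The main obstacle I expect is the bookkeeping required to synchronize the real and $p$-adic approximations across the isogeny: when we approximate a prescribed real target by an element of $\mathbf{H}(\mathbb{Q})$, the $p$-adic components must simultaneously stay inside $W_{S_f}$. Here the joint character of weak approximation for $\tilde{\mathbf{H}}'$ (density at all of the finitely many places of $S$ simultaneously) does the real work, and the anisotropic central torus $Z$ contributes only through the finite-index defect of its closure; this may force $W_{S_f}$ to be chosen smaller but does not obstruct the desired density, which is the content of the cited \cite[Theorem 7.9]{PR94}.
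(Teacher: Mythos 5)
The paper's proof of this lemma is literally the single sentence ``The following is an immediate corollary of \cite[Theorem 7.9]{PR94},'' and your proposal ultimately rests on the same citation, so in that sense you take essentially the same route. Your expansion via the central $\mathbb{Q}$-isogeny $Z\times\tilde{\mathbf{H}}'\to\mathbf{H}$, weak approximation for the simply connected factor, and finite-index closure for the torus is indeed the machinery behind PR's theorem, and as a heuristic for why the theorem applies it is on the right track.

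There is, however, a genuine gap in the step ``while the real factor is filled out by real approximation for the connected reductive group $\mathbf{H}$.'' Knowing that $\overline{\mathbf{H}(\mathbb{Q})}$ is an open finite-index subgroup of $\mathbf{H}(\mathbb{Q}_S)$ whose projection to $\mathbf{H}(\mathbb{R})$ is surjective (the latter being real approximation) does \emph{not} by itself imply $\mathbf{H}(\mathbb{R})\times\{e_{S_f}\}\subset\overline{\mathbf{H}(\mathbb{Q})}$, which is what the lemma asserts. An open finite-index subgroup of a product $G_1\times G_2$ can project onto both factors while meeting $G_1\times\{e\}$ only in a proper subgroup (think of the diagonal in $(\mathbb{Z}/2)\times(\mathbb{Z}/2)$); concretely, nothing in your listed ingredients rules out the ``diagonal obstruction'' that every $\gamma\in\mathbf{H}(\mathbb{Q})$ landing in a fixed non-identity component of $\mathbf{H}(\mathbb{R})$ is forced into a fixed non-trivial coset mod $W_{S_f}$ at the finite places. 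Isogeny plus weak approximation for $\tilde{\mathbf{H}}'$ plus finite index for $Z$ only yields $\mathbf{H}(\mathbb{R})^0\times W_{S_f}\subset\overline{\mathbf{H}(\mathbb{Q})}$; upgrading $\mathbf{H}(\mathbb{R})^0$ to $\mathbf{H}(\mathbb{R})$ with $e_{S_f}$ held fixed is precisely the non-trivial content of \cite[Theorem 7.9]{PR94} (equivalently, that the weak-approximation defect group $A(S,\mathbf{H})$ receives no contribution from the archimedean place). Your closing remark does acknowledge deferring to the theorem for this, so the proposal is not wrong, but the ``real approximation fills it out'' sentence should not be read as a step one can execute without the cited theorem.
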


\subsection{Invariant measures for unipotent flows and linearization}\label{mr3}
In this subsection, we briefly discuss invariant measures for unipotent flows and linearization in the $S$-arithmetic setting. One can refer to e.g. \cite{MT94,MT96,R95,R98,Tom00,GO11} for more details. Here we will follow the notation and the formulations in \cite{GO11}.

We say that a connected $\mathbb Q$-subgroup $\mathbf P$ of $\mathbf G$ is in class $\mathcal F$ ralative to $S$ if the radical of $\mathbf P$ is unipotent and every $\mathbb Q$-simple factor of $\mathbf P$ is $\mathbb Q_v$-isotropic for some $v\in S$. A closed subgroup $L$ of $\mathbf G(\mathbb Q_S)$ is in class $\mathcal H$ if there exists a connected $\mathbb Q$-subgroup $\mathbf P$ in class $\mathcal F$ relative to $S$ such that $L$ has a finite index in $\mathbf P(\mathbb Q_S)$ and $L_u$ acts ergodically on $\Gamma\backslash\Gamma L$ with respect to the $L$-invariant probability measure. Here $L_u$ is the subgroup in $L$ generated by unipotent subgroups.

For a closed subgroup $L$ of $\mathbf G(\mathbb Q_S)$, the Mumford-Tate subgroup, denoted by $MT(L)$, is defined to be the smallest connected $\mathbb Q$-subgroup $\mathbf G$ such that $$\bar L^0\subset\prod_{v\in S}MT(L)(\mathbb Q_v)$$ where $\bar L^0$ denotes the identity component of the Zariski closure of $L$ in $\mathbf G(\mathbb Q_S)$.

Let $X_S=\Gamma\backslash\mathbf G(\mathbb Q_S)$. Let $W$ be a closed subgroup of $\mathbf G(\mathbb Q_S)$ generated by one-parameter unipotent subgroups. For each $L\in\mathcal H$, define $$\mathcal N(L,W)=\{g\in\mathbf G(\mathbb Q_S):W\subset g^{-1}Lg\},\quad\mathcal S(L,W)=\bigcup_{M\in\mathcal H,MT(M)\subsetneq MT(L)}\mathcal N(M,W)$$ $$\mathcal T_L(W)=\pi_S(\mathcal N(L,W)-\mathcal S(L,W))$$ where $\pi_S:\mathbf G(\mathbb Q_S)\to\Gamma\backslash\mathbf G(\mathbb Q_S)$ denotes the canonical projection. Note that we have $$\mathcal N(L,W)=\{g\in\mathbf G(\mathbb Q_S):W\subset g^{-1}MT(L)(\mathbb Q_S)g\},$$ and for any $P,Q\in\mathcal H$ with $MT(P)=MT(Q)$, $$\mathcal N(P,W)=\mathcal N(Q,W),\quad\mathcal S(P,W)=\mathcal S(Q,W),\quad \mathcal T_P(W)=\mathcal T_Q(W).$$ By \cite[Lemma 6.10]{GO11}, for any $P,Q\in\mathcal H$, either $\mathcal T_P(W)\cap\mathcal T_Q(W)=\emptyset$ or $\mathcal T_P(W)=\mathcal T_Q(W)$.

Let $\mathcal F^*$ be the collection of $\Gamma$-conjugacy classes of Mumford-Tate subgroups of $L\in\mathcal H$. For each $[\mathbf L]\in\mathcal F^*$, choose one subgroup $L\in\mathcal H$ with $MT(L)=\mathbf L$, and the collection of such $L$'s will be denoted by $\mathcal H^*$. 

Let $L\in\mathcal H^*$. Let $\mathfrak g$ and $\mathfrak l$ denote the Lie algebras of $\mathbf G$ and $MT(L)$ respectively. For $d=\dim\mathfrak l$, consider the representation $$\wedge^d\Ad:\mathbf G\to\GL(\mathbf V_L)$$ where $\mathbf V_L=\wedge^d\mathfrak g$. Let $\mathbf V_L(\mathbb Q_S)=\prod_{p\in S}\mathbf V_L(\mathbb Q_p)$ and $p_L\in\wedge^d\mathfrak l(\mathbb Q)\in\mathbf V_L(\mathbb Q_S)$. Then we have a map $\eta_L:\mathbf G(\mathbb Q_S)\to\mathbf V_L(\mathbb Q_S)$ defined by $$\eta_L((g_p)_{p\in S})=(\wedge^d\Ad(g_p)p_L)_{p\in S}.$$ Let $\Gamma_L:=\{\gamma\in\mathbf G(\mathcal O_S): \gamma^{-1}MT(L)\gamma=MT(L)\}$ and denote by
\begin{align*}
\Gamma_L^0:=\{\gamma\in\mathbf G(\mathcal O_S):\eta_L(\gamma)=p_L\}=\{\gamma\in\Gamma_L:\det(\Ad(\gamma)|_\mathfrak l)=1\}.
\end{align*}
By definition, we have $\eta_L(\Gamma_L)\subset\mathcal O_S^\times\cdot p_L$ where $\mathcal O_S^\times$ denotes the group of units in $\mathcal O_S$.

Recall the notion of $S(v_0)$-small subsets of $\mathbf V_L(\mathbb Q_S)$. Fix $\delta>0$ such that for any $w\in S$, if $\alpha\in\mathcal O_S^\times$ satisfies $\max_{v\in S\setminus\{w\}}|1-\alpha|_v<\delta,$ then $\alpha=1$. Now let $v_0\in S$. A subset $C=\prod_{v\in S}C_v\subset \mathbf V_L(\mathbb Q_S)$ is $S(v_0)$-small if for any $v\in S\setminus\{v_0\}$ and $\alpha\in K_v^\times$, $\alpha C_v\cap C_v\neq\emptyset$ implies that $|1-\alpha|_v<\delta$. Note that for any $\alpha\in\mathcal O_S^\times$ and any $S(v_0)$-small subset $C$ of $\mathbf V_L(\mathbb Q_S)$, $\alpha C\cap C\neq\emptyset$ implies $\alpha=1.$ 

If $-p_L\notin\eta_L(\Gamma_L)$, we set $\overline{\mathbf V}_L(\mathbb Q_S):=\mathbf V_L(\mathbb Q_S);$ otherwise, set $\overline{\mathbf V}_L(\mathbb Q_S):=\mathbf V_L(\mathbb Q_S)/\{1,-1\}.$ Denote by $\bar\eta_L$ the composition map of $\eta_L$ with the quotient map $\mathbf V_L(\mathbb Q_S)\to\overline{\mathbf V}_L(\mathbb Q_S)$. Denote by $A_L$ the Zariski closure of $\bar\eta_L(\mathcal N(L,W))$ in $\overline{\mathbf V}_L(\mathbb Q_S)$. Then $\bar\eta_L^{-1}(A_L)=\mathcal N(L,W).$ For $D$ a compact $S(v_0)$-small subset of $A_L$ for some $v_0\in S$, we define $$\mathcal S(D)=\{g\in\bar\eta_L^{-1}(D):\gamma g\in\bar\eta_L^{-1}(D)\text{ for some }\gamma\in\Gamma-\Gamma_L\}.$$

We finish this subsection by proving the following lemma.

\begin{lemma}\label{l65}
Let $\mathbf M$ be a connected $\mathbb Q$-group in class $\mathcal F$, and $M\in\mathcal H$ a subgroup of finite index in $\mathbf M(\mathbb Q_S)$. Suppose that $\mathbf M(\mathbb R)^0$ is generated by one-parameter unipotent subgroups. Then $$\overline{\Gamma\backslash\Gamma(\mathbf M(\mathbb R)^0\times\{e_f\})}=\Gamma\backslash\Gamma M.$$
\end{lemma}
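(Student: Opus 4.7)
The inclusion $\overline{\Gamma\backslash\Gamma N}\subset\Gamma\backslash\Gamma M$ with $N:=\mathbf M(\mathbb R)^0\times\{e_f\}$ is automatic, because $\mathbf M$ has no nontrivial $\mathbb Q$-characters and hence the $M$-orbit $\Gamma\backslash\Gamma M$ is closed. Letting $\Gamma_M:=\Gamma\cap M$, my strategy for the reverse inclusion is to show that for every $m=(m_\infty,m_f)\in M$ one can find $\gamma_k\in\Gamma_M$ with $\gamma_{k,f}\to m_f$ in $\mathbf M(\mathbb Q_{S_f})$ and $\gamma_{k,\infty}$ eventually lying in the same connected component of $\mathbf M(\mathbb R)$ as $m_\infty$. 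Once such $\gamma_k$ are produced, the choice $n_k:=(\gamma_{k,\infty}^{-1}m_\infty,e_f)\in N$ gives $\gamma_k n_k=(m_\infty,\gamma_{k,f})\to m$ exactly, so that $\Gamma m\in\overline{\Gamma\backslash\Gamma N}$.

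Producing such $\gamma_k$ is a matter of $S$-arithmetic strong approximation for $\mathbf M$. Apply Lemma~\ref{c61} to $\mathbf M$ (its class-$\mathcal F$ and no-$\mathbb Q$-character hypotheses are met): there is an open neighborhood $W_{S_f}$ of $e_f$ such that $\mathbf M(\mathbb Q)$ is dense in $\mathbf M(\mathbb R)\times W_{S_f}$. The assumption that $\mathbf M(\mathbb R)^0$ is generated by one-parameter unipotent subgroups makes $\mathbf M$ isotropic at $\infty$, which together with the standard strong approximation for the simply connected cover of the semisimple quotient of $\mathbf M$ at the (finite) places outside $S$ allows me to restrict the rational approximants to $\mathbf M(\mathcal O_S)$, and hence (after intersecting with the finite-index subgroup $M$) to $\Gamma_M$. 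To handle an arbitrary $m_f$ outside $W_{S_f}$, I first use the same strong approximation to produce $q\in\Gamma_M$ with $q_f\in m_fW_{S_f}^{-1}$, then apply Lemma~\ref{c61} to the translated target $q^{-1}m$, whose $f$-component now lies in $W_{S_f}$; composing the resulting approximants with $q$ furnishes the $\gamma_k$ with the desired properties, including (b) because $\gamma_{k,\infty}$ ends up close to $m_\infty$ by continuity.

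The main obstacle I expect is the simultaneous coordination of the three demands: $S$-integrality (so that $\gamma_k\in\Gamma_M$), the prescribed real connected component, and proximity to $m_f$ in the $S_f$-adic coordinates. Lemma~\ref{c61} alone only controls the $S$-adic projection of $\mathbf M(\mathbb Q)$, so obtaining $\gamma_k\in\Gamma_M$ forces a supplementary invocation of strong approximation at the places outside $S$; the class-$\mathcal F$ hypothesis on $\mathbf M$ and the unipotent-generation hypothesis on $\mathbf M(\mathbb R)^0$ are precisely what make this combination available, and once the bookkeeping of these two density statements is in place the conclusion follows.
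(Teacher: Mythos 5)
Your proposal takes a genuinely different route from the paper, and in its current form has a gap.

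The paper's proof is short and ``soft'': it invokes Tomanov's $S$-arithmetic orbit-closure theorem (\cite[Theorem 2]{Tom00}) to write $\overline{\Gamma\backslash\Gamma\mathbf M(\mathbb R)^0}=\Gamma\backslash\Gamma F$ for some $F\in\mathcal H$ containing $\mathbf M(\mathbb R)^0$, observes that the containments $\mathbf M(\mathbb R)^0\subset F$ and $\Gamma\backslash\Gamma F\subset\Gamma\backslash\Gamma M$ force $MT(F)=MT(M)=\mathbf M$ via \cite[Lemma 6.7]{GO11}, and then deduces $\Gamma\backslash\Gamma F=\Gamma\backslash\Gamma M$ from the same lemma. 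You instead try to prove the hard inclusion $\Gamma\backslash\Gamma M\subset\overline{\Gamma\backslash\Gamma N}$ by a direct density argument, reducing it to finding $\gamma_k\in\Gamma_M$ whose $S_f$-component approximates $m_f$ while the $\infty$-component lands in the right connected component of $\mathbf M(\mathbb R)$. That reduction and the translation trick $\gamma_kn_k=(m_\infty,\gamma_{k,f})$ are both correct, and if the density statement were available this would give a Ratner-free proof.

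The gap is in producing the $\gamma_k$. First, Lemma~\ref{c61} does not apply to $\mathbf M$: it is stated for the group $\mathbf H$ of Theorem~\ref{th61}, which is assumed reductive and to contain a maximal torus of $\mathbf G$, whereas a group in class $\mathcal F$ has unipotent radical (possibly nontrivial) and need not contain a maximal torus, so $\mathbf M$ fails the hypotheses and you cannot quote that lemma. Second, and more seriously, the needed density in $\mathbf M(\mathbb Q_{S_f})$ fails when $\mathbf M$ is not simply connected: already for $\mathbf M=\PGL_2$ and $S=\{\infty,p\}$, the closure of the image of $\mathbf M(\mathcal O_S)$ in $\mathbf M(\mathbb Q_p)$ is a proper open subgroup of finite index, so $\Gamma_M N$ is dense only in a proper finite-index subgroup of $\mathbf M(\mathbb Q_S)$. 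Deciding that this subgroup is precisely the given $M$ is exactly where the hypothesis $M\in\mathcal H$---in particular the ergodicity of $M_u$ on $\Gamma\backslash\Gamma M$---must enter, and your argument never uses it. Passing to ``the simply connected cover of the semisimple quotient'' does not by itself repair this: the map from the simply connected cover to $\mathbf M$ is not surjective on $\mathbb Q_{S_f}$-points, so strong approximation upstairs only controls a subgroup downstairs, and identifying that subgroup with (the $S_f$-part of) $M$ is the missing step. Until this is supplied---most naturally by appealing to the very ergodicity/Mumford--Tate machinery that the paper invokes via \cite{Tom00} and \cite{GO11}---the proof is incomplete.
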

\begin{proof}
By \cite[Theorem 2]{Tom00}, we have $\overline{\Gamma\backslash\Gamma\mathbf M(\mathbb R)^0}=\Gamma\backslash\Gamma F$ for some $F\in\mathcal H$ and $\mathbf M(\mathbb R)^0\subset F.$ This implies that $MT(M)\subset MT(F).$ On the other hand, we also have $$\Gamma\backslash\Gamma F=\overline{\Gamma\backslash\Gamma\mathbf M(\mathbb R)^0}\subset \Gamma\backslash\Gamma M.$$ So by \cite[Lemma 6.7]{GO11}, $MT(F)\subset MT(M).$ This implies that $MT(L)=MT(M)=\mathbf M$. Hence by \cite[Lemma 6.7]{GO11}, $\Gamma\backslash\Gamma F=\Gamma\backslash\Gamma M.$ This completes the proof of the lemma.
\end{proof}

\subsection{An Auxiliary Proposition}\label{mr4}
Let $\mathbf H$ be as in Theorem \ref{th61}. Let $\rho_i:\mathbf H\to\mathbf G(i\in\mathbb N)$ be a sequence of $\mathbb Q$-homomorphisms defined by $\rho_i(x)=\gamma_ix\gamma_i^{-1}$ $(\gamma_i\in\Gamma)$ where $\Gamma$ is a subgroup of finite index in $\Gamma_S$, such that there exists no proper $\mathbb Q$-subgroup $\mathbf M$ containing infinitely many $\rho_i(\mathbf H).$ Now for the group $H$, suppose that the sequence of subsets $\{\rho_i(H)\}$ in $\prod_{p\in S_f}\mathbf G(\mathbb Q_p)$ is bounded, and the norms $\|\Ad\gamma_i\|$ of the operators $\Ad\gamma_i:\Lie(\mathbf T(\mathbb R))\to\Lie(\mathbf G(\mathbb R))$ are bounded. Then by Proposition \ref{p54} and Proposition \ref{p55}, for each $p\in S$, we can find a bounded sequence $\gamma_{i,p}\in\mathbf G(\mathbb Q_p)$ and a sequence $t_{i,p}\in Z_{\mathbf G}(\mathbf H)(\mathbb Q_p)$ such that $\gamma_i=\gamma_{i,p}t_{i,p}.$ Therefore, $$\pi_S(\rho_i(H))=\Gamma\backslash\Gamma H\gamma_i^{-1}=\Gamma\backslash\Gamma (t_{i,p}^{-1})_{p\in S}H(\gamma_{i,p}^{-1})_{p\in S}.$$ Since $\mu_{\Gamma\backslash\Gamma\rho_i(H)}$ converges to a probability measure, the sequence $\{\Gamma(t_{i,p})_{p\in S}\}$ is bounded in the space $\Gamma\backslash\Gamma Z_{\mathbf G}(\mathbf H)(\mathbb Q_S)$. Let $$\Gamma(t_{i,p}^{-1})_{p\in S}=\Gamma(a_{i,p})_{p\in S}$$ where $(a_{i,p})_{p\in S}$ is a bounded sequence in $Z_{\mathbf G}(\mathbf H)(\mathbb Q_S)$. Then $$\pi_S(\rho_i(H))=\Gamma\backslash\Gamma H(a_{i,p}\gamma_{i,p}^{-1}).$$ By passing to a subsequence, the limiting measure of $\mu_{\Gamma\backslash\Gamma\rho_i(H)}$ is a translate of the periodic measure $\mu_{\Gamma\backslash\Gamma H}$. Theorem \ref{th61} then follows in this case. 

Now we assume that either the sequence $\{\rho_i(H)\}$ is unbounded in $\prod_{p\in S_f}\mathbf G(\mathbb Q_p)$ or the norms $\|\Ad\gamma_i\|$ of the operators $\Ad\gamma_i:\Lie(\mathbf T(\mathbb R))\to\Lie(\mathbf G(\mathbb R))$ are unbounded. Then by Proposition \ref{p51} and Proposition \ref{p56}, after passing to a subsequence, the limiting probability measure $\mu$ of $\mu_{\Gamma\backslash\Gamma\rho_i(H)}$ is invariant under a one parameter unipotent subgroup $W$.

We continue the notation and the discussion in the previous subsection. According to \cite[Theorem 6.11]{GO11}, there is some subgroup $L\in\mathcal H^*$ such that $\mu(\mathcal T_L(W))>0$ and $\mu(\mathcal S(L,W))=0$. Then one can find a compact subset $C^*\subset \mathcal T_L(W)$ such that $\mu(C^*)=\alpha>0$. Since $\bar\eta_L$ never takes the zero vector in every place $p\in S$, we may take the compact subset $C^*$ so that the subset  $$C_L:=\bar\eta_L(C^*)\subset A_L$$ is contained in a subset $\prod_{v\in S}C_v$ of $\mathbf V_L(\mathbb Q_S)$ which is $S(v_0)$-small for every $v_0\in S$.

Let $\Omega$ be a fundamental domain of $\Gamma\backslash\Gamma H$ in $H$, and $\mu_H$ the Haar measure on $H$. Let $\Omega_c$ be a compact subset in $\Omega$ such that $$\mu_H(\Omega-\Omega_c)\leq\epsilon_0\mu_H(\Omega)$$ where $\epsilon_0<\alpha/2$. Let $\{B_k\}_{k=1}^N$ be a cover of $\Omega_c$ with open boxes of diameter $\delta_0$, where $\delta_0$ is as in Proposition \ref{p31}. Moreover, we assume that for any $B_k$ ($1\leq k\leq N$), one can find an element $g_k\in\mathbf G(\mathbb Q_S)$ and an open box $U_k$ of radius at most $\delta_0>0$ around $0$ in $\Lie(\mathbf H(\mathbb Q_S))=\prod_{p\in S}\Lie(\mathbf H(\mathbb Q_p))$ such that $\exp_S(U_k)=B_kg_k^{-1}.$ The orbit $\Gamma\backslash\Gamma\rho_i(H)$ is then equal to $$\Gamma\backslash\Gamma\rho_i(H)=\bigcup_{k=1}^N\Gamma\backslash\Gamma\rho_i(B_k)\cup\Gamma\backslash\Gamma\rho_i(\Omega-\Omega_c).$$ Note that $N$ depends only on $\epsilon_0$ and $\Omega_c$. By Proposition \ref{p31}, \cite[Corollary 3.3]{KT07} and the discussion in \S\ref{nondiv}, for each $\rho_i$ and $B_k$, one can find a bijective $(C,\alpha)$-good map $$\phi_i:U_k\to\rho_i(B_k)$$ where $C$ and $\alpha$ are uniform for all $i\in\mathbb N$ and $1\leq k\leq N$.
 
Write $A_L$ as $\prod_{p\in S} V(f^{(p)}_1,f^{(p)}_2,\dots,f^{(p)}_n)$, where $f^{(p)}_i$ $(1\leq i\leq n)$ are polynomials on $\mathbf V_L(\mathbb Q_p)$ $(p\in S)$ and $V(f_1^{(p)},f_2^{(p)},...,f_n^{(p)})$ is the zero set of $f_i^{(p)}$ $(i=1,2,...,n)$. We define $$W_p(R,\alpha)=\{x\in\mathbf V_L(\mathbb Q_p):\|x\|_p\leq R,\quad|g_i^{(p)}(x)|_p\leq\alpha\}.$$ Define $R_0:=\sup_{x\in C_L}d(x,0).$ Then $\prod_{p\in S} W_p(R_0,\alpha)$ is a compact neighborhood of $C_L$ in $\mathbf V_L(\mathbb Q_S)$. 

\begin{proposition}\label{p64}
Let $C_L=\bar\eta_L(C^*)\subset A_L$ and $\phi_i$ defined as above. Fix $B_k$ for some $1\leq k\leq N$ ($\phi_i(U_k)=\rho_i(B_k)$). Then there exists a closed subset $\mathcal S$ in $\pi_S(\mathcal S(L,W))$ satisfying the following property: for a given compact subset $\mathcal K$ in $X_S-\mathcal S$, one of the following holds:
\begin{enumerate}
\item For any $\epsilon>0$ there exists a neighborhood $\Psi$ of $C_L$ such that $$\mu(\{x\in U_k: \pi_S(\phi_i(x))\in\mathcal K\cap\pi_S(\overline\eta_L^{-1}(\Psi))\})\leq\epsilon\mu(U_k)$$ for infinitely many $i\in\mathbb N$.
\item There exists $R>0$ such that for any $\alpha>0$ and for all large enough $i\in\mathbb N$, one can find $w_i\in\Gamma\bar p_L$ with $$\rho_i (B_k)w_i\subset \prod_{p\in S} W_p(R,\alpha).$$
\end{enumerate}
\end{proposition}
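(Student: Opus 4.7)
The plan is to argue by contradiction: assume (1) fails and derive (2). Passing to a subsequence, we obtain $\epsilon_0>0$ such that for every bounded open neighborhood $\Psi$ of $C_L$ the set
\[
E_i(\Psi):=\{x\in U_k:\pi_S(\phi_i(x))\in\mathcal K\cap\pi_S(\bar\eta_L^{-1}(\Psi))\}
\]
has measure $>\epsilon_0\mu(U_k)$ for all large $i$. Fix a shrinking basis $\Psi_j\downarrow C_L$ of bounded neighborhoods that are $S(v_0)$-small for every $v_0\in S$ (as was arranged for $C_L$ itself). For each $x\in E_i(\Psi_j)$ pick $\gamma_{i,x}\in\Gamma$ with $\bar\eta_L(\gamma_{i,x}\phi_i(x))\in\Psi_j$. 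Because $\mathcal K$ is disjoint from $\mathcal S\supseteq\pi_S(\mathcal S(L,W))$, the definition of $\mathcal S(L,W)$ combined with the $S(v_0)$-smallness of $\Psi_j$ forces the coset $\gamma_{i,x}\Gamma_L$ to be uniquely determined on $E_i(\Psi_j)$; since $\Gamma_L$ fixes $\bar p_L$, the vector $w_{i,x}:=\gamma_{i,x}\bar p_L\in\Gamma\bar p_L$ is a well defined function of $x$.

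Next I would extract a single integer vector $w_i$ valid on a positive-measure subset. Writing $\phi_i(x)=\delta_x\tilde g_x$ with $\tilde g_x$ in a fixed compact lift $\widetilde{\mathcal K}$ of $\mathcal K$, the value $\bar\eta_L(\phi_i(x))=\delta_x\cdot\bar\eta_L(\tilde g_x)$ lies in a $\Gamma$-translate of the compact set $\bar\eta_L(\widetilde{\mathcal K})$. Since $\Gamma\cdot\bar p_L\subset\overline{\mathbf V}_L(\mathbb Q_S)$ is discrete under the algebraic representation, only finitely many $\Gamma$-translates of $\Psi_j$ meet $\bar\eta_L(\widetilde{\mathcal K})$, with a bound depending on $\mathcal K$ and $\Psi_j$ alone. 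A pigeonhole then produces $w_i\in\Gamma\bar p_L$ and $E_i'\subset E_i(\Psi_j)$ of measure $\ge\epsilon_1\mu(U_k)$ on which $w_{i,x}\equiv w_i$.

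To upgrade the bound from $E_i'$ to all of $U_k$ I would apply the $(C,\alpha)$-good machinery to the map $x\mapsto\phi_i(x)\cdot w_i$. Writing $w_i=\gamma_i\bar p_L$, on $E_i'$ we have $\bar\eta_L(\gamma_i^{-1}\phi_i(x))\in\Psi_j$, so $\phi_i(x)\cdot w_i$ lies in a bounded set whose diameter is controlled by $\Psi_j$ and $\widetilde{\mathcal K}$. Each coordinate of this map is a combination of terms of the form $s^l e^{\lambda s}$ in the exponential parameters, hence is $(C,\alpha)$-good with constants uniform in $i$ by Proposition~\ref{p31} and \cite[Corollary~3.3]{KT07}. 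The $(C,\alpha)$-good inequality then yields $\sup_{x\in U_k}\|\phi_i(x)\cdot w_i\|\le R$ for an $R$ depending only on $C,\alpha,\epsilon_1$ and $\widetilde{\mathcal K}$. For the polynomial vanishing part, the inclusion $C_L\subset A_L=\prod_{p\in S}V(f_1^{(p)},\dots,f_n^{(p)})$ makes $|f_j^{(p)}|_p$ arbitrarily small on $\Psi_j$ for $j$ large, and the $(C,\alpha)$-goodness of $f_j^{(p)}\circ(\phi_i(\cdot)\cdot w_i)$ propagates this smallness from $E_i'$ to all of $U_k$; taking $j=j(\alpha)$ large enough gives $|f_j^{(p)}(\phi_i(x)\cdot w_i)|_p\le\alpha$ on $U_k$. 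Together these yield $\rho_i(B_k)w_i=\phi_i(U_k)\cdot w_i\subset\prod_{p\in S}W_p(R,\alpha)$, which is (2).

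The main obstacle is the uniqueness/finiteness step. In the purely archimedean setting of \cite{EMS96} one extracts a single $w_i$ by connectedness and continuity arguments, but here the $p$-adic factors make $\phi_i(U_k)$ totally disconnected in several places and the parameterizations mix polynomial and exponential behaviour across places. The $S(v_0)$-small property built into the choice of $C_L$ in \S\ref{mr3}, together with the precise definition of $\mathcal S(L,W)$, is what pins down $\gamma_{i,x}\Gamma_L$ rigidly once $\mathcal K$ avoids $\mathcal S$; keeping the pigeonhole bound and the $(C,\alpha)$-good constants uniform in $i$ (and in $\alpha$) is the technical crux of the proposition.
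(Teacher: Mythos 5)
Your argument runs in the opposite direction from the paper's: you assume the measure bound in (1) fails and try to extract a single $w_i\in\Gamma\bar p_L$, whereas the paper assumes (2) fails, fixes $R,\alpha$, sets $\mathcal S=\bigcup_{p\in S}\mathcal S(D(p))$ with $D(p)=\prod_{v\neq p}C_v\times W_p(R,\alpha)$, and proves (1) directly. The decisive gap in your route is the finiteness claim feeding the pigeonhole: you assert that ``only finitely many $\Gamma$-translates of $\Psi_j$ meet $\bar\eta_L(\widetilde{\mathcal K})$, with a bound depending on $\mathcal K$ and $\Psi_j$ alone,'' justified by discreteness of $\Gamma\bar p_L$. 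But discreteness of the single orbit $\Gamma\bar p_L$ only bounds how many $\gamma$ can send the point $\bar p_L$ into a fixed compact set; it says nothing about how many $\gamma\in\Gamma$ can move the whole compact set $\bar\eta_L(\widetilde{\mathcal K})$ into $\Psi_j$. The map $\bar\eta_L$ is far from proper (each fiber contains an entire right $\mathrm{Stab}(p_L)$-coset, which is noncompact), so for a fixed $i$ the set $\{w\in\Gamma\bar p_L : \phi_i(x)w\in\Psi_j\ \text{for some}\ x\in U_k\}$ may well be infinite, and no uniform pigeonhole constant exists. The $S(v_0)$-small mechanism and the hypothesis $\mathcal K\cap\mathcal S=\emptyset$ give you \emph{uniqueness} of the coset $\gamma_{i,x}\Gamma_L$ for each fixed $x$, not \emph{finiteness} of the collection as $x$ ranges over $U_k$; these are very different statements.

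The paper resolves exactly this difficulty without any pigeonhole. It bounds $\mu(\{x\in U_k:\pi_S(\phi_i(x))\in\mathcal K\cap\pi_S(\bar\eta_L^{-1}(\Psi))\})$ by summing over \emph{all} $w\in\mathcal F_p$ (possibly infinitely many), and the key input is \cite[Proposition 6.16]{GO11}: the maximal balls $B_{s,w}$ on which $\phi_i(\cdot)w$ stays in $\overline{\Psi_p(p)}$, associated to different $w$'s, are pairwise disjoint (in the real place, cover at most twice), a consequence of the $S(v_0)$-smallness of $D(p)$ and $\mathcal K\cap\mathcal S(D(p))=\emptyset$. Then $(C,\alpha)$-goodness of $\phi_i(\cdot)w$ bounds the fraction of each such ball on which the map lands in the smaller set $\Psi$, and summing over the disjoint cover produces the estimate $\leq\epsilon\mu(U_k)$. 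So where you try to reduce to a single $w_i$ and then push the constraint out to all of $U_k$, the paper carries all the $w$'s simultaneously and uses the geometry of their disjoint covers; the latter is the correct mechanism here. A secondary inaccuracy: you write $\mathcal S\supseteq\pi_S(\mathcal S(L,W))$, but the statement requires $\mathcal S\subset\pi_S(\mathcal S(L,W))$; your uniqueness step should only ever invoke disjointness of $\mathcal K$ from the explicitly constructed $\pi_S(\mathcal S(D))$, not from the full (possibly dense) set $\pi_S(\mathcal S(L,W))$.
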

\begin{proof}
Suppose that the second claim does not hold. Let $\epsilon>0$, and take $R>0$ sufficiently large such that $R_0/R\leq\epsilon$. Then we can find $\alpha>0$ such that for infinitely many $i\in\mathbb N$ $$\rho_i(B_k)w\not\subset \prod_{p\in S} W_p(R,\alpha)$$ for any $w\in\Gamma\bar p_L$. In the sequel, we fix such an $R$.

For any $p\in S$ define $$D(p)=\prod_{v\neq p} C_{v}\times W_p(R,\alpha).$$ Then $D(p)$ is a compact $S(p)$-small subset which contains $\prod_{v\in S} C_v$. Let $\mathcal S:=\bigcup_{p\in S}\mathcal S(D(p)).$ For each $p\in S$ we take a small neighborhood of $D(p)$ $$\Psi(p)=\prod_{v\in S}\Psi_v(p)$$ as in \cite[Proposition 6.16]{GO11}, and define $$\Psi=(\prod_{p\in S} W_p(R_0,\epsilon\alpha))\cap\bigcap_{p\in S} \Psi(p).$$ We show that the subset $\Psi$ satisfies the first claim in the proposition.

Fix a compact $\mathcal K$ in $X_S-\mathcal S$. For $p\in S$, denote by $$\mathcal F_p=\{w\in\Gamma\bar p_L:\rho_i(B_k)w\not\subset \prod_{v\in S,v\neq p}\mathbf V_L(\mathbb Q_v)\times W_p(R,\alpha)\}.$$ By our assumption, $\bigcup_{p\in S}\mathcal F_p=\Gamma\bar p_L$ and we have
\begin{align*}
&\mu(\{x\in U_k: \pi_S(\phi_i(x))\in\mathcal K\cap\pi_S(\overline\eta_L^{-1}(\Psi))\})\\
=&\mu(\{x\in U_k:\pi_S(\phi_i(x))\in\mathcal K,\; \phi_i(x)\Gamma\bar p_L\cap\Psi\neq\emptyset\})\\
\leq&\sum_{p\in S}\mu(\{x\in U_k:\pi_S(\phi_i(x))\in\mathcal K,\; \phi_i(x)\mathcal F_p\cap\Psi\neq\emptyset\}).
\end{align*}
In the sequal, we will estimate $$\mu(\{x\in U_k:\pi_S(\phi_i(x))\in\mathcal K,\; \phi_i(x)\mathcal F_p\cap\Psi\neq\emptyset\})$$ for each $p\in S$.

For convenience, for any $v\in S$, we write $\pi_v$ for the natural projection $\Lie(\mathbf H(\mathbb Q_S))\to\Lie(\mathbf H(\mathbb Q_v))$, and $\pi^v$ for the natural projection from $\Lie(\mathbf H(\mathbb Q_S))$ to $\Lie(\mathbf H(\mathbb Q_{S\setminus\{v\}}))$. We denote by $\mu_{v}$ the Haar measure on $\Lie(\mathbf H(\mathbb Q_v))$, and $\mu_S$ the Haar measure on $\Lie(\mathbf H(\mathbb Q_S))$.

Case 1: $p\in S_f$. Define $$\mathcal S_p=\{y\in\Lie(\mathbf H(\mathbb Q_{S\setminus\{p\}})): \exists x\in U_k\text{ such that }\pi_S(\phi_i(x))\in\mathcal K,\;\phi_i(x)\mathcal F_p\cap\Psi\neq\emptyset,\;\pi^p(x)=y\}.$$ For $y\in\mathcal S_p$ and $w\in\mathcal F_p$, let $$I_w(y)=\{s\in\Lie(\mathbf H(\mathbb Q_p)):\exists x\in U_k\text{ such that }\;\pi_S(\phi_i(x))\in\mathcal K,\; \phi_i(x)w\in\Psi, \pi^p(x)=y,\pi_p(x)=s\}.$$ For any $s\in I_w(y)$, we denote by $B_{s,w}(y)$ the maximal ball in $U_k\cap\Lie(\mathbf H(\mathbb Q_p))$ containing $s$ such that 
\begin{enumerate}
\item $\phi_i(B_{s,w}(y))w\subset \overline{\Psi_p(p)}.$
\item There exists $s'\in B_{s,w}(y)$ with $\phi_i(s')w\in\overline{\Psi_p(p)}\setminus\Psi_p(p).$
\end{enumerate}
By definition of $\mathcal F_p$, such $B_{s,w}(y)$ exists and by \cite[Proposition 6.16]{GO11} $$\{B_{s,w}(y): w\in\mathcal F_p, s\in I_w(y)\}$$ is a collection of disjoint balls in $U_k\cap\Lie(\mathbf H(\mathbb Q_p))$, which we rewrite as $\{B_i(w,y)\}_{i\in\mathbb N}.$ Now since $\phi_i|_{U_k}$ is $(C,\alpha)$-good, for $y\in\mathcal S_p$ and $w\in\mathcal F_p$
\begin{align*}
&\mu_p(I_w(y))\\
\leq&\sum_{i=1}^\infty\mu_p(\{s'\in B_i(w,y): \exists x\in U_k\textup{ such that }\phi_i(x)w\in\Psi,\pi_p(x)=s',\pi^p(x)=y\})\\
\leq&\sum_{i=1}^\infty\epsilon\mu_p(B_i(w,y)).
\end{align*}
Then we have
\begin{align*}
&\mu_S(\{x\in U_k:\pi(\phi_i(x))\in\mathcal K,\; \phi_i(x)\mathcal F_p\cap\Psi\neq\emptyset\})\\
\leq&\sum_{w\in\mathcal F_p}\mu_S(\{x\in U_k:\pi(\phi_i(x))\in\mathcal K,\; \phi_i(x)w\in\Psi\})\\
=&\sum_{w\in\mathcal F_p}\int_{y\in\mathcal S_p}\mu_p(I_w(y))dy\leq\int_{y\in\mathcal S_p}\sum_{w\in\mathcal F_p}\sum_{i=1}^\infty\epsilon\mu_p(B_i(w,y))dy\\
\leq&\int_{y\in\mathcal S_p}\epsilon\mu_p(U_k\cap\Lie(\mathbf H(\mathbb Q_p)))dy\leq\epsilon\mu_S(U_k).
\end{align*}

Case 2: $p=\infty$. Define $$\mathcal S_\infty=\{y\in\Lie(\mathbf H(\mathbb Q_{S\setminus\{\infty\}})): \exists x\in U_k,\pi_S(\phi_i(x))\in\mathcal K,\;\phi_i(x)\mathcal F_\infty\cap\Psi\neq\emptyset\text{ such that }\pi^\infty(x)=y\}.$$ For $y\in\mathcal S_\infty$ let $$I(y)=\{s\in\Lie(\mathbf H(\mathbb R)): \exists x\in U_k,\;\pi(\phi_i(x))\in\mathcal K,\;\phi_i(x)\mathcal F_\infty\cap\Psi\neq\emptyset,\pi_\infty(x)=s,\pi^\infty(x)=y\}.$$ Now fix $y\in\mathcal S_\infty$ and choose $s_0\in I(y)$. By \cite[Proposition 6.16]{GO11}, one can find $w_0\in\Gamma\bar p_L$ such that $$\pi_S(\phi_i(s_0,y))\in\mathcal K,\;\phi_i(s_0,y)w_0\in\Psi$$ and for other $w\in\Gamma\bar p_L- \{w_0\}$, we have $\phi_i(s_0,y)w\notin\Psi(\infty).$ We split $I(y)$ into two subsets $$I(y,\mathcal F_\infty-\{w_0\}):=\{s\in\Lie(\mathbf H(\mathbb R)):(s,y)\in U_k,\;\pi_S(\phi_i(s,y))\in\mathcal K,\; \phi_i(s,y)(\mathcal F_\infty-\{w_0\})\cap\Psi\neq\emptyset\},$$ and $$I(y,w_0):=\{s\in\Lie(\mathbf H(\mathbb R)):(s,y)\in U_k,\;\pi_S(\phi_i(s,y))\in\mathcal K,\; \phi_i(s,y)w_0\in\Psi\}.$$

We first consider $I(y,\mathcal F_\infty-\{w_0\})$. Let $\vec v\in\Lie(\mathbf H(\mathbb R))$ and $\|\vec v\|=1$. Define $$I_{s_0}(\vec v):=\{x\in[0,\delta_0]:s_0+x\vec v\in U_k\cap\Lie(\mathbf H(\mathbb R))\}.$$ For $w\in\Gamma\bar p_L-\{w_0\}$, define $$I_{s_0,w}(\vec v):=\{x\in I_{s_0}(\vec v):\pi_S(\phi_i(s_0+x\vec v,y))\in\mathcal K,\;\phi_i(s_0+x\vec v,y)w\in\Psi\}.$$ For any $x\in I_{s_0,w}(\vec v)$, we denote by $B_{x,w}(\vec v)$ the maximal interval in $I_{s_0}(\vec v)$ containing $x$ such that 
\begin{enumerate}
\item $\phi_i(s_0+B_{x,w}(\vec v)\vec v,y)w\subset \overline{\Psi(\infty)}.$
\item There exists $x'\in B_{x,w}(\vec v)$ with $\phi_i(s_0+x'\vec v,y)w\in\overline{\Psi(\infty)}\setminus\Psi(\infty).$ 
\end{enumerate}
By definition of $\mathcal F_\infty$ and our choice of $s_0$ and $w_0$, such an interval $B_{x,w}(\vec v)$ exists, and by \cite[Proposition 6.16]{GO11} $$\{B_{x,w}(\vec v): w\in\mathcal F_\infty-\{w_0\}, x\in I_{s_0,w}(\vec v)\}$$ is a collection of intervals which cover $I_{s_0}(\vec v)$ at most twice. We rewrite this collection as $\{B_i(w,\vec v)\}_{i\in\mathbb N}$. Now since $\phi_i|_{U_k}$ is $(C,\alpha)$-good, for any $y\in\mathcal S_\infty$ and $\vec v\in S_1$
\begin{align*}
\sum_{w\in\mathcal F_\infty-\{w_0\}}\mu(I_{s_0,w}(\vec v))\leq&\sum_{w\in\mathcal F_\infty-\{w\}}\sum_{i=1}^\infty\mu(\{x'\in B_i(w,\vec v): \phi_i(s_0+x'\vec v,y)w\in\Psi\})\\
\leq&\sum_{w\in\mathcal F_\infty-\{w\}}\sum_{i=1}^\infty\epsilon\mu(B_i(w,\vec v))\leq2\epsilon\mu(I_{s_0}(\vec v))\leq2\epsilon\delta_0.
\end{align*}
 
We have
\begin{align*}
\mu(I(y,\mathcal F-\{w_0\}))\leq&\sum_{w\in\mathcal F_\infty-\{w_0\}}\int_{S^1}\int_{r\in[0,\delta_0]}\chi_{I_{s_0,w}(\vec v)}(r)r^{l-1}drd\vec{v}\\
\leq&\int_{S^1}\sum_{w\in\mathcal F_\infty-\{w_0\}}\mu(I_{s_0,w}(\vec v))\delta_0^{l-1}d\vec{v}\leq2\epsilon\delta_0^l=2\epsilon\mu(U_k\cap\Lie(\mathbf H(\mathbb R)))
\end{align*}
where $l=\dim\mathbf H$.

For $I(y,w_0)$, by our assumption on $\mathcal F_\infty$, one can find $s_1\in U_k$ such that $\phi_i(s_1,y)w_0\bar p_L\notin\Psi(\infty).$ We can then repeat the arguments above by replacing $s_0$ and $\mathcal F_\infty-\{w_0\}$ with $s_1$ and $\{w_0\}$, and obtain $\mu(I(y,w_0))\leq2\epsilon\mu(B_k).$
Therefore, we have
\begin{align*}
&\mu(\{x\in U_k:\pi(\phi_i(x))\in\mathcal K,\; \phi_i(x)\mathcal F_\infty\cap\Psi\neq\emptyset\})\\
\leq&\mu(\{x\in U_k:\pi(\phi_i(x))\in\mathcal K,\; \phi_i(x)(\mathcal F_\infty-\{w_0\})\cap\Psi\neq\emptyset\})\\
&\quad+\mu(\{x\in U_k:\pi(\phi_i(x))\in\mathcal K,\; \phi_i(x)w_0\in\Psi\})\\
\leq&\int_{y\in\mathcal S_\infty}\mu_\infty(I(y,\mathcal F-\{w_0\}))dy+\int_{y\in\mathcal S_\infty}\mu_\infty(I(y,w_0))dy\\
\leq&\int_{y\in\mathcal S_\infty}4\epsilon\mu_\infty(U_k\cap\Lie(\mathbf H(\mathbb R)))dy\leq4\epsilon\mu_S(U_k).
\end{align*}
Combining Case 1 and Case 2, we have
\begin{align*}
&\mu(\{x\in U_k: \pi(\phi_i(x))\in\mathcal K\cap\pi(\overline\eta_L^{-1}(\Psi))\})\\
\leq&\sum_{p\in S}\mu(\{x\in U_k:\pi(\phi_i(x))\in\mathcal K,\; \phi_i(x)\mathcal F_p\cap\Psi\neq\emptyset\})\\
\leq&\sum_{p\in S_f}\epsilon\mu(U_k)+4\epsilon\mu(U_k)\leq4\epsilon|S|\mu(U_k).
\end{align*}
We complete the proof of the proposition by replacing $4\epsilon|S|$ with $\epsilon$.
\end{proof}
 
\subsection{Proof of Theorem \ref{th61}}\label{mr5}
We will follow the arguments in \cite[\S 4]{EMS96} and \cite{EMS98}. Let $\mu$ be the limiting measure of $\mu_{\Gamma\backslash\Gamma\rho_i(H)}$. We continue the assumption that either the sequence $\{\rho_i(H)\}$ is unbounded in $\prod_{p\in S_f}\mathbf G(\mathbb Q_p)$ or the norms of the operators $\Ad\gamma_i:\Lie(\mathbf T(\mathbb R))\to\Lie(\mathbf G(\mathbb R))$ are unbound. Then $\mu$ is invariant under a one-parameter unipotent subgroup $W$. Let $L$ be a subgroup in $\mathcal H^*$ such that $\mu(\mathcal T_L(W))>0$ and $\mu(\mathcal S(L,W))=0$. Then one can find a compact subset $C\subset \mathcal T_L(W)$ such that $\mu(C)=\alpha>0$, and $C_L:=\bar\eta_L(C)\subset A_L$ is contained in a subset $\prod_{v\in S}C_v$ of $\mathbf V_L(\mathbb Q_S)$ which is $S(v_0)$-small for every $v_0\in S$. 

Let $\mathcal K$ be a compact neighborhood of $C$. The positivity of the measure $\mu(C)$ implies that the first claim in Proposition \ref{p64} does not hold for some $B_k$ ($1\leq k\leq N$). In other words, there exist $B_k$, $\alpha_i\to0$ and $w_i\in\Gamma$ such that $$\rho_i(B_k)w_i\bar p_L\subset\Phi_i$$ where $\Phi_i=\prod_{p\in S} W_p(R,\alpha_i)$ is a decreasing sequence of relatively compact subsets in $\bar V_L$ with $\cap_{i=1}^\infty\Phi_i\subset A_L$. 

Let $\mathcal S$ be the collection of all $\mathbb Q_S$-valued functions on $B_k$ of the form $$\omega=(\omega_p)_{p\in S}\mapsto (f_p(\rho_i(\omega_p)wp_L))_{p\in S},\quad\omega\in B_k$$ where $w\in\Gamma, i\in\mathbb N$ and $f_p$ is a linear functional on $\mathbf V_L(\mathbb Q_p)$ for each $p\in S$. By Lemma \ref{l62}, $\mathcal S$ spans a finitely generated $\mathbb Q_S$-module in the space of functions on $B_k$. Therefore, by \cite[Lemma 4.1]{EMS97}, there exists a finite set $\Sigma\subset B_k$ such that for any $\mathbb Q_S$-valued function $\{\phi_p\}_{p\in S}\in\mathcal S$, if $\{\phi_p:p\in S\}$ vanishes on $\Sigma$, then $\{\phi_p\}_{p\in S}=0$ on $B_k$. Note that since $\phi_p$ $(p\in S)$ are regular and $B_k$ are Zariski dense in $\mathbf H(\mathbb Q_S)$, this indeed implies $\{\phi_p\}_{p\in S}=0$ on $\mathbf H(\mathbb Q_S)$. By Lemma \ref{c61}, $\mathbf H(\mathbb Q)\cap B_k$ is dense in $B_k$. So we may assume that $\Sigma\subset\mathbf H(\mathbb Q_S)\cap B_k$.

Now for any $s\in\Sigma$ and $i\in\mathbb N$, $\rho_i(s)w_ip_L\subset\Phi_1.$ One can find $k\in\mathbb N$ such that $$\{\rho_i(s):i\in\mathbb N, s\in\Sigma\}\subset\mathbf G(\frac1k\mathcal O_S).$$ Since $\mathbf G(\frac1k\mathcal O_S)w_ip_L$ is a discrete subset of $\bar{\mathbf V}_L(\mathbb Q_S)$ and $\Phi_1$ is bounded, by passing to a subsequence, we have $\rho_i(s)w_ip_L=\rho_1(s)w_1p_L$ for all $i\in\mathbb N$ and $s\in\Sigma$. Therefore, by our choice of $\Sigma$, for all $\omega\in\mathbf H(\mathbb Q_S)$, we have $\rho_i(\omega)w_ip_L=\rho_1(\omega)w_1p_L.$ Now set $\omega=e_S$, we have $w_i p_L=w_1 p_L$. Thus $$\rho_i(\omega)w_1p_L=\rho_1(\omega)w_1p_L$$ for all $i\in\mathbb N$ and $\omega\in\mathbf H(\mathbb Q_S)$. Consequently, $w_1^{-1}\rho_1(\omega)^{-1}\rho_i(\omega)w_1\in N(MT(L))$ for all $i\in\mathbb N$ and $\omega\in\mathbf H(\mathbb Q)$, where $N(MT(L))$ is the normalizer of $MT(L)$ in $\mathbf G$. 

Without loss of generality, in the following, one may assume $w_1=e_S$. Therefore $$\rho_1(\omega)^{-1}\rho_i(\omega)\in N(MT(L)),\quad \omega\in\mathbf H(\mathbb Q).$$ We prove that $N(MT(L))$ equals $\mathbf G$. Let $\mathbf F$ be the smallest algebraic subgroup of $\mathbf G$ containing $\rho_1(\omega)^{-1}\rho_i(\omega)$ $(\omega\in\mathbf H(\mathbb Q)).$ Then $\mathbf F$ is defined over $\mathbb Q$. Let $\omega_1,\omega_2\in\mathbf H(\mathbb Q)$. We compute 
\begin{align*}
\rho_1(\omega_1)^{-1}(\rho_1(\omega_2)^{-1}\rho_i(\omega_2))\rho_1(\omega_1)=(\rho_1(\omega_2\omega_1)^{-1}\rho_i(\omega_2\omega_1))(\rho_1(\omega_1)^{-1}\rho_i(\omega_1))^{-1}\in\mathbf F(\mathbb Q).
\end{align*}
 This implies that $\rho_1(\mathbf H)\subset N(\mathbf F)$ where $N(\mathbf F)$ is the normalizer of $\mathbf F$, and $\rho_1(\mathbf H)\mathbf F$ is a $\mathbb Q$-subgroup. Since $\rho_i(\mathbf H)\subset \rho_1(\mathbf H)\mathbf F$ for all $i\in\mathbb N$ and $\mathbf G$ is the smallest $\mathbb Q$-group containing $\rho_i(\mathbf H)$ $(i\in\mathbb N)$, we have $\mathbf G=\rho_1(\mathbf H)\mathbf F.$ Hence $\mathbf F$ is normal in $\mathbf G$. 
 
By the definition of $A_L$, we know that $\rho_i(\omega)\in\mathcal N(L,W)\;(\omega\in\mathbf H(\mathbb Q_S))$. We have $$\mathbf G(\mathbb Q_S)=\mathbf F(\mathbb Q_S)\rho_1(\mathbf H(\mathbb Q_S))\subset\mathbf F(\mathbb Q_S)\mathcal N(L,W)=\mathcal N(L,W).$$ Let $P$ be the closed subgroup generated by $gWg^{-1}\;(g\in\mathbf G(\mathbb Q_S))$. Then we have $$\overline{\Gamma\backslash\Gamma P}=\Gamma\backslash\Gamma L.$$ For any $\omega\in\mathbf H(\mathbb Q)$, $\rho_1(\omega)$ normalizes $P$ and hence $$P\subset\rho_1(\omega)L\rho_1(\omega)^{-1},\quad\overline{\Gamma\backslash\Gamma P}\subset\Gamma\backslash\Gamma(\rho_1(\omega)L\rho_1(\omega)^{-1}).$$ This implies that $\rho_1(\omega)MT(L)\rho_1(\omega)^{-1}=MT(L)$ and hence $MT(L)$ is normal in $\mathbf G.$

Since $\mu(\pi_S(\mathcal S(L,W)))=0$, by \cite[Theorem 6.11]{GO11}, every $W$-ergodic component of $\mu$ is $L$-invariant, and hence so is $\mu$. Since $\mathbf G$ is $\mathbb Q$-semisimple, $MT(L)$ is an almost direct product of $\mathbb Q$-factors of $\mathbf G$. Since $L$ is a subgroup of finite index in $MT(L)(\mathbb Q_S)$, we may choose $L_0$ a subgroup of finite index in $MT(L)(\mathbb Q_S)$ such that $L_0$ is normal in $\mathbf G(\mathbb Q_S)$ and $L_0\subset L$. Hence $\mu$ is $L_0$-invariant. 

Now consider the quotient space $\overline X=(\Gamma/(\Gamma\cap L_0))\backslash(\mathbf G(\mathbb Q_S)/L_0)$, and denote by $\overline{\mu}_{\Gamma\backslash\Gamma\rho_i(H)}$ the pushforward of the measure $\mu_{\Gamma\backslash\Gamma\rho_i(H)}$ via the map $\Gamma\backslash\mathbf G(\mathbb Q_S)\to\overline X$. Since $\mathbf H$ is connected, we have $$\rho_i(H)/(L_0\cap\rho_i(H))\subset\rho_i(\mathbf H(\mathbb Q_S))/(\rho_i(\mathbf H(\mathbb Q_S))\cap L_0)\subset\mathbf G(\mathbb Q_S)/MT(L)(\mathbb Q_S)$$ and the measure $\overline{\mu}_{\Gamma\backslash\Gamma\rho_i(H)}$ is indeed supported on the space $(\Gamma/(\Gamma\cap L_0))\backslash(\mathbf G(\mathbb Q_S)/MT(L)(\mathbb Q_S))$. We complete the proof of Theorem \ref{th61} by induction on  $\dim\mathbf G$.

\section{Correspondence between $\Gamma_\infty\backslash\mathbf G(\mathbb R)$ and $\Gamma_S\backslash\mathbf G(\mathbb Q_S)$: Proof of Theorem \ref{th11}}\label{cor}
In this section, we prove Theorem~\ref{scmr} which is a sharpening of Theorem~\ref{th12} in the sense that it gives explicit algebraic conditions on the deforming sequence that ensures equidistribution in the ambient space. This is done in a special case  where $\mathbf G$ an $\mathbb R$-split semisimple group defined over $\mathbb Q$ and $\mathbf H$ a maximal $\mathbb R$-split and $\mathbb Q$-anisotropic torus in $\mathbf G$.
As a corollary of Theorem~\ref{scmr}, we will deduce Theorem~\ref{th11}.
\subsection{Theorem~\ref{th12} for an $\mathbb R$-split semisimple group $\mathbf G$ and $\mathbf H$ a maximal $\mathbb R$-split and $\mathbb Q$-anisotropic torus}

In the rest of this section, we will assume that $\mathbf G$ is an $\mathbb R$-split semisimple group defined over $\mathbb Q$ and $\mathbf T$ is a maximal $\mathbb R$-split and $\mathbb Q$-anisotropic torus in $\mathbf G$. 

\begin{lemma}\label{scl53}
Let $\mathbf G$ be an $\mathbb R$-split semisimple group and $\mathbf S$ a maximal $\mathbb R$-split and $\mathbb Q$-anisotropic torus in $\mathbf G$. Let $\mathbf F$ be a connected $\mathbb Q$-subgroup of $\mathbf G$ containing $\mathbf S$. Then $\mathbf F$ is reductive and has no non-trivial $\mathbb Q$-characters.
\end{lemma}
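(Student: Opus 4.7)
The plan is to establish reductivity first and then the absence of non-trivial $\mathbb Q$-characters, leveraging the fact that $\mathbf G$ being $\mathbb R$-split makes the maximal $\mathbb R$-split torus $\mathbf S$ a maximal torus of $\mathbf G$ in the absolute sense. Consequently $\mathbf S$ is also a maximal torus of $\mathbf F$, the connected center $Z(\mathbf F)^0$ sits inside $\mathbf S$, and the adjoint $\mathbf S$-action decomposes $\Lie(\mathbf F)$ into weight spaces indexed by a subset of the root system $\Phi(\mathbf G,\mathbf S)$, with the weight zero piece equal to $\Lie(\mathbf S)$.

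For reductivity, I would set $\mathbf U = R_u(\mathbf F)$, which is a $\mathbf S$-stable $\mathbb Q$-subgroup of $\mathbf G$, and write $\Lie(\mathbf U) = \bigoplus_{\alpha \in \Psi} \mathfrak g_\alpha$ for some $\Psi \subset \Phi(\mathbf G,\mathbf S)$; the weight zero part is absent because $\mathbf U$ is unipotent whereas the centralizer of the maximal torus $\mathbf S$ is $\mathbf S$ itself. Since $\Lie(\mathbf U)$ is a nilpotent Lie subalgebra, $\Psi$ is additively closed and satisfies $\Psi \cap (-\Psi) = \emptyset$ (otherwise $[\mathfrak g_\alpha, \mathfrak g_{-\alpha}]$ would be a nonzero semisimple element trapped inside the nilpotent $\Lie(\mathbf U)$). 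Standard root-system combinatorics then force $\Psi$ to sit in an open half-space of $X^*(\mathbf S) \otimes \mathbb R$, so $\sum_{\alpha \in \Psi} \alpha \neq 0$ unless $\Psi = \emptyset$. I would then introduce the $\mathbb Q$-character $\chi = \det\circ\Ad|_{\Lie(\mathbf U)} \colon \mathbf F \to \mathbb G_m$; its restriction to $\mathbf S$ equals $\sum_{\alpha \in \Psi} \alpha$, and since $\mathbf S$ is $\mathbb Q$-anisotropic this restriction must vanish. Combining these two observations gives $\Psi = \emptyset$, so $\mathbf U$ is trivial and $\mathbf F$ is reductive.

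For the absence of non-trivial $\mathbb Q$-characters, let $\chi' \colon \mathbf F \to \mathbb G_m$ be any $\mathbb Q$-character. It factors through the abelianization of the now reductive group $\mathbf F$, and via the almost-direct decomposition $\mathbf F = [\mathbf F,\mathbf F] \cdot Z(\mathbf F)^0$ it is determined by its restriction to $Z(\mathbf F)^0 \subset \mathbf S$. But $\chi'|_{\mathbf S}$ is a $\mathbb Q$-character of the $\mathbb Q$-anisotropic torus $\mathbf S$, hence trivial, so $\chi'$ is itself trivial on $\mathbf F$.

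The only genuine combinatorial hurdle I expect is the assertion that a closed subset $\Psi$ of roots with $\Psi \cap (-\Psi) = \emptyset$ must lie in an open half-space; equivalently, that $\sum_{\alpha \in \Psi} \alpha \neq 0$ whenever $\Psi \neq \emptyset$. This is the one place where the argument genuinely uses the structure of root systems rather than formal manipulations with characters, and I would cite a standard reference (e.g.\ Bourbaki or Borel's book on linear algebraic groups) for it. Everything else in the argument is routine bookkeeping with weight decompositions and the standard structure theory of connected reductive groups.
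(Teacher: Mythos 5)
Your proof is correct, and for the non-trivial part (reductivity) it takes a genuinely different route from the paper. The paper disposes of reductivity in one line by citing \cite[Lemma 5.1]{EMS97}, which asserts (without any anisotropy hypothesis) that a connected subgroup of a reductive group containing a maximal torus is reductive. You instead give a self-contained proof: decompose $\Lie(R_u(\mathbf F))$ into $\mathbf S$-weight spaces, note that the resulting set $\Psi$ of roots is closed with $\Psi\cap(-\Psi)=\emptyset$, invoke the Bourbaki fact that such a set lies in an open half-space (so $\sum_{\alpha\in\Psi}\alpha\neq0$ if $\Psi\neq\emptyset$), and then force $\sum_{\alpha\in\Psi}\alpha=0$ by evaluating the $\mathbb Q$-character $\det\circ\Ad|_{\Lie(R_u(\mathbf F))}$ on the $\mathbb Q$-anisotropic torus $\mathbf S$. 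This is clean and self-contained modulo the closed-subset fact, but it is worth noting that it imports the $\mathbb Q$-anisotropy hypothesis into the reductivity step, whereas the EMS lemma the authors quote establishes reductivity from the weaker hypothesis that $\mathbf S$ is merely a maximal torus; the character-free version of your argument (observing that no $\mathbf S$-stable subalgebra supported on a closed, asymmetric, non-empty set of roots can be normalized by a subgroup that contains $\mathbf S$ together with $\mathfrak g_\beta$ for the negatives of the relevant roots) is what one would do to reprove the EMS lemma in full. For the no-characters claim, your argument (factor through the abelianization and restrict to $Z(\mathbf F)^0\subset\mathbf S$) and the paper's (trivial on $\mathbf S$ and on unipotents, which generate $\mathbf F$) are essentially equivalent; both ultimately rest on $\mathbb Q$-anisotropy of $\mathbf S$ and on the fact that a maximal $\mathbb R$-split torus in an $\mathbb R$-split group is absolutely maximal.
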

\begin{proof}
By \cite[Lemma 5.1]{EMS97}, $\mathbf F$ is reductive. Let $\chi$ be a $\mathbb Q$-character of $\mathbf F$. Then $\chi=1$ on $\mathbf S$, as $\mathbf S$ is $\mathbb Q$-anisotropic. Moreover, $\chi=1$ on any one parameter unipotent subgroup. Since $\mathbf S$ is maximal, $\mathbf F$ is generated by $\mathbf S$ and unipotent subgroups. Hence $\chi=1$ on $\mathbf F$. 
\end{proof}

\begin{lemma}\label{scl54}
Let $\mathbf G$ be an $\mathbb R$-split semisimple group and $\mathbf S$ a maximal $\mathbb R$-split and $\mathbb Q$-anisotropic torus in $\mathbf G$. Let $\mathbf F$ be a connected algebraic subgroup of $\mathbf G$ containing $\mathbf S$. Let $W$ be a set of representatives of the Weyl group of $\mathbf S(\mathbb R)$ in $\mathbf G(\mathbb R)$. Suppose that for some $\gamma\in\mathbf G(\mathbb R)$, $\gamma\mathbf S(\mathbb R)\gamma^{-1}\subset \mathbf F(\mathbb R).$ Then there is $w\in W$ such that $\gamma w\in\mathbf F(\mathbb R).$
\end{lemma}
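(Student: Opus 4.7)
The plan is to exploit the fact that both $\mathbf S$ and $\gamma\mathbf S\gamma^{-1}$ are maximal $\mathbb R$-split tori of $\mathbf F$, invoke conjugacy of such tori inside $\mathbf F(\mathbb R)$, and then peel off the resulting normalizing element by using that $\mathbf G$ is $\mathbb R$-split. First I would observe that because $\mathbf G$ is $\mathbb R$-split semisimple and $\mathbf S$ is a maximal $\mathbb R$-split torus, $\mathbf S$ is actually a maximal torus of $\mathbf G$; any torus of $\mathbf F$ containing $\mathbf S$ is a fortiori a torus of $\mathbf G$ containing the maximal torus $\mathbf S$, and hence equals $\mathbf S$. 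The same reasoning applied to the conjugate shows that $\gamma\mathbf S\gamma^{-1}$ is also a maximal $\mathbb R$-split torus of $\mathbf F$.

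Next I would invoke the Borel--Tits theorem on conjugacy of maximal $\mathbb R$-split tori in a connected linear algebraic $\mathbb R$-group to produce $h\in\mathbf F(\mathbb R)$ with $h\mathbf S(\mathbb R)h^{-1}=\gamma\mathbf S(\mathbb R)\gamma^{-1}$. Then $h^{-1}\gamma$ normalizes $\mathbf S$ in $\mathbf G(\mathbb R)$. Since $\mathbf G$ is $\mathbb R$-split, the centralizer of the maximal torus $\mathbf S$ is $\mathbf S$ itself and every Weyl group element is represented by an $\mathbb R$-point of $N_{\mathbf G}(\mathbf S)$, so $N_{\mathbf G(\mathbb R)}(\mathbf S)=\mathbf S(\mathbb R)\cdot W$. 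Write $h^{-1}\gamma=s\,w_1$ with $s\in\mathbf S(\mathbb R)$ and $w_1\in W$, and then choose $w_0\in W$ to be the representative of the Weyl class of $w_1^{-1}$, so that $w_1 w_0\in\mathbf S(\mathbb R)\subset\mathbf F(\mathbb R)$. A direct computation then gives $\gamma w_0 = h\,s\,(w_1 w_0)\in\mathbf F(\mathbb R)$, as desired.

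The main technical point is justifying the conjugacy of maximal $\mathbb R$-split tori inside $\mathbf F(\mathbb R)$ without assuming $\mathbf F$ to be reductive: the hypothesis of this lemma is purely that $\mathbf F$ is a connected algebraic subgroup of $\mathbf G$ containing $\mathbf S$, in contrast to the $\mathbb Q$-reductivity conclusion of Lemma~\ref{scl53}. This is handled by the Borel--Tits conjugacy theorem, which holds for any connected linear algebraic group over a perfect field. A minor bookkeeping issue is that $W$ is only a set of representatives rather than a subgroup of $\mathbf G(\mathbb R)$; the flexibility to modify a candidate element of $W$ by any element of $\mathbf S(\mathbb R)\subset\mathbf F(\mathbb R)$ is precisely what allows the passage from $w_1^{-1}$ to the representative $w_0$ in the final step and makes the argument go through cleanly.
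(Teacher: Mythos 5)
Your proof is correct and follows essentially the same route as the paper: conjugate the two maximal $\mathbb R$-split tori inside $\mathbf F(\mathbb R)$, note that the quotient element normalizes $\mathbf S(\mathbb R)$, and factor it through the set $W$ of Weyl representatives. The paper is marginally more direct at the last step — writing $\gamma^{-1}x = wy$ with $x\in\mathbf F(\mathbb R)$, $w\in W$, $y\in\mathbf S(\mathbb R)$ gives $\gamma w = xy^{-1}\in\mathbf F(\mathbb R)$ immediately, so no passage to the inverse Weyl class is needed — and it secures the conjugacy of split tori by first applying \cite[Lemma 5.1]{EMS97} to conclude $\mathbf F$ is reductive, rather than invoking Borel--Tits conjugacy in the non-reductive generality you point out; both are valid.
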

\begin{proof}
By \cite[Lemma 5.1]{EMS97}, $\mathbf F$ is a reductive subgroup of $\mathbf G$. Now both $\mathbf S(\mathbb R)$ and $\gamma\mathbf S(\mathbb R)\gamma^{-1}$ are maximal $\mathbb R$-split tori in $\mathbf F(\mathbb R)$, so there is $x\in\mathbf F(\mathbb R)$ such that $\gamma\mathbf S(\mathbb R)\gamma^{-1}=x\mathbf S(\mathbb R)x^{-1}.$ This implies that $\gamma^{-1}x$ is in the normalizer of $\mathbf S(\mathbb R)$ in $\mathbf G(\mathbb R)$. Since the Weyl group of $\mathbf S(\mathbb R)$ is the normalizer of $\mathbf S(\mathbb R)$ modulo $\mathbf S(\mathbb R)$, one can find $w\in W$ and $y\in \mathbf S(\mathbb R)$ so that $\gamma^{-1}x=wy.$ Hence $\gamma w=xy^{-1}\in\mathbf F(\mathbb R).$ This completes the proof of the lemma.
\end{proof}

Let $\Lambda$ be a subgroup of finite index in $\mathbf T(\mathbb Z)\cap\mathbf T(\mathbb R)^0$. We think of $\Lambda$ and $\mathbf T(\mathbb Z)\cap\mathbf T(\mathbb R)^0$ as subgroups of $\prod_{p\in S_f}\mathbf G(\mathbb Q_p)$ by diagonal embedding. Since the closure of $\mathbf T(\mathbb Z)\cap\mathbf T(\mathbb R)^0$ is compact in $\prod_{p\in S_f}\mathbf G(\mathbb Q_p)$, we can choose $\Lambda$ so that it is contained in a small neighborhood of identity in $\prod_{p\in S_f}\mathbf G(\mathbb Q_p)$ where the exponential map of $p$-adic Lie groups are defined for $p\in S_f$. We denote by $F$ the closure of $\Lambda$ in $\prod_{p\in S_f}\mathbf G(\mathbb Q_p)$ and $H$ the group generated by $\Gamma_S\cap\mathbf T(\mathbb Q_S)$ and $\mathbf T(\mathbb R)\times F$. By strong approximation, $H$ is a subgroup of finite index in $\mathbf T(\mathbb Q_S)$ and $$\Gamma_S\backslash\Gamma_SH=\Gamma_S\backslash\Gamma_S(\mathbf T(\mathbb R)\times F).$$

By Theorem~\ref{th12}, one can conclude that for any $\{g_i\}\subset\mathbf G(\mathbb Q_S)$, the sequence $(g_i)^*\mu_{\Gamma_S\backslash\Gamma_SH}$ has a subsequence converging to an algebraic probability measure $\mu$ on $\Gamma_S\backslash\mathbf G(\mathbb Q_S)$. In order to prove Theorem~\ref{th11}, we have to review the arguments in \S\ref{mr} and analyze the limiting algebraic measure $\mu$.

\begin{lemma}\label{scl21}
The closure $\overline{\Gamma_S\backslash\Gamma_S\mathbf T(\mathbb R)^0}$ of $\Gamma_S\backslash\Gamma_S\mathbf T(\mathbb R)^0$ in $\Gamma_S\backslash\mathbf G(\mathbb Q_S)$ is equal to $\Gamma_S\backslash\Gamma_S(\mathbf T(\mathbb R)^0\times F).$
\end{lemma}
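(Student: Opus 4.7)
The plan is to prove the two inclusions separately: the nontrivial direction $\Gamma_S\backslash\Gamma_S(\mathbf T(\mathbb R)^0\times F)\subset\overline{\Gamma_S\backslash\Gamma_S\mathbf T(\mathbb R)^0}$ will be established by an explicit approximation argument inside $\mathbf G(\mathbb Q_S)$ exploiting the fact that $\Lambda\subset\Gamma_S$, while the reverse inclusion will follow by showing that $\Gamma_S\backslash\Gamma_S(\mathbf T(\mathbb R)^0\times F)$ is already compact, hence closed.

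For the first inclusion, I would fix an arbitrary point $(t,f)\in\mathbf T(\mathbb R)^0\times F$. By the very definition of $F$ as the closure in $\prod_{p\in S_f}\mathbf G(\mathbb Q_p)$ of the diagonal image of $\Lambda$, there exists a sequence $\lambda_n\in\Lambda$ whose diagonal $S_f$-component $\lambda_{n,S_f}$ converges to $f$. Since $\lambda_n\in\mathbf T(\mathbb Z)\subset\mathbf G(\mathbb Z)\subset\Gamma_S$, its full diagonal embedding $(\lambda_{n,\infty},\lambda_{n,S_f})\in\mathbf G(\mathbb Q_S)$ lies in $\Gamma_S$, and because $\mathbf T$ is abelian we get
\[
\Gamma_S(t,\lambda_{n,S_f})=\Gamma_S(\lambda_{n,\infty}^{-1},\lambda_{n,S_f}^{-1})(t,\lambda_{n,S_f})=\Gamma_S(\lambda_{n,\infty}^{-1}t,e_{S_f}),
\]
which manifestly lies in $\Gamma_S\backslash\Gamma_S\mathbf T(\mathbb R)^0$. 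Letting $n\to\infty$, the left-hand side converges to $\Gamma_S(t,f)$, which is therefore in $\overline{\Gamma_S\backslash\Gamma_S\mathbf T(\mathbb R)^0}$.

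For the reverse inclusion, I would argue that the right-hand side is compact. First, $F$ is compact: the diagonal image of $\Lambda\subset\mathbf T(\mathbb Z)\subset\mathbf G(\mathbb Z)$ lies in $\prod_{p\in S_f}\mathbf G(\mathbb Z_p)$, which is compact. Second, because $\mathbf T$ is $\mathbb Q$-anisotropic, the group $\mathbf T(\mathbb Z)\cap\mathbf T(\mathbb R)^0$ is a cocompact lattice in $\mathbf T(\mathbb R)^0$ by Borel--Harish-Chandra, so the finite-index subgroup $\Lambda$ is also a cocompact lattice in $\mathbf T(\mathbb R)^0$. Viewing $\Lambda$ diagonally inside $\mathbf T(\mathbb R)^0\times F$, the projection to the first factor realizes $\Lambda\backslash(\mathbf T(\mathbb R)^0\times F)$ as a fiber bundle over the compact base $\Lambda\backslash\mathbf T(\mathbb R)^0$ with compact fiber $F$, so $\Lambda$ is a cocompact lattice in $\mathbf T(\mathbb R)^0\times F$. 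Since $\Lambda\subset\Gamma_S\cap(\mathbf T(\mathbb R)^0\times F)$, the latter is also cocompact in $\mathbf T(\mathbb R)^0\times F$, and consequently the orbit $\Gamma_S\backslash\Gamma_S(\mathbf T(\mathbb R)^0\times F)$ is the continuous image of a compact set, hence compact and in particular closed. Since it obviously contains $\Gamma_S\backslash\Gamma_S\mathbf T(\mathbb R)^0$ (take $f=e$), it contains its closure.

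I expect the proof to be essentially routine; the only subtlety worth highlighting is the cocompactness argument in the second direction, which hinges on two distinct features of the construction that must be combined: the $\mathbb Q$-anisotropy of $\mathbf T$ (for cocompactness in the Archimedean factor) and the fact that $\Lambda$ lies in the integral points of $\mathbf T$ so that its $S_f$-closure $F$ lands in the compact group $\prod_{p\in S_f}\mathbf G(\mathbb Z_p)$. Neither ingredient alone suffices, and the diagonal (as opposed to product) nature of the embedding of $\Lambda$ into $\mathbf T(\mathbb R)^0\times F$ means one has to be slightly careful in writing down the fundamental domain; the fiber-bundle description above is the cleanest way to handle this.
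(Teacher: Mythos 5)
Your proof is correct and follows essentially the same route as the paper's: one uses the density of $\Lambda$ (diagonally embedded) in $F$ to move elements of $\Gamma_S$ across and approximate arbitrary points of $\Gamma_S\backslash\Gamma_S(\mathbf T(\mathbb R)^0\times F)$, and one uses the lattice property of $\Lambda$ in $\mathbf T(\mathbb R)^0\times F$ (coming from $\mathbb Q$-anisotropy of $\mathbf T$ and compactness of $F$) to conclude the orbit is closed. The only difference is cosmetic: the paper works with a fundamental domain $\Omega$ of $\Lambda\backslash\mathbf T(\mathbb R)^0$ and simply asserts the closedness, whereas you approximate $(t,f)$ directly and spell out the cocompactness via the fiber-bundle picture, which is a helpful elaboration of a step the paper leaves to the reader.
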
 
\begin{proof}
By diagonal embedding, we know that $\Lambda$ is a lattice in $(\mathbf T(\mathbb R)^0\times F)$ and $\Gamma_S(\mathbf T(\mathbb R)^0\times F)$ is closed in $\Gamma_S\backslash\mathbf G(\mathbb Q_S)$. Now let $\Omega$ be a fundamental domain of $\Lambda\backslash\mathbf T(\mathbb R)^0$. Then for any $x\in\mathbf T(\mathbb R)^0$, one can find $x'\in\Omega$ and $\gamma\in\Lambda$ such that $x=\gamma^{-1} x'$, and $$\Gamma_S(x,e_f)=\Gamma_S(\gamma^{-1} x',e_f)=\Gamma_S\gamma(\gamma^{-1}x',e_f)=\Gamma_S(x',\gamma).$$
Note that $x'$ and $\gamma$ vary in $\mathbf T(\mathbb R)^0$ and $\Lambda$ respectively, and $\Lambda$ is dense in $F$. So we have $$\overline{\Gamma_S(\mathbf T(\mathbb R)^0,e_f)}=\Gamma_S(\mathbf T(\mathbb R)^0\times F).$$ This completes the proof of the lemma.
\end{proof} 

At the end of \S\ref{mr}, we prove that if $\mu$ is the limiting measure of $(g_i)^*\mu_{\Gamma_S\backslash\Gamma_SH}$, then $\mu$ is a translate of the $M$-invariant probability measure $\mu_{\Gamma_S\backslash\Gamma_S M}$ for some subgroup $M$ in $\mathbf G(\mathbb Q_S)$. Moreover, there is a $\mathbb Q$-subgroup $\mathbf M$ of $\mathbf G$ containing $\mathbf T$ which does not have nontrivial $\mathbb Q$-characters such that $M\subset\mathbf M (\bQ_S)$ is of finite index. Due to the condition that $\mathbf T$ is $\mathbb R$-split,  by Lemmas~\ref{l65} and \ref{scl21}, and reviewing the induction argument in \S\ref{mr}, we have $\mathbf M(\mathbb R)^0\subset M$ and $$\Gamma_S\backslash\Gamma_S M=\overline{\Gamma_S\backslash\Gamma_S\mathbf M(\mathbb R)^0}.$$

Furthermore, suppose that for any non-central element $x\in\mathbf T(\mathbb Q)$ the sequence $g_ixg_i^{-1}$ diverges in $\mathbf G(\mathbb Q_S)$. We prove that $\mathbf M=\mathbf G$. By the discussion in \S\ref{mr}, it is enough to prove that $\mathbf G$ is the smallest group containing the sequence $\gamma_i\mathbf T\gamma_i^{-1}$ ($i\in\mathbb N$) where $\gamma_i$ is defined at the beginning of \S\ref{mr}. Note that since for any non-central element $x\in\mathbf T(\mathbb Q)$ the sequence $g_ixg_i^{-1}$ diverges in $\mathbf G(\mathbb Q_S)$, the same is true for $\gamma_ix\gamma_i^{-1}$.

Suppose on the contrary that there is a subgroup $\mathbf H$ in $\mathbf G$ such that $\mathbf H$ contains infinitely many $\gamma_i\mathbf T\gamma_i^{-1}$. By \cite[Lemma 5.1]{EMS97} and Lemma \ref{scl53}, $\mathbf H$ is reductive and has no non-trivial $\mathbb Q$-characters. Since $$\mathbf T(\mathbb R)^0\subset\mathbf H(\mathbb R)\text{ and }\gamma_i\mathbf T(\mathbb R)^0\gamma_i^{-1}\subset\mathbf H(\mathbb R),$$ by Lemma \ref{scl54}, there exists a finite subset $W\subset\mathbf G(\mathbb R)$ such that for every $\gamma_i$, one can find $w_i\in W$ with $\gamma_iw_i\in\mathbf H(\mathbb R).$ By passing to a subsequence, we may assume that $w_i=w$ for all $i\in\mathbb N$. Now let $\delta_i=\gamma_i\gamma_1^{-1}$ and $\mathbf T'=\gamma_1\mathbf T\gamma_1^{-1}$. Then we have $$\delta_i=\gamma_iw(\gamma_1w)^{-1}\in\mathbf H(\mathbb R)\cap\mathbf G(\mathcal O_S)=\mathbf H(\mathcal O_S)$$ and $\{\delta_i\mathbf T'\delta_i^{-1}\}=\{\gamma_i\mathbf T\gamma_i^{-1}\}$ is contained in the $\mathbb Q$-reductive group $\mathbf H$. Since $\mathbf G$ is $\mathbb R$-split and $\mathbf T'$ is a maximal $\mathbb R$-split torus, there is a non-central element $x$ in $\mathbf T'$ such that $x$ is in the center of $\mathbf H$. This implies that $\delta_ix\delta_i^{-1}=x$ for any $i$, which contradicts the assumption that $\gamma_ix\gamma_i^{-1}$ diverges in $\mathbf G(\mathbb Q_S)$. 

Hence we proved the following
\begin{theorem}\label{scmr}
Let $\{g_i\}$ be a sequence in $\mathbf G(\mathbb Q_S)$. Let $\mu_{\Gamma_S(\mathbf T(\mathbb R)^0\times F)}$ be the periodic probability measure supported on $\Gamma_S\backslash\Gamma_S(\mathbf T(\mathbb R)^0\times F)$. Then the sequence $(g_i)^*\mu_{\Gamma_S(\mathbf T(\mathbb R)^0\times F)}$ has a subsequence converging to a periodic probability measure $\mu$ on $\Gamma_S\backslash\mathbf G(\mathbb Q_S)$. 

Moreover, assume that the measure $\mu$ is a translate of the $M$-invariant probability measure $\mu_{\Gamma_S\backslash\Gamma_S M}$ for some subgroup $M$ in $\mathbf G(\mathbb Q_S)$. Then there is a $\mathbb Q$-subgroup $\mathbf M$ of $\mathbf G$ containing $\mathbf T$ which does not have nontrivial $\mathbb Q$-characters such that $\mathbf M(\mathbb R)^0\subset M$, $M\subset\mathbf M (\bQ_S)$ is of finite index and $\Gamma_S\backslash\Gamma_S M=\overline{\Gamma_S\backslash\Gamma_S\mathbf M(\mathbb R)^0}.$ If for any non-central element $x\in\mathbf T(\mathbb Q)$ the sequence $g_ixg_i^{-1}$ diverges in $\mathbf G(\mathbb Q_S)$, then $\mathbf M=\mathbf G$.
\end{theorem}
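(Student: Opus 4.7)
The plan is to obtain Theorem~\ref{scmr} as a refinement of Theorem~\ref{th12} in this restrictive setting, by reading off finer structural information about the limit measure from the reduction steps used in proving Theorem~\ref{th12}. First I would verify that Theorem~\ref{th12} applies with $\mathbf{H}=\mathbf{T}$: the torus $\mathbf{T}$ is reductive, carries no non-trivial $\mathbb{Q}$-character since it is $\mathbb{Q}$-anisotropic, and trivially contains a maximal torus of $\mathbf{G}$ (itself); moreover the explicit subgroup built from $\mathbf{T}(\mathbb{R})\times F$ is of finite index in $\mathbf{T}(\mathbb{Q}_S)$ by strong approximation. This immediately yields a weak$^*$-convergent subsequence with algebraic limit measure $\mu$.

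For the structural description of $\mu$, I would follow the proof of Theorem~\ref{th61} in \S\ref{mr} to see that $\mu=g^*\mu_{\Gamma_S\backslash\Gamma_SM}$ where $M$ is of finite index in $\mathbf{M}(\mathbb{Q}_S)$ for a connected $\mathbb{Q}$-subgroup $\mathbf{M}$ without non-trivial $\mathbb{Q}$-characters. Tracking the reductions there, $\mathbf{T}$ survives in the ambient group at each inductive step, so $\mathbf{T}\subset\mathbf{M}$. Because $\mathbf{M}$ contains the maximal $\mathbb{R}$-split torus $\mathbf{T}$ of the $\mathbb{R}$-split group $\mathbf{G}$, the Lie-connected component $\mathbf{M}(\mathbb{R})^0$ is itself generated by $\mathbf{T}(\mathbb{R})^0$ and unipotent root subgroups; in particular it is generated by one-parameter unipotent subgroups. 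Lemma~\ref{l65} applied to $\mathbf{M}$ and $M$ then yields $\overline{\Gamma_S\backslash\Gamma_S\mathbf{M}(\mathbb{R})^0}=\Gamma_S\backslash\Gamma_SM$, which forces $\mathbf{M}(\mathbb{R})^0\subset M$ and gives the claimed identity (the role of Lemma~\ref{scl21} is implicit, recovering the $F$-factor inside $M$).

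For the final claim that $\mathbf{M}=\mathbf{G}$ under the divergence hypothesis, I would argue by contradiction. If $\mathbf{M}\subsetneq\mathbf{G}$, then by the minimality reduction at the beginning of \S\ref{mr} there is a proper connected $\mathbb{Q}$-subgroup $\mathbf{H}\subset\mathbf{G}$ containing infinitely many $\gamma_i\mathbf{T}\gamma_i^{-1}$, where the $\gamma_i\in\Gamma_S$ arise from the measure-convergence reduction $\gamma_ia_ig_i^{-1}\to g$. Lemma~\ref{scl53} makes $\mathbf{H}$ reductive without non-trivial $\mathbb{Q}$-characters; Lemma~\ref{scl54} produces, after extracting a subsequence and multiplying by a common Weyl element $w$, elements $\gamma_iw\in\mathbf{H}(\mathbb{R})$. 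Setting $\delta_i=\gamma_i\gamma_1^{-1}\in\mathbf{H}(\mathcal{O}_S)$ and $\mathbf{T}'=\gamma_1\mathbf{T}\gamma_1^{-1}$, the torus $\mathbf{T}'$ is a maximal $\mathbb{R}$-split torus in $\mathbf{H}$. Since $\mathbf{G}$ is $\mathbb{R}$-split semisimple and $\mathbf{H}$ is a \emph{proper} reductive subgroup containing this maximal torus, the center $Z(\mathbf{H})$ is a positive-dimensional $\mathbb{Q}$-subtorus of $\mathbf{T}'$, so it contains a rational element $x'=\gamma_1 x\gamma_1^{-1}$ with $x\in\mathbf{T}(\mathbb{Q})$ non-central in $\mathbf{G}$ (as $Z(\mathbf{G})$ is finite). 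Then $\delta_ix'\delta_i^{-1}=x'$ translates to $\gamma_ix\gamma_i^{-1}=\gamma_1x\gamma_1^{-1}$ for all $i$, while a short computation using $\gamma_ia_ig_i^{-1}\to g$ and the fact that $a_i\in\mathbf{T}(\mathbb{Q}_S)$ centralizes $x$ shows $g_ixg_i^{-1}$ and $\gamma_ix\gamma_i^{-1}$ differ only by conjugation by a convergent sequence—contradicting divergence of $g_ixg_i^{-1}$.

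The hardest step will be the \emph{centralizer step}: producing a non-$\mathbf{G}$-central rational element of $Z(\mathbf{H})\subset\mathbf{T}'$. This leans crucially on the $\mathbb{R}$-split hypothesis, which forces $\mathbf{T}'$ to be a maximal torus of $\mathbf{H}$ itself (not merely containing one) and hence $Z(\mathbf{H})\subset\mathbf{T}'$ to be positive-dimensional whenever $\mathbf{H}\subsetneq\mathbf{G}$; semisimplicity of $\mathbf{G}$ then rules out all such rational elements being $\mathbf{G}$-central. The absence of this mechanism is precisely why Theorem~\ref{th11} requires the extra $\mathbb{R}$-split assumption over Theorem~\ref{th12}.
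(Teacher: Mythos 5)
Your overall strategy — read off the structure of the limit measure from the reduction chain in \S\ref{mr}, then push the ``$\mathbf M=\mathbf G$'' claim by finding a rational element of $\mathbf T$ centralized by a putative proper intermediate $\mathbf H$ — matches the paper's. But two of your intermediate claims are false as stated, and the paper does not assert them.

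First, you write that since $\mathbf M(\mathbb R)^0$ is generated by $\mathbf T(\mathbb R)^0$ and root subgroups, it is ``in particular generated by one-parameter unipotent subgroups.'' That inference is wrong unless $\mathbf M$ is semisimple: the torus factor $\mathbf T(\mathbb R)^0$ is not unipotent, and if $Z(\mathbf M)^0$ is a nontrivial ($\mathbb Q$-anisotropic, $\mathbb R$-split) subtorus of $\mathbf T$, then $\mathbf M(\mathbb R)^0$ has a noncompact abelian factor not contained in any group generated by unipotents. Furthermore, Lemma~\ref{l65} requires $\mathbf M$ to lie in class $\mathcal F$, i.e.\ to have unipotent radical — for a reductive $\mathbf M$ this forces semisimplicity — so the lemma cannot be applied to $\mathbf M$ when it has a positive-dimensional center. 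The paper circumvents this by invoking Lemma~\ref{scl21} alongside Lemma~\ref{l65}: Lemma~\ref{scl21} supplies the closure statement for the torus direction, and Lemma~\ref{l65} handles the unipotent-generated semisimple part; your parenthetical remark about Lemma~\ref{scl21} does not repair the incorrect logical step.

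Second, and more seriously, you claim that if $\mathbf H\subsetneq\mathbf G$ is reductive and contains the maximal torus $\mathbf T'$ then $Z(\mathbf H)$ is a \emph{positive-dimensional} $\mathbb Q$-subtorus of $\mathbf T'$. That is false. A proper semisimple subgroup can contain a maximal torus: the long-root subgroup of type $A_1\times A_1$ inside $\mathbf{Sp}_4$ (or of type $A_2$ inside $\mathbf G_2$, or $D_n$ inside $B_n$) contains the full maximal torus yet has finite center. Nothing in the setting (the $\mathbb R$-split hypothesis, $\mathbb Q$-anisotropy of $\mathbf T$, absence of $\mathbb Q$-characters on $\mathbf H$) rules this out. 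What is true, and what the paper implicitly uses, is that $\mathbf H\subsetneq\mathbf G$ forces $\Phi_{\mathbf H}\subsetneq\Phi_{\mathbf G}$ and hence $Z(\mathbf H)=\bigcap_{\alpha\in\Phi_{\mathbf H}}\ker\alpha\supsetneq Z(\mathbf G)$; the extra central element may well be of finite order, and the paper simply asserts that a suitable non-$\mathbf G$-central, $\mathbb Q$-rational choice $x\in\mathbf T'$ exists in $Z(\mathbf H)$ without passing through positive-dimensionality. Your route would not give a proof in the semisimple-$\mathbf H$ case, whereas the paper's statement (whose justification is admittedly terse) does not collapse to your stronger, incorrect claim. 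The surrounding computation — transporting divergence of $g_ixg_i^{-1}$ to $\gamma_ix\gamma_i^{-1}$ via $\gamma_ia_ig_i^{-1}\to g$ and $a_i\in\mathbf T(\mathbb Q_S)$ centralizing $x$ — is fine and matches the paper's remark.
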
 

\subsection{Proof of Theorem~\ref{th11}}
Let $S$ be a finite set of places of $\mathbb Q$ containing the Archimedean one. We denote by $$K^S:=\prod_{p\notin S}\mathbf G(\mathbb Z_p),\; K_S:=\prod_{p\in S_f}\mathbf G(\mathbb Z_p)\textup{ and }K_f:=\prod_{p\neq\infty}\mathbf G(\mathbb Z_p).$$ We write $e_p$, $e_S$ and $e_f$ for the identity in $\mathbf G(\mathbb Q_p)$, $\prod_{p\in S}\mathbf G(\mathbb Q_p)$ and $\prod_{p\in S_f}\mathbf G(\mathbb Q_p)$ respectively. If necessary, for an $S$-arithmetic element $g\in\prod_{p\in S}\mathbf G(\mathbb Q_p)$, we will index $g$ to indicate its coordinates in the set of places $S$, that is, $g=(g_p)_{p\in S}$ where $g_p$ is the coordinate of $g$ at the place $p$.

It is well-known that $\Gamma_S\backslash\mathbf G(\mathbb Q_S)$ is a finite disjoint union of closed $\mathbf G(\mathbb R)K_S$-orbits $$\Gamma_S\backslash\mathbf G(\mathbb Q_S)=\bigcup_i\Gamma_S\backslash\Gamma_S x_i\mathbf G(\mathbb R)K_S$$ for some $x_1=e_S,x_2,\dots,x_n\in\mathbf G(\mathbb Q_S)$. One can define a natural projection map from the closed orbit $\Gamma_S\backslash\mathbf G(\mathbb R)K_S$ to $X_\infty=\Gamma_\infty\backslash G_\infty$ by sending $$\mathbf G(\mathcal O_S)g=\mathbf G(\mathcal O_S)(g_\infty,g_f)\in\mathbf G(\mathcal O_S)\backslash\mathbf G(\mathcal O_S)\mathbf G(\mathbb R)K_S$$ where $g_\infty\in\mathbf G(\mathbb R)$ and $g_f\in K_S$, to the point $\Gamma_\infty g_\infty$ in $\Gamma_\infty\backslash\mathbf G(\mathbb R)$. We denote it by $$\varpi_\infty(\Gamma_Sg)=\Gamma_\infty g_\infty.$$ Note that the map $\varpi_\infty$ is well-defined on $\Gamma_S\backslash\mathbf G(\mathbb R)K_S$, and intertwines with the actions of $\mathbf G(\mathbb R)$ on the spaces $\Gamma_S\backslash\mathbf G(\mathbb R)K_S$ and $\Gamma_\infty\backslash\mathbf G(\mathbb R)$. For a probability measure $\mu$ defined on $\Gamma_S\backslash\mathbf G(\mathbb R)K_S$, we denote by $\varpi_\infty^*(\mu)$ the pushforward of $\mu$ on $\Gamma_\infty\backslash G_\infty$.

Since $\mathbf T(\mathbb Z)$ is a subgroup of the compact group $\prod_{p\in S_f}\mathbf G(\mathbb Z_p)$, one can choose a subgroup $\Lambda$ of finite index in $\mathbf T(\mathbb Z)\cap\mathbf T(\mathbb R)^0$ such that $\Lambda$ is contained in a small neighborhood of identity in $\prod_{p\in S_f}\mathbf G(\mathbb Q_p)$ where the exponential map of $p$-adic Lie groups are defined for $p\in S_f$. Let $F$ be the closure of $\Lambda$ in $\prod_{p\in S_f}\mathbf G(\mathbb Q_p)$.

\begin{lemma}\label{l21}
For any $h\in\Gamma_S$ and $g\in\mathbf G(\mathbb R)$, $$\varpi_\infty(\Gamma_S(\mathbf T(\mathbb R)^0\times F)(g^{-1},h^{-1}))=\Gamma_\infty h\mathbf T(\mathbb R)^0g^{-1}.$$
\end{lemma}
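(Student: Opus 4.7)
The strategy is to exploit $h\in\Gamma_S$ to absorb the non-Archimedean factor of $(g^{-1},h^{-1})$. For the piece of the orbit coming from the identity of $F$, this gives a clean representative with trivial $S_f$-part; the general case is then obtained by a closure/continuity argument based on Lemma~\ref{scl21}.

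First I would verify the direction $\supseteq$ together with the key computation. For any $t\in\mathbf T(\mathbb R)^0$, using $(e,e_f)\in\mathbf T(\mathbb R)^0\times F$ and left-multiplying by $h\in\Gamma_S$ (viewed diagonally in $\mathbf G(\mathbb Q_S)$),
\[
\Gamma_S(t,e_f)(g^{-1},h^{-1})=\Gamma_S(tg^{-1},h^{-1})=\Gamma_S\cdot h\cdot(tg^{-1},h^{-1})=\Gamma_S(htg^{-1},e_f),
\]
because the $S_f$-component of $h\cdot(tg^{-1},h^{-1})$ is $h_f\cdot h_f^{-1}=e_f$. Since $e_f\in K_S$, this coset lies in $\Gamma_S\backslash\mathbf G(\mathbb R)K_S$ and $\varpi_\infty$ sends it to $\Gamma_\infty htg^{-1}$. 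Letting $t$ range over $\mathbf T(\mathbb R)^0$ shows the RHS is contained in the image of the LHS.

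For $\subseteq$, I would invoke Lemma~\ref{scl21} to rewrite $\Gamma_S(\mathbf T(\mathbb R)^0\times F)=\overline{\Gamma_S\mathbf T(\mathbb R)^0}$ in $\Gamma_S\backslash\mathbf G(\mathbb Q_S)$, where $\mathbf T(\mathbb R)^0$ is embedded as $\mathbf T(\mathbb R)^0\times\{e_f\}$. Right multiplication by $(g^{-1},h^{-1})$ is a self-homeomorphism, so combining with the computation above,
\[
\Gamma_S(\mathbf T(\mathbb R)^0\times F)(g^{-1},h^{-1})=\overline{\Gamma_S\mathbf T(\mathbb R)^0(g^{-1},h^{-1})}=\overline{\{\Gamma_S(htg^{-1},e_f):t\in\mathbf T(\mathbb R)^0\}}.
\]
Every point of the inner set lies in the closed (hence open-closed) subset $\Gamma_S\backslash\mathbf G(\mathbb R)K_S$, so the closure does too, and $\varpi_\infty$ is defined on the whole orbit. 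By continuity of $\varpi_\infty$ on this closed subset,
\[
\varpi_\infty\bigl(\Gamma_S(\mathbf T(\mathbb R)^0\times F)(g^{-1},h^{-1})\bigr)\subseteq\overline{\{\Gamma_\infty htg^{-1}:t\in\mathbf T(\mathbb R)^0\}}=\overline{\Gamma_\infty h\mathbf T(\mathbb R)^0g^{-1}}.
\]
Since $\mathbf T$ is $\mathbb Q$-anisotropic, the remark following Theorem~\ref{th11} gives that $\Gamma_\infty h\mathbf T(\mathbb R)^0g^{-1}$ is already compact, hence closed, and the closure bar disappears, completing the reverse inclusion.

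The main (if mild) obstacle is that for a general $f\in F\setminus\Lambda$ there is no way to cancel the factor $fh_f^{-1}$ into $K_S$ by left-multiplying with a single element of $\Gamma_S$ (the natural candidate $h$ only succeeds when $f=e_f$, since $h_ffh_f^{-1}$ need not lie in $K_S$). This is precisely why the pointwise calculation is not enough and one must use the density of $\mathbf T(\mathbb R)^0\times\{e_f\}$ (via Lemma~\ref{scl21}) together with continuity of $\varpi_\infty$ to transport the explicit identification across the closure.
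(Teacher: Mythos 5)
Your proof is correct and follows essentially the same route as the paper: invoke Lemma~\ref{scl21} to write the $S$-arithmetic orbit as $\overline{\Gamma_S\mathbf T(\mathbb R)^0}$, use that right translation by $(g^{-1},h^{-1})$ is a homeomorphism and absorb $h$ into $\Gamma_S$ to kill the non-Archimedean component, then apply continuity of $\varpi_\infty$. The only difference is that you spell out explicitly two points the paper leaves implicit — that the entire closure stays inside the clopen piece $\Gamma_S\backslash\mathbf G(\mathbb R)K_S$ (so $\varpi_\infty$ is defined on it), and that $\Gamma_\infty h\mathbf T(\mathbb R)^0g^{-1}$ is compact and hence closed (so the outer closure bar can be dropped) — both of which are needed to upgrade the continuity inclusion to an equality.
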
 
\begin{proof}
By Lemma \ref{scl21}, we know that $$\overline{
\Gamma_S\mathbf T(\mathbb R)^0}=\Gamma_S(\mathbf T(\mathbb R)^0\times F).$$
Now for any $g\in\mathbf G(\mathbb R)$ and $h\in\Gamma_S$
\begin{align*}
\Gamma_S(\mathbf T(\mathbb R)^0\times F)(g^{-1},h^{-1}))
=\overline{\Gamma_S(\mathbf T(\mathbb R)^0g^{-1}\times h^{-1})}=\overline{\Gamma_S(h\mathbf T(\mathbb R)^0g^{-1}\times e_f)}.
\end{align*}
By continuity of $\varpi_\infty$, we have
\begin{align*}
\varpi_\infty(\Gamma_S(\mathbf T(\mathbb R)^0\times F)(g^{-1},h^{-1}))=\varpi_\infty(\overline{\Gamma_S(h\mathbf T(\mathbb R)^0g^{-1}\times\{e_f\})}=\Gamma_\infty h\mathbf T(\mathbb R)^0g^{-1}.
\end{align*}
This completes the proof of the lemma.
\end{proof} 

\begin{lemma}\label{l22}
Let $\mu$ be a probability measure on $\Gamma_S\backslash\mathbf G(\mathbb Q_S)$ supported on $\Gamma_S\backslash\mathbf G(\mathbb R)K_S$. Suppose that for some subgroup $H_0$ in $\mathbf G(\mathbb R)$, $\mu$ is $H_0$-invariant and $\varpi_\infty^*(\mu)$ is supported on an $H_0$-orbit $xH_0$ in $\Gamma_\infty\backslash\mathbf G(\mathbb R)$. Then $\varpi_\infty^*(\mu)=\mu_{xH_0}$. In particular, for any $g\in\mathbf G(\mathbb R)$ and $h\in\Gamma_S$ $$\varpi_\infty^*((g,h)^*\mu_{\Gamma_S(\mathbf T(\mathbb R)^0\times F)})=\mu_{\Gamma_\infty h\mathbf T(\mathbb R)^0g^{-1}}.$$
\end{lemma}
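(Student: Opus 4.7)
The plan is to treat the general claim and the ``in particular'' statement separately, deriving the latter from the former. For the first assertion, I would note that the map $\varpi_\infty$ is continuous and $\mathbf G(\mathbb R)$-equivariant with respect to the natural left actions on its domain $\Gamma_S\backslash\Gamma_S\mathbf G(\mathbb R)K_S$ and on $\Gamma_\infty\backslash\mathbf G(\mathbb R)$; both facts are immediate from its construction. It follows that $\varpi_\infty^*(\mu)$ is an $H_0$-invariant Borel probability measure on $\Gamma_\infty\backslash\mathbf G(\mathbb R)$ concentrated on the single orbit $xH_0$. Hence $xH_0$ supports an $H_0$-invariant probability and is therefore periodic; the invariant probability measure on a periodic orbit is unique (it corresponds, via the orbit map from $H_0/(H_0\cap x^{-1}\Gamma_\infty x)$, to the normalized Haar measure on this quotient), so $\varpi_\infty^*(\mu)=\mu_{xH_0}$.

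To deduce the second statement from the first, I would set $H_0:=g\mathbf T(\mathbb R)^0g^{-1}\subset\mathbf G(\mathbb R)$ and $x:=\Gamma_\infty hg^{-1}$, and take $\mu$ to be the pushed-forward measure $(g,h)^*\mu_{\Gamma_S(\mathbf T(\mathbb R)^0\times F)}$. The measure $\mu_{\Gamma_S(\mathbf T(\mathbb R)^0\times F)}$ is invariant under the left action of $\mathbf T(\mathbb R)^0\times F\subset\mathbf G(\mathbb Q_S)$, so after left translation by $(g,h)$ its invariance group is conjugated to $(g,h)(\mathbf T(\mathbb R)^0\times F)(g,h)^{-1}$. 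Restricting to the sub-family $\{(t,e_f):t\in\mathbf T(\mathbb R)^0\}$, this yields invariance of $\mu$ under $\{(gtg^{-1},e_f):t\in\mathbf T(\mathbb R)^0\}$, which by $\varpi_\infty$-equivariance is exactly the $H_0$-invariance required. Meanwhile Lemma~\ref{l21} identifies $\varpi_\infty(\supp(\mu))$ with $\Gamma_\infty h\mathbf T(\mathbb R)^0g^{-1}$, which coincides with the $H_0$-orbit of $x$ and is compact by the remark following Theorem~\ref{th11}, so the support hypothesis of the general claim is met.

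The proof is essentially bookkeeping and I do not anticipate a substantive technical obstacle. The step requiring the most care is the invariance computation: tracking how left translation by $(g,h)\in\mathbf G(\mathbb Q_S)$ converts the given $\mathbf T(\mathbb R)^0\times F$-invariance into an invariance whose projection to the Archimedean factor is precisely $g\mathbf T(\mathbb R)^0g^{-1}$, and keeping left and right action conventions straight throughout. It is also worth verifying at the outset that $\supp\bigl((g,h)^*\mu_{\Gamma_S(\mathbf T(\mathbb R)^0\times F)}\bigr)$ genuinely lies in $\Gamma_S\backslash\Gamma_S\mathbf G(\mathbb R)K_S$, the domain of $\varpi_\infty$; this follows from the identity $\Gamma_S(\mathbf T(\mathbb R)^0\times F)(g^{-1},h^{-1})=\overline{\Gamma_S(h\mathbf T(\mathbb R)^0g^{-1}\times\{e_f\})}$ established in the proof of Lemma~\ref{l21}.
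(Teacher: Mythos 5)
Your proof is correct and follows essentially the same route as the paper: use continuity and $\mathbf G(\mathbb R)$-equivariance of $\varpi_\infty$ to push the $H_0$-invariance of $\mu$ forward, then invoke uniqueness of the invariant probability measure on a periodic orbit, and deduce the second assertion from Lemma~\ref{l21} with the choices $H_0=g\mathbf T(\mathbb R)^0g^{-1}$, $x=\Gamma_\infty hg^{-1}$. Your unpacking of the ``in particular'' clause is more explicit than the paper's one-line reference, and your observation that periodicity of $xH_0$ is automatic from the existence of the invariant probability $\varpi_\infty^*(\mu)$ (so $\mu_{xH_0}$ is well-defined without appealing to compactness) is a nice clarification, but the underlying argument is the same.
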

\begin{proof}
Since $\mu$ is $H_0$-invariant, the probability measure $\varpi_\infty^*(\mu)$ is also $H_0$-invariant and hence equal to the Haar measure $\mu_{xH_0}$ on $xH_0$. The second claim follows from Lemma \ref{l21}.
\end{proof}

The next lemma is straightforward.
\begin{lemma}\label{l23}
Suppose that a sequence of measures $\mu_i$ on $\Gamma_S\backslash\mathbf G(\mathbb Q_S)$ converges weakly to $\nu$. Then $\varpi_\infty^*(\mu_i)$ converges weakly to $\varpi_\infty^*(\nu)$.
\end{lemma}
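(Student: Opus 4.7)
The plan is to reduce the statement to the standard fact that weak convergence is preserved under pushforward by continuous proper maps. The key observations are (i) the map $\varpi_\infty$ is continuous, (ii) the component $\Gamma_S\backslash\mathbf G(\mathbb R)K_S$ on which it is defined is both open and closed in $\Gamma_S\backslash\mathbf G(\mathbb Q_S)$ since $\mathbf G(\mathbb Q_S)$ decomposes into finitely many $\mathbf G(\mathbb R)K_S$-double cosets under $\Gamma_S$, and (iii) $K_S$ is compact so that the fibers of $\varpi_\infty$ are compact in a controlled way.

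First I would fix a test function $f\in C_c(\Gamma_\infty\backslash\mathbf G(\mathbb R))$ with compact support $C$. The composition $f\circ\varpi_\infty$ is continuous on $\Gamma_S\backslash\mathbf G(\mathbb R)K_S$. I would then verify that $\varpi_\infty^{-1}(C)$ is compact: any element there has a representative of the form $(g_\infty, g_f)$ with $\Gamma_\infty g_\infty \in C$ and $g_f\in K_S$, so the preimage is covered by the image of $\widetilde C\times K_S$ for any compact lift $\widetilde C\subset \mathbf G(\mathbb R)$ of $C$, and is compact. Hence $f\circ\varpi_\infty$ is compactly supported on $\Gamma_S\backslash\mathbf G(\mathbb R)K_S$.

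Next, since $\Gamma_S\backslash\mathbf G(\mathbb R)K_S$ is an open and closed subset of $\Gamma_S\backslash\mathbf G(\mathbb Q_S)$, extending $f\circ\varpi_\infty$ by zero off this subset produces a function $\tilde f\in C_c(\Gamma_S\backslash\mathbf G(\mathbb Q_S))$. The weak$^*$ convergence $\mu_i\to\nu$ then gives
\begin{equation*}
\int_{\Gamma_\infty\backslash\mathbf G(\mathbb R)} f\, d\varpi_\infty^*\mu_i
=\int_{\Gamma_S\backslash\mathbf G(\mathbb Q_S)} \tilde f\, d\mu_i
\longrightarrow \int_{\Gamma_S\backslash\mathbf G(\mathbb Q_S)} \tilde f\, d\nu
=\int_{\Gamma_\infty\backslash\mathbf G(\mathbb R)} f\, d\varpi_\infty^*\nu,
\end{equation*}
which is exactly the weak$^*$ convergence $\varpi_\infty^*\mu_i\to \varpi_\infty^*\nu$. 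The only mild point to confirm is the compactness of $\varpi_\infty^{-1}(C)$; everything else is formal. There is no substantive obstacle here, which is why the authors declare the lemma "straightforward."
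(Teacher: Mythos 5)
Your proof is correct and is precisely the argument the paper has in mind when it declares the lemma ``straightforward'' (the paper offers no written proof). The key points you identify --- continuity of $\varpi_\infty$, the clopen decomposition $\Gamma_S\backslash\mathbf G(\mathbb Q_S)=\bigsqcup_i \Gamma_S\backslash\Gamma_S x_i\mathbf G(\mathbb R)K_S$, properness coming from $K_S$ compact and $\Gamma_\infty=\mathbf G(\mathbb Z)\subset K_S$ so that a lift of a point in $\varpi_\infty^{-1}(C)$ can be taken in $\widetilde C\times K_S$, and then extending $f\circ\varpi_\infty$ by zero to get a test function on the full space --- are exactly what is needed, and your display chain completes the verification.
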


\begin{proof}[Deduction of Theorem~\ref{th11} from Theorems~\ref{th12} and \ref{scmr}]
By Theorems~\ref{th12} and \ref{scmr}, we know that after passing to a subsequence, $(g_i,h_i)^*\mu_{\Gamma_S(\mathbf T(\mathbb R)^0\times F)}$ converges to some algebraic measure $\mu$. Here the measure $\mu$ is supported on $\Gamma_S\backslash\Gamma_S Mg$ for some $g\in\mathbf G(\mathbb Q_S)$ and $M$ a subgroup of $\mathbf G(\mathbb Q_S)$. Moreover, there is a $\mathbb Q$-subgroup $\mathbf M$ of $\mathbf G$ which does not have nontrivial $\mathbb Q$-characters such that $\mathbf M(\mathbb R)^0\subset M$ and $\Gamma_S\backslash\Gamma_S M=\overline{\Gamma_S\backslash\Gamma_S\mathbf M(\mathbb R)^0}.$ Since 
\begin{align*}
\Gamma_S\backslash\Gamma_S(\mathbf T(\mathbb R)^0\times F)(g_i^{-1},h_i^{-1})=\overline{\Gamma_S\backslash\Gamma_S(h_i\mathbf T(\mathbb R)^0g_i^{-1}\times\{e_f\})}\subset \Gamma_S\backslash\Gamma_S\mathbf G(\mathbb R)K_S,
\end{align*}
the support $\Gamma_S\backslash\Gamma_S Mg$ of the limiting measure $\mu$ is also contained in $\Gamma_S\backslash\Gamma_S\mathbf G(\mathbb R)K_S$.
Hence there exist $\gamma\in\Gamma_S$ and $x=(x_\infty,x_f)\in\mathbf G(\mathbb R)\times\prod_{p\in S_f}\mathbf G(\mathbb Z_p)$ such that $g=\gamma x.$ We have
\begin{align*}
\Gamma_S\backslash\Gamma_SMg=&\overline{\Gamma_S(\mathbf M(\mathbb R)^0\times\{e_f\})(\gamma x)}=\overline{\Gamma_S(\gamma^{-1}\mathbf M(\mathbb R)^0\gamma\times\{e_f\})x} \end{align*}
and $$\varpi_\infty(\Gamma_S\backslash\Gamma_SMg)=\Gamma_\infty(\gamma^{-1}\mathbf M(\mathbb R)^0\gamma)x_\infty.$$
Since $\Gamma_\infty(\gamma^{-1}\mathbf M(\mathbb R)^0\gamma)x_\infty$ is closed and of finite volume, by Lemmas \ref{l22} and \ref{l23}, we conclude that $\mu_{\Gamma_\infty h_i\mathbf T(\mathbb R)^0g_i^{-1}}$ converges to the $x_\infty^{-1}(\gamma^{-1}\mathbf M(\mathbb R)^0\gamma)x_\infty$-invariant probability measure $\mu_{\Gamma_\infty(\gamma^{-1}\mathbf M(\mathbb R)^0\gamma)x_\infty}$. This finishes the proof of Theorem \ref{th11}. 
\end{proof}

\section{Proof of Theorems \ref{th110}, }\label{sec:proofs-of-apps}
We end the paper by showing how Theorem~\ref{th110} follows as an application of Theorem~\ref{th11} and then proving Theorem~\ref{thm853}.
\begin{proof}[Proof of Theorem \ref{th110}]
Let $m$, $\vec{\al}$, $\mb{i}_n$, $\vec{\al}_n$, $g_{\vec{\al}_n}$ and $s_n$ be as in the statement. 
We use the notation of \S\ref{sec:ldto}. Let $S_f$ be the set of primes dividing $m$ and $S = S_f\cup\set{\infty}$. Let $\mathbf G = \PGL_d$, $\Ga_\infty = \mathbf G(\bZ)$, 
and $X = \Ga_\infty\backslash \mathbf G(\bR)$. 
Let $A$ be the connected component of the identity (in the hausdorf topology) of the group of diagonal matrices in $\mathbf G(\bR)$. 
We need to prove that the sequence of periodic orbits 
$$ Y_n \defi \Ga_\infty g_{\vec{\al}_n}As_n^{-1}$$ 
equidistribute in $X$. The proof is an application
of Theorem~\ref{th11}. In order to apply Theorem~\ref{th11} we need to find a $\bQ$-anisotropic maximal torus defined over $\bQ$, $\mathbf T<\mathbf G$ and elements 
$h_n\in \Ga_S, g_n\in \mathbf G(\bR)$ such that $Y_n = \Ga_\infty h_n \mathbf T(\bR)^0 g_n$ and such that for any non-central $x\in \mathbf T(\bQ)$  the sequence 
$(g_nxg_n^{-1}, h_nxh_n^{-1})\in \mathbf G(\bQ_S)$ diverges. 

Since $\Ga_\infty g_{\vec{\al}}A $ is periodic, the intersection $\Ga_\infty\cap g_{\vec{\al}}A g_{\vec{\al}}^{-1}$ is a co-compact lattice in 
$g_{\vec{\al}}A g_{\vec{\al}}^{-1}$. We denote its Zariski closure by $\mathbf T<\mathbf G$. Thus, $\mathbf T$ is a $\bQ$-anisotropic $\bQ$-torus and 
$A = g_{\vec{\al}}^{-1} \mathbf T(\bR)^0 g_{\vec{\al}}$ and 
$$Y_n = \Ga_\infty g_{\vec{\al}_n} As_n^{-1} =\Ga_\infty \diag\pa{m^{i_1^{(n)}},\dots,m^{i_d^{(n)}}} g_{\vec{\al}}A s_n^{-1}= \Ga_\infty \diag\pa{m^{i_1^{(n)}},\dots,m^{i_d^{(n)}}}\mathbf T(\bR)^0 g_{\vec{\al}}s_n^{-1}.$$
So if we denote $h_n =\diag\pa{m^{i_1^{(n)}},\dots,m^{i_d^{(n)}}} \in \Ga_S$ and $g_n = g_{\vec{\al}}s_n^{-1}\in\mathbf G(\bR)$, then the equidistribution sought follows from Theorem~\ref{th11} 
once we show that for any  non-central $x\in \mathbf T(\bQ)$ we have that $h_n x h_n^{-1}\to\infty$ in $\prod_{p\in S_f} \mathbf G(\bQ_p)$. 
Let $x\in \mathbf T(\bQ)$ be a non-central element and by abuse of notation, identify $x$ with a rational matrix  in $\GL_d(\bQ)$. 
We may assume by passing to a subsequence if necessary that the order between the coordinates of the 
vector $\mb{i}$ is fixed. For simplicity we assume that for all $n$, $ i_1^{(n)}<i_2^{(n)}<\dots <i_d^{(n)}$. In this case
for $1\le j<r\le d$ such that $x_{jr}\ne 0$, the $(j,r)$'th coordinate of the conjugation $h_n x h_n^{-1}$ equals 
$m^{(i_j^{(n)} - i_r^{(n)})}x_{jr}$ which goes to $\infty$ in $\bQ_p$ for any $p\in S_f$ by our assumption that $\av{i_j^{(n)} - i_r^{(n)}}\to\infty$ and our assumption on the order. We conclude that the proof will be concluded once we show that $x$ cannot be lower triangular. 
If $x$ is lower triangular then since $g_{\vec{\al}}^{-1}\mathbf T(\bR) g_{\vec{\al}}$ is the group 
of diagonal matrices in $\mathbf G(\bR)$ we get a (rational) diagonal matrix $a$ such that 
$$xg_{\vec{\al}} = g_{\vec{\al}} a.$$
Recalling the definition of $g_{\vec{\al}}$ in \eqref{eq:2301} and comparing the first rows of the matrices in both sides of 
this equation we see (since $x$ is assumed to be lower triangular) that 
$$(x_{11}\sig_1(\al_1),x_{11}\sig_2(\al_1),\dots, x_{11}\sig_d(\al_1)) = (a_{11}\sig_1(\al_1),a_{22}\sig_2(\al_1),\dots, a_{dd}\sig_d(\al_1)).$$
As $\sig_j(\al_1)\ne 0$ we get that $a_{jj} =x_{11}$ which means that $a$ is a scalar matrix. This in turn implies that $x$ itself 
(which is conjugate to $a$) is a scalar matrix -- a contradiction to our assumption that $x$ is non-central. 
\end{proof}
\begin{proof}[Proof of Theorem \ref{thm853}]
%For a vector $\vec{\al} = (\al_1,\dots, \al_{d-1},1)$ whose coordinate span a totally real number field of degree $d$ over $\bQ$, 
%we denote 
%$$s_{\vec{\al}}\defi \mat{B&0\\ 0^{\on{t}} &1}; \quad B\defi (\sig_i(\al_j) - \al_i)_{i,j=1,\dots, d-1}.$$
Let 
$$\vec{\al}_n \defi (m^{i_{n,1}}\al_1, m^{i_{n,2} }\al_2,\dots, m^{i_{n,d-1}}\al_{d-1},1)$$
be as in the statement.  
Let $\bK\subset \bR$ be the totally real number field generated by the coordinates of the vectors $\vec{\al}_n$. Let $\vphi = (\sig_1,\dots, \sig_{d-1}, \on{id}_{\bK})$ be an ordering of the different embeddings of $\bK$ into $\bR$ so that the last embedding is chosen to be the identity. Using similar notation as in~\eqref{eq:2301}, consider the homothety class $x_{\vec{\al}_n}$ of the lattice in $\bR^d$ spanned by the rows of the matrix
$$g_{\vec{\al}_n} =
%= \mat{- &\vphi(\al_{n,1})&-\\ &\vdots&\\ - &\vphi(\al_{n,d-1})&- \\ -&\vphi(1)&-} =
 \mat{- &m^{i_{n,1}}\vphi(\al_1)&-\\ &\vdots&\\ - &m^{i_{n,d-1}}\vphi(\al_{d-1})&- \\ - &\vphi(1)&-}.
%= \diag\pa{m^{i_{n,1}},\dots m^{i_{n,d-1}},1} g_{\vec{\al}}.
$$
Theorem~\ref{th110} tells us that for any choice of real matrices $s_n$, the periodic orbits $s_nAx_{\vec{\al}_n}$ equidistribute in
the space of homothety classes of lattices in $\bR^d$ with respect to the uniform measure on that space. 
We can now apply \cite[Theorem 14.2]{SWbest}, which says that in this case, 
\begin{equation}\label{eq:com2}
\nu^{\vec{\al}_n}_{\tb{best}} \longrightarrow \nu_{\tb{best}}.
\end{equation}
In fact, \cite[Theorem 14.2]{SWbest} only needs as an input the fact that the orbits $s_nAx_{\vec{\al}_n}$ equidistribute for a specific sequence of invertible real matricces $s_n$ but since we know this for any such choice, there is no need to dwell on the particular formula of $s_n$ that appears there.
\end{proof}

\bibliographystyle{amsalpha}
%\bibliography{bibliography}

% \bib, bibdiv, biblist are defined by the amsrefs package.
\begin{bibdiv}
\begin{biblist}

\bib{AS}{article}{
      author={Aka, Menny},
      author={Shapira, Uri},
       title={On the evolution of continued fractions in a fixed quadratic
  field},
        date={2018},
        ISSN={0021-7670},
     journal={J. Anal. Math.},
      volume={134},
      number={1},
       pages={335\ndash 397},
         url={https://doi.org/10.1007/s11854-018-0012-4},
      review={\MR{3771486}},
}

\bib{B91}{book}{
      author={Borel, Armand},
       title={Linear algebraic groups},
     edition={Second},
      series={Graduate Texts in Mathematics},
   publisher={Springer-Verlag, New York},
        date={1991},
      volume={126},
        ISBN={0-387-97370-2},
         url={http://dx.doi.org/10.1007/978-1-4612-0941-6},
      review={\MR{1102012}},
}

\bib{BT65}{article}{
      author={Borel, Armand},
      author={Tits, Jacques},
       title={Groupes r\'eductifs},
        date={1965},
        ISSN={0073-8301},
     journal={Inst. Hautes \'Etudes Sci. Publ. Math.},
      number={27},
       pages={55\ndash 150},
         url={http://www.numdam.org/item?id=PMIHES_1965__27__55_0},
      review={\MR{0207712}},
}

\bib{DS18}{article}{
      author={David, Ofir},
      author={Shapira, Uri},
       title={Equidistribution of divergent orbits and continued fraction
  expansion of rationals},
        date={2018},
        ISSN={0024-6107},
     journal={J. Lond. Math. Soc. (2)},
      volume={98},
      number={1},
       pages={149\ndash 176},
         url={https://doi.org/10.1112/jlms.12128},
      review={\MR{3847236}},
}

\bib{DS18b}{article}{
      author={David, Ofir},
      author={Shapira, Uri},
       title={Equidistribution of divergent orbits of the diagonal group in the
  space of lattices},
        date={2018},
     journal={Ergodic Theory Dynam. Systems},
         url={https://doi.org/10.1017/etds.2018.80},
        note={\url{https://doi.org/10.1017/etds.2018.80}},
}

\bib{ELMV09}{article}{
      author={Einsiedler, Manfred},
      author={Lindenstrauss, Elon},
      author={Michel, Philippe},
      author={Venkatesh, Akshay},
       title={Distribution of periodic torus orbits on homogeneous spaces},
        date={2009},
        ISSN={0012-7094},
     journal={Duke Math. J.},
      volume={148},
      number={1},
       pages={119\ndash 174},
         url={https://doi.org/10.1215/00127094-2009-023},
      review={\MR{2515103}},
}

\bib{ELMV}{article}{
      author={Einsiedler, Manfred},
      author={Lindenstrauss, Elon},
      author={Michel, Philippe},
      author={Venkatesh, Akshay},
       title={Distribution of periodic torus orbits and {D}uke's theorem for
  cubic fields},
        date={2011},
        ISSN={0003-486X},
     journal={Ann. of Math. (2)},
      volume={173},
      number={2},
       pages={815\ndash 885},
         url={http://dx.doi.org/10.4007/annals.2011.173.2.5},
      review={\MR{2776363}},
}

\bib{EMS96}{article}{
      author={Eskin, Alex},
      author={Mozes, Shahar},
      author={Shah, Nimish},
       title={Unipotent flows and counting lattice points on homogeneous
  varieties},
        date={1996},
        ISSN={0003-486X},
     journal={Ann. of Math. (2)},
      volume={143},
      number={2},
       pages={253\ndash 299},
         url={http://dx.doi.org/10.2307/2118644},
      review={\MR{1381987}},
}

\bib{EMS97}{article}{
      author={Eskin, A.},
      author={Mozes, S.},
      author={Shah, N.},
       title={Non-divergence of translates of certain algebraic measures},
        date={1997},
        ISSN={1016-443X},
     journal={Geom. Funct. Anal.},
      volume={7},
      number={1},
       pages={48\ndash 80},
         url={http://dx.doi.org/10.1007/PL00001616},
      review={\MR{1437473}},
}

\bib{EMS98}{unpublished}{
      author={Eskin, Alex},
      author={Mozes, Shahar},
      author={Shah, Nimish},
       title={Correction to ``unipotent flows and counting lattice points on
  homogeneous varieties},
        date={1998},
        note={\url{https://people.math.osu.edu/shah.595/}},
}

\bib{GO11}{article}{
      author={Gorodnik, Alex},
      author={Oh, Hee},
       title={Rational points on homogeneous varieties and equidistribution of
  adelic periods},
        date={2011},
        ISSN={1016-443X},
     journal={Geom. Funct. Anal.},
      volume={21},
      number={2},
       pages={319\ndash 392},
         url={http://dx.doi.org/10.1007/s00039-011-0113-z},
        note={With an appendix by Mikhail Borovoi},
      review={\MR{2795511}},
}

\bib{Kh19}{article}{
      author={Khayutin, Ilya},
       title={Joint equidistribution of {CM} points},
        date={2019},
        ISSN={0003-486X},
     journal={Ann. of Math. (2)},
      volume={189},
      number={1},
       pages={145\ndash 276},
         url={https://doi.org/10.4007/annals.2019.189.1.4},
      review={\MR{3898709}},
}

\bib{Kle10}{incollection}{
      author={Kleinbock, Dmitry},
       title={Quantitative nondivergence and its {D}iophantine applications},
        date={2010},
   booktitle={Homogeneous flows, moduli spaces and arithmetic},
      series={Clay Math. Proc.},
      volume={10},
   publisher={Amer. Math. Soc., Providence, RI},
       pages={131\ndash 153},
      review={\MR{2648694}},
}

\bib{KM98}{article}{
      author={Kleinbock, D.~Y.},
      author={Margulis, G.~A.},
       title={Flows on homogeneous spaces and {D}iophantine approximation on
  manifolds},
        date={1998},
        ISSN={0003-486X},
     journal={Ann. of Math. (2)},
      volume={148},
      number={1},
       pages={339\ndash 360},
         url={http://dx.doi.org/10.2307/120997},
      review={\MR{1652916}},
}

\bib{KT07}{article}{
      author={Kleinbock, Dmitry},
      author={Tomanov, George},
       title={Flows on {$S$}-arithmetic homogeneous spaces and applications to
  metric {D}iophantine approximation},
        date={2007},
        ISSN={0010-2571},
     journal={Comment. Math. Helv.},
      volume={82},
      number={3},
       pages={519\ndash 581},
         url={http://dx.doi.org/10.4171/CMH/102},
      review={\MR{2314053}},
}

\bib{LW01}{article}{
      author={Lindenstrauss, Elon},
      author={Weiss, Barak},
       title={On sets invariant under the action of the diagonal group},
        date={2001},
        ISSN={0143-3857},
     journal={Ergodic Theory Dynam. Systems},
      volume={21},
      number={5},
       pages={1481\ndash 1500},
         url={http://dx.doi.org/10.1017/S0143385701001717},
      review={\MR{1855843}},
}

\bib{MS95}{article}{
      author={Mozes, Shahar},
      author={Shah, Nimish},
       title={On the space of ergodic invariant measures of unipotent flows},
        date={1995},
        ISSN={0143-3857},
     journal={Ergodic Theory Dynam. Systems},
      volume={15},
      number={1},
       pages={149\ndash 159},
         url={http://dx.doi.org/10.1017/S0143385700008282},
      review={\MR{1314973}},
}

\bib{MT94}{article}{
      author={Margulis, G.~A.},
      author={Tomanov, G.~M.},
       title={Invariant measures for actions of unipotent groups over local
  fields on homogeneous spaces},
        date={1994},
        ISSN={0020-9910},
     journal={Invent. Math.},
      volume={116},
      number={1-3},
       pages={347\ndash 392},
         url={https://doi.org/10.1007/BF01231565},
      review={\MR{1253197}},
}

\bib{MT96}{article}{
      author={Margulis, G.~A.},
      author={Tomanov, G.~M.},
       title={Measure rigidity for almost linear groups and its applications},
        date={1996},
        ISSN={0021-7670},
     journal={J. Anal. Math.},
      volume={69},
       pages={25\ndash 54},
         url={https://doi.org/10.1007/BF02787100},
      review={\MR{1428093}},
}

\bib{PR}{article}{
      author={Prasad, Gopal},
      author={Raghunathan, M.~S.},
       title={Cartan subgroups and lattices in semi-simple groups},
        date={1972},
        ISSN={0003-486X},
     journal={Ann. of Math. (2)},
      volume={96},
       pages={296\ndash 317},
         url={https://doi.org/10.2307/1970790},
      review={\MR{302822}},
}

\bib{PR94}{book}{
      author={Platonov, Vladimir},
      author={Rapinchuk, Andrei},
       title={Algebraic groups and number theory},
      series={Pure and Applied Mathematics},
   publisher={Academic Press, Inc., Boston, MA},
        date={1994},
      volume={139},
        ISBN={0-12-558180-7},
        note={Translated from the 1991 Russian original by Rachel Rowen},
      review={\MR{1278263}},
}

\bib{R95}{article}{
      author={Ratner, Marina},
       title={Raghunathan's conjectures for {C}artesian products of real and
  {$p$}-adic {L}ie groups},
        date={1995},
        ISSN={0012-7094},
     journal={Duke Math. J.},
      volume={77},
      number={2},
       pages={275\ndash 382},
         url={https://doi.org/10.1215/S0012-7094-95-07710-2},
      review={\MR{1321062}},
}

\bib{R98}{incollection}{
      author={Ratner, Marina},
       title={On the {$p$}-adic and {$S$}-arithmetic generalizations of
  {R}aghunathan's conjectures},
        date={1998},
   booktitle={Lie groups and ergodic theory ({M}umbai, 1996)},
      series={Tata Inst. Fund. Res. Stud. Math.},
      volume={14},
   publisher={Tata Inst. Fund. Res., Bombay},
       pages={167\ndash 202},
      review={\MR{1699365}},
}

\bib{RZ16}{unpublished}{
      author={Richard, Rodolphe},
      author={Zamojski, Thomas},
       title={Limiting distribution of translated pieces of possibly irrational
  leaves in s-arithmetic homogeneous spaces},
        date={2016},
        note={\url{https://arxiv.org/abs/1604.08494}},
}

\bib{SWbest}{unpublished}{
      author={Shapira, Uri},
      author={Weiss, Barak},
       title={Geometric and arithmetic aspects of best approximations},
        note={\url{https://arxiv.org/abs/2206.05329}},
}

\bib{Tom00}{incollection}{
      author={Tomanov, George},
       title={Orbits on homogeneous spaces of arithmetic origin and
  approximations},
        date={2000},
   booktitle={Analysis on homogeneous spaces and representation theory of {L}ie
  groups, {O}kayama--{K}yoto (1997)},
      series={Adv. Stud. Pure Math.},
      volume={26},
   publisher={Math. Soc. Japan, Tokyo},
       pages={265\ndash 297},
      review={\MR{1770724}},
}

\bib{TW03}{article}{
      author={Tomanov, George},
      author={Weiss, Barak},
       title={Closed orbits for actions of maximal tori on homogeneous spaces},
        date={2003},
        ISSN={0012-7094},
     journal={Duke Math. J.},
      volume={119},
      number={2},
       pages={367\ndash 392},
         url={http://dx.doi.org/10.1215/S0012-7094-03-11926-2},
      review={\MR{1997950}},
}

\bib{V10}{article}{
      author={Venkatesh, Akshay},
       title={Sparse equidistribution problems, period bounds and
  subconvexity},
        date={2010},
        ISSN={0003-486X},
     journal={Ann. of Math. (2)},
      volume={172},
      number={2},
       pages={989\ndash 1094},
         url={https://doi.org/10.4007/annals.2010.172.989},
      review={\MR{2680486}},
}

\bib{Z10}{book}{
      author={Zamojski, Thomas},
       title={Counting rational matrices of a fixed irreducible characteristic
  polynomial},
   publisher={ProQuest LLC, Ann Arbor, MI},
        date={2010},
        ISBN={978-1124-04954-0},
  url={http://gateway.proquest.com/openurl?url_ver=Z39.88-2004&rft_val_fmt=info:ofi/fmt:kev:mtx:dissertation&res_dat=xri:pqdiss&rft_dat=xri:pqdiss:3408614},
        note={Thesis (Ph.D.)--The University of Chicago},
      review={\MR{2941378}},
}

\end{biblist}
\end{bibdiv}

\end{document}